\newcommand{\PP}{{\mathbb{P}}}
\newcommand{\ga}{\alpha}
\newcommand{\gk}{\kappa}
\newcommand{\gt}{\tau}
\newcommand{\func }{\mathord{:}}
\newcommand{\restricted}{\mathord{\restriction}}
\newcommand{\ordered}[1]{\ensuremath{\langle #1 \rangle}}
\newcommand{\ordof}[2]{\ensuremath{\ordered{ #1 \mid #2 }}}
\DeclareMathOperator{\len}{l}
\DeclareMathOperator{\crit}{crit}
\DeclareMathOperator{\dom}{dom}
\DeclareMathOperator{\Col}{Col}
\DeclareMathOperator{\Add}{Add}
\DeclareMathOperator{\C}{C}
\DeclareMathOperator{\supp}{supp}
\DeclareMathOperator{\Ult}{Ult}
\DeclareMathOperator{\limdir}{lim\ dir}
\DeclareMathOperator{\mc}{mc}
\newcommand{\Es}{{\ensuremath{\bar{E}}\/}}
\newcommand{\VS}{V^*}
\newcommand{\MS}{{M^*}}
\newcommand{\NS}{{N^*}}
\newcommand{\MSt}{{M^*_\gt}}
\newcommand{\Mt}{{M_\gt}}
\newcommand{\ME}{{M_{\Es}}}
\newcommand{\MSE}{{M^*_{\Es}}}
\def\MQB{{\mathbb{Q}}}
\def\MRB{{\mathbb{R}}}
\def\k{\kappa}
\def\l{\lambda}
\newtheorem{theorem}{Theorem}[section]
\newtheorem{lemma}[theorem]{Lemma}
\newtheorem{corollary}[theorem]{Corollary}
\newtheorem{definition}[theorem]{Definition}
\newtheorem{remark}[theorem]{Remark}
\newtheorem{claim}[theorem]{Claim}
\newtheorem{question}[theorem]{Question}
\numberwithin{equation}{section}
\def\l{\lambda}
\def\rmark{\mbox{$\rm\bf\rule{0.06em}{1.45ex}\kern-0.05em R$}}
\def\pmark{\mbox{$\rm\bf\rule{0.06em}{1.45ex}\kern-0.05em P$}}
\def\nmark{\mbox{$\rm\bf\rule{0.06em}{1.45ex}\kern-0.05em N$}}
\def\vdash{\mbox{$\rm\| \kern-0.13em -$}}
\def\l{\lambda}
\def\rmark{\mbox{$\rm\bf\rule{0.06em}{1.45ex}\kern-0.05em R$}}
\def\pmark{\mbox{$\rm\bf\rule{0.06em}{1.45ex}\kern-0.05em P$}}
\def\nmark{\mbox{$\rm\bf\rule{0.06em}{1.45ex}\kern-0.05em N$}}
\def\vdash{\mbox{$\rm\| \kern-0.13em -$}}
\newcommand{\lusim}[1]{\smash{\underset{\raisebox{1.2pt}[0cm][0cm]{$\sim$}}
{{#1}}}}
\begin{document}

\title[$\text{HOD}$,~ $\text{V}$ and the $\text{GCH}$ ]{$\text{HOD}$,~ $\text{V}$ and the $\text{GCH}$}

\author[ M. Golshani.]{Mohammad Golshani}

\thanks{The author's research has been supported by a grant from IPM (No. 91030417).}
\maketitle

\begin{abstract}
Starting from large cardinals we construct a model of $ZFC$ in which the $GCH$ fails
everywhere, but such that $GCH$ holds in its $HOD$.  The result answers a question of Sy Friedman. Also, relative to the existence of
large cardinals, we produce a model of $ZFC+GCH$ such that $GCH$ fails everywhere in its $HOD$.
\end{abstract}
\maketitle

\section{Introduction}
In personal communication \cite{friedman}, Sy Friedman asked the author if we can have a model of $ZFC$ in which $GCH$ fails everywhere, but its $HOD$ satisfies the $GCH$. We
give an affirmative answer to his question by showing that the model of \cite{friedman-golshani} satisfies the required properties. To be more precise,
we prove the following

\begin{theorem}
Assume $V \models$``$ZFC+GCH+$there exists a $(\kappa+4)-$strong cardinal $\kappa$''. Then there is a generic extension $W$ of $V$ such that:

$(1)$ $\kappa$ remains inaccessible in $W$,

$(2)$ $V_\kappa^W=W_\kappa \models$``$~ZFC+ \forall \lambda, 2^{\lambda} = \lambda^{+3}$'',

$(3)$ $HOD^{W_\kappa} \models$``$GCH$''.

\end{theorem}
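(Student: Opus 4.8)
The plan is to let $W=V[G]$ with $G$ generic for the Friedman--Golshani extender-based forcing $\PP$ (driven by the sequence $\Es$) of the cited construction, so that clauses $(1)$ and $(2)$ hold verbatim by that theorem; essentially all of the work is in clause $(3)$. As a harmless first move I would arrange $V\models V=HOD$, so that $V=HOD^{V}\models GCH$ and, crucially, $\Es$ and hence the whole forcing $\PP$ become ordinal definable; this can be secured by a preliminary $GCH$- and strongness-preserving forcing (or is automatic if the hypotheses are taken in a fine-structural model). The point of this reduction is that the $HOD$ computation of $W_\kappa$ will rest on $\PP$ being an ordinal-definable notion of forcing.

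The core of the argument is a homogeneity analysis of $\PP$. Extender-based forcings are not literally weakly homogeneous, but I would show that $\PP$ is \emph{cone homogeneous}: any two conditions sharing the same lower part are matched by an automorphism of the corresponding cone, and these automorphisms merely permute the ``Cohen'' coordinates that blow $2^\lambda$ up to $\lambda^{+3}$ while fixing the Prikry/cofinality skeleton. Granting this, the usual homogeneity argument applies to sets of ordinals: if $A\in W_\kappa$ is definable in $W_\kappa$ from ordinal parameters $\bar\beta$, then for each $\alpha$ the statement $\alpha\in A$ is decided by an ordinal-definable condition, so $A=\{\alpha: p\Vdash\check\alpha\in\dot A\}$ is ordinal definable in $V$. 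Since $V=HOD$, this gives $A\in V$, and hence $HOD^{W_\kappa}\subseteq V_\kappa$. In particular none of the generic subsets witnessing $2^\lambda=\lambda^{+3}$ is ordinal definable in $W_\kappa$, so the power-set blow-up is invisible to $HOD$.

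To finish I would identify $HOD^{W_\kappa}$ more precisely. The cone homogeneity shows that the only ordinal-definable part of $G$ is its Prikry/cofinality skeleton; thus $HOD^{W_\kappa}$ is (an inner model obtained from) $V_\kappa$ by the mild, Prikry-type forcing that adds just this skeleton, with all the $\lambda^{+3}$-many Cohen subsets quotiented away. Because a Prikry-type forcing adds no bounded subsets and preserves cardinals and the $GCH$, and because $V_\kappa\models GCH$, it follows that $HOD^{W_\kappa}\models GCH$, as required. (In the cleanest case $\PP$ is fully weakly homogeneous, the skeleton is trivial, and one simply gets $HOD^{W_\kappa}=V_\kappa\models GCH$.)

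The main obstacle is precisely the homogeneity step. One must isolate the correct cone-homogeneity of the extender-based forcing, verify that the \emph{only} ordinal-definable content of the generic is the ground-model-like skeleton, and confirm that the $\lambda^{+3}$ generic subsets of each $\lambda$ are genuinely shuffled by the relevant automorphisms; any leakage of a definable new subset into $HOD^{W_\kappa}$ would destroy $GCH$ there. I expect this is exactly where the $(\kappa+4)$-strength (rather than $(\kappa+3)$) is spent: the extra degree of strength provides the room to make the blow-up component sufficiently homogeneous while confining the definable skeleton to a $GCH$-preserving, Prikry-type extension of $V_\kappa$.
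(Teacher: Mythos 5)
Your overall shape --- isolate a ``skeleton'' sub-extension that preserves cardinals and the $GCH$, and show by a homogeneity argument that $HOD$ of the full extension lives inside it --- is indeed the strategy of the paper: there the skeleton forcing is made explicit as $\mathbb{Q}_{\bar{E}}$ (adding the sequence of extender sequences $\langle \bar{\mu}_\alpha : \alpha<\kappa\rangle$, not just the club), a Prikry-type projection $\Phi:\mathbb{P}_{\bar{E}}\to\mathbb{Q}_{\bar{E}}$ is constructed, and the homogeneity is proved for the quotient $\mathbb{P}_{\bar{E}}/H^*$ by matching conditions with the same $\Phi$-image after shrinking their trees. But your write-up has a genuine gap at the final step. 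Knowing $HOD^{W_\kappa}\subseteq N$ for some $GCH$-model $N$ does not give $GCH$ in $HOD^{W_\kappa}$: $GCH$ does not pass down to inner models without control of cardinals from below. Your device for the lower bound --- first arranging $V=HOD^V$ --- does not deliver it: ground-model definability gives $V$ definable in $V[H]$ only from a \emph{set} parameter, so $V=HOD^V$ does not yield $V\subseteq HOD^{V[H]}$. The paper instead takes $V^*$ to be the canonical core model, which is contained in $HOD$ of every set-generic extension, and then sandwiches $V^*\subseteq HOD^{V[H][K]}\subseteq V[H^*]$, where the two ends have the same cardinals and both satisfy $GCH$; only this sandwich forces $GCH$ in the middle. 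Without some such lower bound your argument proves at most $HOD^{W_\kappa}\subseteq N$ and stops short of clause $(3)$.

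Two further points need more than the gesture you give them. First, ``Prikry-type'' alone does not imply cardinal or $GCH$ preservation for the skeleton forcing; one must actually prove that $\mathbb{Q}_{\bar{E}}$ is $\kappa^+$-c.c., factors below each point of the club with the right closure, and computes $2^\kappa=\kappa^+$ (the paper's Lemmas 3.30--3.36, including a counting of $P(\kappa)$ via restrictions $\mathbb{Q}_{\bar{E}}\restriction s$). Second, the homogeneity you need is not ``automorphisms of cones over conditions with the same lower part'' but isomorphisms between cones over $p$ and $q$ with $\Phi(p)=\Phi(q)$, and these exist only after passing to direct extensions $p'\leq^* p$, $q'\leq^* q$ and using the homogeneity of the guiding-generic quotients $R(-,-)/f^p(\nu)$; the definable content of $H$ is then everything $\mathbb{Q}_{\bar{E}}$ adds (extender sequences attached to club points, which change the cardinal structure), so your ``cleanest case'' $HOD^{W_\kappa}=V_\kappa$ is impossible, as the paper notes when observing that $W_\kappa$ and $HOD^{W_\kappa}$ necessarily have different cardinals.
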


\begin{remark}
In fact it suffices to have a Mitchell increasing $\kappa^+$-sequence of extenders, each of which is $(\kappa+3)$-strong. Thus for the conclusion of Theorem 1.1 it suffices to have a cardinal $\kappa$ such that $o(\kappa) = \kappa^{+3} + \kappa^+$.
\end{remark}
The model $W$ we consider for the proof of Theorem 1.1 is the model $V^{\mathbb{P}_{\bar{E}}\times Col(\omega, \lambda)}$ of \cite{friedman-golshani}, but to show that its $HOD$ satisfies
the $GCH$ we need an analysis completely  different from \cite{friedman-golshani}.

Let us describe the main differences of the proof of Theorem 1.1 with that of \cite{friedman-golshani}. In \cite{friedman-golshani},  two forcing notions $\mathbb{P}_{\bar{E}}$
and $\mathbb{R}_{\bar{E}_\kappa}$ were defined such that:
\begin{enumerate}
\item $V_2=V_\kappa^{\mathbb{P}_{\bar{E}}\times Col(\omega, \lambda)} \models$``$~ \forall \eta, 2^{\eta} = \eta^{+3}$'', where
$\lambda$ is the minimal element of the Radin club added by $\mathbb{P}_{\bar{E}},$
\item $V_1=V_\kappa^{\mathbb{R}_{\bar{E}_\kappa}\times Col(\omega, \lambda)} \models$``$GCH$'',
\item $V_1 \subseteq V_2,$ as proved by finding a projection $\pi: \mathbb{P}_{\bar{E}} \rightarrow \mathbb{R}_{\bar{E}_\kappa},$
\item $V_1$ and  $V_2$ have the same cofinalities.
\end{enumerate}
As the guiding generics are coming from some homogeneous forcing notions (i.e., Cohen forcings and collapse forcings),  $HOD^{V_2}$  and
$V_1$ have different cardinal structure, and even if we can show that  $HOD^{V_2} \subseteq V_1$, there is no guarantee that $GCH$ holds
in $HOD^{V_2}$.

So, to prove the theorem, we build an inner model $N$ of $V^{\mathbb{P}_{\bar{E}}\times Col(\omega, \lambda)}$ which is different from $V^{\mathbb{R}_{\bar{E}_\kappa}\times Col(\omega, \lambda)}$. In fact we will show that there exists
 a cardinal
and $GCH$ preserving
generic extension $N$ of $V$ such that $HOD^{V^{\mathbb{P}_{\bar{E}}\times Col(\omega, \lambda)}} \subseteq N,$ and using it we conclude the result.

On the other hand, by a result of Roguski \cite{roguski}, any model $V$ of $ZFC$ has a class generic extension $V[G]$ such that $V$ is equal to the $HOD$ of
$V[G].$ Some generalizations of Roguski result are obtained by Fuchs-Hamkins and Reitz \cite{f-h-r}. These results can be used to find a model $W$
of $ZFC$ such that $GCH$ fails everywhere in its $HOD$. However in the constructions of \cite{roguski} and \cite{f-h-r}, the class generic extension
$V[G]$ fails to satisfy $GCH$. We modify the above constructions so that our final model satisfies the $GCH$, and use it to prove the following
theorem.
\begin{theorem}
Assume  $V\models$``$ZFC+GCH+ \kappa$ is a $(\kappa+4)$-strong cardinal. Then there exists a generic extension $W$ of $V$ such that

$(1)$ $\kappa$ remains inaccessible in $W$,

$(2)$ $V_\kappa^W=W_\kappa \models$``$~ZFC+GCH$'',

$(3)$ $HOD^{W_\kappa} \models$``$\forall \lambda, 2^{\lambda} = \lambda^{+3}$''.

\end{theorem}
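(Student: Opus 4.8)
The plan is to obtain $W$ by taking a carefully prepared inner model $M$ that fails the $GCH$ everywhere and forcing it to become the $HOD$ of a larger model in which the $GCH$ nevertheless holds. The guiding principle is Roguski's theorem \cite{roguski}, that any $M \models ZFC$ can be rendered the $HOD$ of a generic extension by coding $M$ into a definable feature of the extension; the point where we must depart from \cite{roguski} and \cite{f-h-r} is that their coding is carried out in the continuum function and so destroys the $GCH$, whereas we need $W_\kappa \models GCH$ while $HOD^{W_\kappa}=M \models$``$\forall\lambda,\ 2^\lambda=\lambda^{+3}$''. So the entire difficulty is to design a coding that is simultaneously $GCH$-preserving in $W$ and faithful enough to recover $M$.

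First I would manufacture the intended $HOD$. Using the $(\kappa+4)$-strong cardinal (equivalently the Mitchell-order hypothesis of the Remark), force with a variant of the extender forcing $\mathbb{P}_{\bar{E}}$ of \cite{friedman-golshani} to obtain a cardinal-preserving extension $M$ in which $\kappa$ remains inaccessible and $M_\kappa \models$``$\forall\lambda,\ 2^{\lambda}=\lambda^{+3}$''. This $M$ is exactly the model we intend to see as $HOD^{W_\kappa}$; the extender structure is what lets us impose the pattern $2^{\mu}=\mu^{+3}$ uniformly at every cardinal $\mu<\kappa$ and, at the same time, gives a definable handle on $M$ that will be exploited in the coding phase.

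Next I would code $M$ into $W$ using only weakly homogeneous, $GCH$-preserving forcing. Let $\mathbb{Q}$ be an Easton-support iteration whose stage at a cardinal $\lambda$ is the Levy collapse $\Col(\lambda,(\lambda^{++})^{M})$, and set $W=M[G]$ with $G$ generic for $\mathbb{Q}$. Each such collapse is weakly homogeneous and, because $M \models 2^{\lambda}=\lambda^{+3}$, a nice-names count gives $(2^{\lambda})^{W}\le ((\lambda^{++})^{M})^{\lambda}=(\lambda^{+3})^{M}$, while the collapse merges $(\lambda^{+})^{M},(\lambda^{++})^{M}$ down to $\lambda$ so that $(\lambda^{+3})^{M}=(\lambda^{+})^{W}$; hence $(2^{\lambda})^{W}=(\lambda^{+})^{W}$ and $W_\kappa \models GCH$, the $W$-successor cardinals being every third cardinal of $M$. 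The weak homogeneity of $\mathbb{Q}$ forces $HOD^{W_\kappa}\subseteq M$, while the definable trace of the collapses (which cardinals of $M$ survive in $W$, and in what $3$-block pattern) is meant to let $HOD^{W_\kappa}$ decode the $\in$-diagram of $M$, yielding $M\subseteq HOD^{W_\kappa}$ and so $HOD^{W_\kappa}=M$. With this in hand, conditions $(1)$--$(3)$ follow: $\kappa$ stays inaccessible since $\mathbb{P}_{\bar{E}}$ and $\mathbb{Q}$ are appropriately small and $\kappa$-c.c.\ below $\kappa$; $W_\kappa\models ZFC+GCH$ by the collapse bookkeeping just indicated; and $HOD^{W_\kappa}=M\models$``$\forall\lambda,\ 2^{\lambda}=\lambda^{+3}$''.

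The step I expect to be the main obstacle is the last one: arranging a single homogeneous, $GCH$-preserving collapse iteration whose definable trace nonetheless recovers all of $M$. In the usual Roguski/Fuchs--Hamkins--Reitz coding the free bits come from choosing $2^{\lambda}\in\{\lambda^{+},\lambda^{++}\}$ or from the presence of auxiliary cardinals at which the $GCH$ holds; but here every cardinal of $M=HOD^{W_\kappa}$ must satisfy $2^{\mu}=\mu^{+3}$, so no $GCH$-point of $M$ may be used as a coding slot, and the continuum function of $M$ is frozen. Thus the coding must be pushed entirely into the relative type and position of the collapses among the $\lambda^{+3}$-blocks (or into a sparse class of extra cardinals that are themselves collapsed out of $W$ yet remain $\neg GCH$-cardinals of $M$), in such a way that decoding is possible inside $HOD^{W_\kappa}$ without the added collapsing maps --- which are new subsets of $\lambda$ lying in $W\setminus M$ --- either spoiling the count $(2^{\lambda})^{W}=(\lambda^{+})^{W}$ or injecting sets outside $M$ into $HOD^{W_\kappa}$. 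Reconciling coding faithfulness, weak homogeneity, $GCH$ in $W$, and the everywhere-failure of $GCH$ in $HOD$ is the crux, and it is here that the extender preparation of $M$ and the full $(\kappa+4)$-strength are genuinely needed.
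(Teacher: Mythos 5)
Your global strategy coincides with the paper's: first use the extender forcing of \cite{friedman-golshani} to produce a model $M$ in which $2^\lambda=\lambda^{+3}$ holds at every $\lambda<\kappa$, and then realize $M$ as the $HOD$ of a further, $GCH$-preserving, weakly homogeneous extension $W$ in the style of Roguski. You also correctly isolate the crux, namely that the coding of $M$ into $W$ cannot live in the continuum function of $W$. But you do not resolve that crux, and the mechanism you propose in its place does not work. A uniform Easton iteration of collapses $\Col(\lambda,(\lambda^{++})^{M})$ carries no information about $M$ beyond its cardinal arithmetic: the ``definable trace'' you invoke --- which $M$-cardinals survive into $W$ and in what $3$-block pattern --- is the same for every model with the continuum function $2^\lambda=\lambda^{+3}$, so it cannot let $HOD^{W_\kappa}$ recover the $\in$-diagram of $M$. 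What is actually needed is a class of ordinals $K$ with $M=L[K]$ together with a definable-in-$W$ switch, settable independently at class-many coordinates, each of whose two settings is weakly homogeneous \emph{and} $GCH$-preserving; your final paragraph concedes that you do not have such a switch. (You also omit the necessary scaffolding: one must first force a global well-order of $M$ by set-closed forcing so that $M=HOD(U)$ relative to a predicate, and then pass to a model of $ZFC(U_1,U_2)+GCH$ in which $M=L[K]$ for a definable class $K$; without this there is no $K$ to code and no route to $M\subseteq HOD^{W}$.)

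The paper's solution to exactly this obstacle is to move the coding from the continuum function to the $\diamondsuit^*$ pattern. After forcing global choice (Step 1) and then the canonical reverse Easton iteration for $GCH$ (Step 2), one has $M=L[K]$ for a suitably definable class $K$, and one forces with the Easton product whose $\alpha$-th factor is $\Add(\diamondsuit^*_{\aleph_{\alpha+1}})$ if $\alpha\in K$ and $\Add(\aleph_{\alpha+1},\aleph_{\alpha+2})$ if $\alpha\notin K$. Both factors are weakly homogeneous and preserve $GCH$, the first forces $\diamondsuit^*_{\aleph_{\alpha+1}}$ and the second kills it, so $K=\{\alpha:\diamondsuit^*_{\aleph_{\alpha+1}}\text{ holds}\}$ is lightface definable in $W$, giving $M=L[K]\subseteq HOD^{W}$, while weak homogeneity gives $HOD^{W}\subseteq M$. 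This is the missing idea in your proposal: a two-valued, $GCH$-compatible, homogeneous coding oracle. Your collapse iteration, by contrast, either codes nothing (if uniform) or is left unspecified (if the ``relative type and position of the collapses'' is meant to vary with $K$), and in the latter case you would still have to verify that the chosen non-uniform collapse pattern is decodable in $W$, preserves $GCH$ at every $W$-cardinal, and does not inject the collapsing functions into $HOD^{W}$ --- none of which is addressed.
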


In section 2 we present some preliminaries about projection between forcing notions and produce a new kind of projection which plays an essential role
in later sections of the paper. In section 3 we give the proof of Theorem 1.1 and in section 4 we complete the proof of Theorem 1.3. In the last section 5
we discuss some possible generalizations.

\section{Prikry type projections}
In this section, we present some definitions and results which appear in the following sections.
Let's start with the definition of a projection map between forcing notions.
\begin{definition}
Let $\PP, \MQB$ be two forcing notions. $\pi$ is a  projection from $\PP$ into $\MQB$ if $\pi: \PP \rightarrow \MQB,$ and it satisfies the following conditions:

$(1)$ $\pi(1_\PP)=1_{\MQB},$

$(2)$ $\pi$ is order preserving; i.e., $p \leq_\PP q \Rightarrow \pi(p) \leq_\MQB \pi(q),$

$(3)$ If $p\in \PP, q\in \MQB$ and $q \leq_\MQB \pi(p)$, then there exists $p^* \leq_\PP p$ such that $\pi(p^*) \leq_\MQB q.$
\end{definition}
If $\pi: \PP \rightarrow \MQB$ is a projection, then clearly $\pi[\PP]$ is dense in $\MQB.$ The next lemma shows that if $\PP$  projects into $\MQB,$ then a generic filter for $\PP$ yields  a generic filter for $\MQB.$



\begin{lemma}
Let $\pi: \PP \rightarrow \MQB$ be a  projection from $\PP$ into $\MQB$,  let $\text{G}$ be $\PP$-generic over $\text{V}$, and let $\text{H} \subseteq \MQB$ be the filter
generated by $\pi[\text{G}].$ Then $\text{H}$ is $\MQB$-generic over $\text{V}$ and $\text{V[H]} \subseteq \text{V[G]}.$
\end{lemma}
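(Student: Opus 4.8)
The plan is to verify three things in turn: that $H$ is genuinely a filter, that it lies in $V[G]$, and that it meets every dense subset of $\MQB$ that belongs to $V$. The first two are essentially bookkeeping, and all the content sits in the third, where condition (3) of the definition of a projection is the active ingredient.

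First I would unwind the definition of $H$. Since $G$ is a filter, any $p_1, p_2 \in G$ have a common extension $p \in G$; by order preservation (condition (2)) we get $\pi(p) \leq_\MQB \pi(p_1)$ and $\pi(p) \leq_\MQB \pi(p_2)$, so $\pi[G]$ is downward directed and the filter it generates is simply its upward closure
\[
H = \setof{q \in \MQB}{\exists p \in G,\ \pi(p) \leq_\MQB q}.
\]
This set is upward closed by construction, and the directedness just noted supplies, for any $q_1, q_2 \in H$, a common lower bound of the form $\pi(p) \in H$; hence $H$ is a filter on $\MQB$. The inclusion $V[H] \subseteq V[G]$ is then immediate, since the displayed definition refers only to $G$ and to the ground-model map $\pi$, so that $H \in V[G]$.

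For genericity, fix a dense set $D \subseteq \MQB$ with $D \in V$, and pull it back along $\pi$ to
\[
E = \setof{p \in \PP}{\exists q \in D,\ \pi(p) \leq_\MQB q} \in V.
\]
I claim $E$ is dense in $\PP$. Given any $p \in \PP$, density of $D$ yields $q \in D$ with $q \leq_\MQB \pi(p)$, and then condition (3) produces $p^* \leq_\PP p$ with $\pi(p^*) \leq_\MQB q$; this same $q$ witnesses $p^* \in E$, so $E$ is dense below every condition. Since $G$ is $\PP$-generic and $E \in V$, choose $p \in G \cap E$ together with a witnessing $q \in D$ satisfying $\pi(p) \leq_\MQB q$. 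As $p \in G$, the description of $H$ above gives $q \in H$, so $q \in H \cap D$. Because $D$ was an arbitrary dense subset of $\MQB$ in $V$, this shows $H$ is $\MQB$-generic over $V$.

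The only real obstacle is the density of $E$: one must recognize that condition (3) is precisely the lifting property converting density in $\MQB$ below $\pi(p)$ into density in $\PP$ below $p$. I would also note that it is convenient to phrase $E$ so that the witness $q$ is already an element of $D$, rather than merely below one; this keeps the witness in $D$ after applying condition (3) (whose output $\pi(p^*)$ need not itself lie in $D$) and thereby sidesteps any appeal to downward closures, predense sets, or maximal antichains.
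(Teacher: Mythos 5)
Your proof is correct. The paper states this lemma without proof, treating it as a standard fact about projections, and your argument is precisely the canonical one: upward closure of the directed set $\pi[G]$ gives the filter, definability of $H$ from $G$ and the ground-model map $\pi$ gives $V[H]\subseteq V[G]$, and the pullback $E$ of a dense set $D$ (made dense via condition (3) of Definition 2.1) gives genericity. Nothing is missing.
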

Prikry type forcing notions arise in our work.
\begin{definition}
$\langle \PP, \leq, \leq^* \rangle$ is of Prikry type, iff

$(1)$ $\leq^* \subseteq \leq,$

$(2)$ For any $p\in \PP$ and any statement $\sigma$ in the forcing language $\langle \PP, \leq \rangle$, there exists $q \leq^* p$

$\hspace{0.5cm}$ which decides $\sigma.$
\end{definition}
The relation $\leq^*$ is usually called the Prikry relation.
The following is well-known.
\begin{lemma}
Assume $\langle \PP, \leq, \leq^* \rangle$ is of Prikry type, and suppose $\langle \PP, \leq^* \rangle$ is $\k$-closed, where $\k$ is regular uncountable. Then Forcing with $\langle \PP, \leq \rangle$ does not add new bounded subsets to $\k.$
\end{lemma}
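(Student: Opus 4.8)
The plan is to prove the standard Prikry property consequence: if $\langle \PP, \leq, \leq^* \rangle$ is of Prikry type and $\langle \PP, \leq^* \rangle$ is $\k$-closed with $\k$ regular uncountable, then forcing with $\langle \PP, \leq \rangle$ adds no new bounded subsets of $\k$. First I would reduce the claim to a statement about functions: it suffices to show that for every $\eta < \k$ and every $\PP$-name $\dot{f}$ for a function from $\eta$ into the ordinals (or into $2$, which is all we need for subsets), and every condition $p \in \PP$, there is $q \leq^* p$ and a ground-model function $g : \eta \to \On$ such that $q \forces \dot{f} = \check{g}$. Since bounded subsets of $\k$ are coded by such functions, establishing this suffices.

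The key step is a fusion-type argument using $\k$-closure of $\leq^*$ together with the Prikry decision property. Starting from $p_0 = p$, I would build a $\leq^*$-decreasing sequence $\langle p_\alpha : \alpha < \eta \rangle$ as follows. Given $p_\alpha$, consider the statement $\sigma_\alpha$ asserting ``$\dot{f}(\alpha) = \xi$'' as $\xi$ ranges over the ordinals; by the Prikry property (Definition clause $(2)$) there is $q \leq^* p_\alpha$ deciding, for the relevant $\xi$, the value $\dot{f}(\alpha)$, so I set $p_{\alpha+1} \leq^* p_\alpha$ to be a condition that decides $\dot{f}(\alpha)$ — meaning $p_{\alpha+1} \forces \dot{f}(\alpha) = \check{\xi}_\alpha$ for some fixed ordinal $\xi_\alpha$. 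At limit stages $\beta \leq \eta$ I use that $\langle \PP, \leq^* \rangle$ is $\k$-closed and $\beta < \k$ to find a lower $\leq^*$-bound $p_\beta$ of $\langle p_\alpha : \alpha < \beta \rangle$; this is where $\k$-closure is essential and where regularity of $\k$ guarantees the construction of length $\eta < \k$ never stalls. Setting $q = p_\eta$, the condition $q$ satisfies $q \leq^* p \leq p$ (using $\leq^* \subseteq \leq$ from clause $(1)$) and $q \forces \dot{f}(\alpha) = \check{\xi}_\alpha$ for all $\alpha < \eta$. Defining $g(\alpha) = \xi_\alpha$ in the ground model, I get $q \forces \dot{f} = \check{g}$.

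To finish, I would translate this back to bounded subsets: suppose $G$ is $\langle \PP, \leq \rangle$-generic and $A \in V[G]$ with $A \subseteq \eta$ for some $\eta < \k$. Let $\dot{A}$ name $A$ and let $\dot{f}$ be its characteristic function name. Given any $p \in G$, the argument above yields $q \leq^* p$ and $g \in V$ with $q \forces \dot{f} = \check{g}$; by genericity the set of such $q$ is dense below conditions forcing $\dot f$ to be a function $\eta \to 2$, so some such $q$ lies in $G$, whence $A = \{\alpha < \eta \mid g(\alpha) = 1\} \in V$. This shows $A \in V$, so no new bounded subset is added.

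The main obstacle is ensuring the $\leq^*$-decreasing sequence closes up correctly at limit stages and that the final condition genuinely decides every value simultaneously; this is precisely what $\k$-closure of $\leq^*$ buys us, and the regularity and uncountability of $\k$ guarantee that sequences of length $\eta < \k$ admit lower bounds so the transfinite recursion of length up to $\eta$ goes through. A minor technical point to handle carefully is that the Prikry property as stated decides a single statement $\sigma$; to decide the \emph{value} $\dot{f}(\alpha)$ one should either phrase ``$\dot{f}(\alpha) = \check\xi$'' appropriately or invoke the property for each candidate $\xi$ and use that a condition deciding membership statements pins down a unique value, but since we only need subsets of $\eta$ (equivalently $\dot f(\alpha) \in \{0,1\}$) the single-statement version suffices directly by deciding ``$\check\alpha \in \dot A$''.
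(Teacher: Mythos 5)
Your proof is correct: the paper states this lemma without proof (``The following is well-known''), and your fusion argument --- building a $\leq^*$-decreasing sequence of length $\eta<\k$ deciding ``$\check\alpha\in\dot A$'' at successor stages, using $\k$-closure of $\leq^*$ at limits, and concluding by density --- is exactly the standard argument being invoked. You also correctly handle the only delicate point, namely that the Prikry property as stated decides single statements, which suffices here since membership statements are all that is needed for a bounded subset.
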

Projection between Prikry type forcing notions arises in our work in several places. So let's present a new definition, and give an application of it.
\begin{definition}
Let $\langle \PP, \leq_\PP, \leq^*_\PP \rangle$ and $\langle \MQB, \leq_\MQB, \leq^*_\MQB \rangle$ be two forcing notions with $\leq^*_\PP \subseteq \leq_\PP$ and $\leq^*_\MQB \subseteq \leq_\MQB$. A map $\pi: \PP \rightarrow \MQB$ is a ``projection of Prikry type'' iff

$(1)$ $\pi$ is a  projection from $\langle \PP, \leq_\PP \rangle$ into $\langle \MQB, \leq_\MQB \rangle$,

$(2)$ $\pi$ preserves the $\leq^*$-relation, i.e.,  $p \leq^*_\PP q \Rightarrow \pi(p) \leq^*_\MQB \pi(q),$

$(3)$ If $p\in \PP,$  $q\in \MQB$ and $q \leq_\MQB \pi(p)$, then there exists $p^* \leq_\PP p$ such that $\pi(p^*) \leq^*_\MQB q,$

\end{definition}
It is clear that if $\pi: \PP \rightarrow \MQB$ is a  projection of Prikry type from $\PP$ into $\MQB$, then $\pi[\PP]$ is dense in $\MQB,$ with respect to both $\leq$ and $\leq^*$ relations.
Note that in the above definition we did not require $\langle \PP, \leq_\PP, \leq^*_\PP \rangle$ and $\langle \MQB, \leq_\MQB, \leq^*_\MQB \rangle$ be Prikry type forcing notions.
The following lemma shows the importance of  Prikry type projections.
\begin{lemma}
Assume  $\pi: \PP \rightarrow \MQB$  is a  projection of Prikry type, and assume $\langle \PP, \leq_\PP, \leq^*_\PP \rangle$ is of Prikry type. Then $\langle \MQB, \leq_\MQB, \leq^*_\MQB \rangle$ is also of Prikry type.
\end{lemma}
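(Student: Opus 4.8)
The plan is to verify the Prikry property for $\langle \MQB, \leq_\MQB, \leq^*_\MQB \rangle$ directly, by pulling statements back to $\PP$ along $\pi$ and exploiting the Prikry property there. Fix a condition $q \in \MQB$ and a statement $\sigma$ in the forcing language $\langle \MQB, \leq_\MQB \rangle$; I must produce $q^* \leq^*_\MQB q$ deciding $\sigma$.

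First I would translate $\sigma$ into a statement about $\PP$. By Lemma 2.3, whenever $G$ is $\PP$-generic the filter $H$ generated by $\pi[G]$ is $\MQB$-generic and $V[H] \subseteq V[G]$, so there is a $\PP$-name $\dot H$ for this induced $\MQB$-generic object. Replacing the $\MQB$-generic by $\dot H$ turns $\sigma$ into a statement $\sigma^*$ in the forcing language of $\PP$ with the property that for every $\PP$-generic $G$ one has $V[G] \models \sigma^*$ if and only if $V[H] \models \sigma$. The crucial consequence is the following transfer principle: for any $p \in \PP$, if $\pi(p) \forces_\MQB \sigma$ then $p \forces_\PP \sigma^*$ (and likewise with $\neg\sigma$ in place of $\sigma$), since any $\PP$-generic $G \ni p$ induces an $\MQB$-generic $H \ni \pi(p)$, whence $V[H] \models \sigma$ and therefore $V[G] \models \sigma^*$.

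Next I would descend to a pure condition and apply the Prikry property of $\PP$. Since $q \leq_\MQB \pi(1_\PP) = 1_\MQB$, clause $(3)$ of the definition of a projection of Prikry type yields $p_0 \leq_\PP 1_\PP$ with $\pi(p_0) \leq^*_\MQB q$. As $\langle \PP, \leq_\PP, \leq^*_\PP \rangle$ is of Prikry type, there is $p^* \leq^*_\PP p_0$ deciding $\sigma^*$. I then set $q^* = \pi(p^*)$. By clause $(2)$, namely that $\pi$ preserves $\leq^*$, together with $p^* \leq^*_\PP p_0$, I obtain $q^* = \pi(p^*) \leq^*_\MQB \pi(p_0) \leq^*_\MQB q$, so $q^* \leq^*_\MQB q$ as required.

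It remains to check that $q^*$ decides $\sigma$, and this is where the real work lies. Suppose toward a contradiction that it does not, so there are $q_1, q_2 \leq_\MQB q^* = \pi(p^*)$ with $q_1 \forces_\MQB \sigma$ and $q_2 \forces_\MQB \neg\sigma$. Applying clause $(3)$ of the projection to $q_1 \leq_\MQB \pi(p^*)$ gives $p_1 \leq_\PP p^*$ with $\pi(p_1) \leq^*_\MQB q_1$, hence $\pi(p_1) \leq_\MQB q_1$ and so $\pi(p_1) \forces_\MQB \sigma$; by the transfer principle $p_1 \forces_\PP \sigma^*$. Symmetrically, from $q_2$ one obtains $p_2 \leq_\PP p^*$ with $p_2 \forces_\PP \neg\sigma^*$. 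But $p_1, p_2 \leq_\PP p^*$ then contradict the fact that $p^*$ decides $\sigma^*$. Hence $q^*$ decides $\sigma$, completing the verification. The main obstacle is precisely this transfer of the decision across $\pi$: it depends on having a genuine $\PP$-name $\dot H$ for the induced $\MQB$-generic, so that $\sigma^*$ is well defined, and on clause $(3)$ being strong enough to lift arbitrary $\leq_\MQB$-extensions of $\pi(p^*)$ back below $p^*$ in $\PP$.
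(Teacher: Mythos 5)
Your proof is correct and follows essentially the same route as the paper: pull the $\MQB$-statement back to $\PP$ along the induced generic, decide it there by a $\leq^*_\PP$-extension, and push the deciding condition forward via $\pi$, using $\leq^*$-preservation and clause $(3)$ of the projection. The paper phrases the transfer through Boolean values $\|b\in\pi[\dot G]\|$ in $R.O(\pi[\PP])$ and the $\leq^*$-density of $\pi[\PP]$, whereas you use the name $\dot H$ and an explicit contradiction argument, but these are the same mechanism.
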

\begin{proof}
First we show that $\langle \pi[\PP], \leq_\MQB, \leq^*_\MQB \rangle$ satisfies the Prikry property.
Let $b\in R.O(\pi[\PP])$ and $q \in \pi[\PP].$ Let $p \in \PP$ be such that $q=\pi(p).$ Then  there is $p^* \leq^*_\PP p$ such that $p^*$ decides $\| b\in \pi[\dot{G}]\|_{R.O(\pi[\PP])}$ where $\dot{G}$ is the canonical name for a generic filter over $\PP.$ Let $q^*=\pi(p^*).$ Then $q^*\leq^*_\MQB q$ and  decides $b$.

But  $\pi[\PP]$ is in fact $\leq^*_\MQB-$dense in $\MQB,$ and hence
$\langle \MQB, \leq_\MQB, \leq^*_\MQB \rangle$ satisfies the Prikry property.
\end{proof}

\section{$GCH$ can fail everywhere but holds in $HOD$}
In this section we give a proof of Theorem 1.1.
Subsections 3.1-3.4 are essentially the same as in \cite{friedman-golshani}, but
we have included them
in some detail (except removing the proofs and some extra explanations) to make the paper more self-contained and as we need some of these details of the construction for later use. In subsection 3.5, we define a new forcing notion $\mathbb{Q}_{\bar{E}},$ and then in subsection 3.6 we find a Prikry type projection (defined in section 2) from $\mathbb{P}_{\bar{E}}$ into $\mathbb{Q}_{\bar{E}}.$ Then in subsection 3.7, we use the results of section 2 to prove the basic properties of a generic extension by $\mathbb{Q}_{\bar{E}},$ which are needed for our main theorem. In subsection 3.8, we prove a homogeneity result, and finally we complete the proof of the main theorem in subsection 3.9 by putting all the previous results together.
\subsection{Extender Sequences}

Suppose $j: V^{*} \rightarrow M^{*} \supseteq V_{\lambda}^{*},
crit(j)=\kappa.$ Define an extender (with projections)

\begin{center}
$E(0)= \langle \langle E_{\alpha}(0): \alpha \in \emph{A} \rangle,
\langle \pi_{\beta, \alpha}: \beta, \alpha \in \emph{A}, \beta
\geq_{j} \alpha \rangle \rangle$
\end{center}

on $\kappa$ by:

\begin{itemize}
\item $\emph{A}=[\kappa, \lambda),$ \item $\forall \alpha \in \emph{A}, E_{\alpha}(0)$ is the $\kappa-$complete ultrafilter on $\kappa$ defined by

    \begin{center}
    $X \in E_{\alpha}(0) \Leftrightarrow \alpha \in j(X)$.
    \end{center}
We write $E_\alpha(0)$ as $U_\alpha$.
    \item $\forall \alpha, \beta \in \emph{A}$
     \begin{center}
     $\beta \geq_{j} \alpha \Leftrightarrow \beta \geq \alpha$ and for some $ f \in$$ ^{\kappa} \kappa,$ $  j(f)(\beta)=\alpha$,
     \end{center}
     \item $\beta \geq_{j} \alpha \Rightarrow \pi_{\beta, \alpha}: \kappa \rightarrow \kappa$ is such that $j(\pi_{\beta, \alpha})(\beta)=\alpha$,
\item $\pi_{\alpha, \alpha}=id_{\kappa}.$
\end{itemize}
\begin{remark}
The choice of the $\pi$'s, as far as the 4-th and 5-th bullets are satisfied, does not affect the definition of the $E$-sequence
\end{remark}
Now suppose that we have defined the sequence $\langle E(\tau'): \tau' < \tau  \rangle$. If $\langle E(\tau'): \tau' < \tau  \rangle \notin M^*$ we stop the construction and set

\begin{center}
$\forall \alpha \in \emph{A}, \bar{E}_{\alpha}= \langle \alpha, E(0), ..., E(\tau'), ...: \tau'< \tau \rangle$
\end{center}
and call $\bar{E}_{\alpha}$ \emph{an extender sequence of length $\tau$}  $(\len(
\Es_{\alpha})=\tau).$

If $\langle E(\tau'): \tau' < \tau  \rangle \in M^*$  then we define
an extender (with projections)
$E(\tau)= \langle \langle E_{\langle \alpha, E(\tau'): \tau'< \tau \rangle}(\tau): \alpha \in \emph{A} \rangle,$
$ \langle \pi_{\langle \beta, E(\tau'): \tau'< \tau \rangle, \langle \alpha, E(\tau'): \tau'< \tau \rangle}: \beta, \alpha \in \emph{A}, \beta \geq_{j} \alpha \rangle \rangle$
on $V_{\kappa}$ by:

\begin{itemize}
\item $X \in E_{\langle \alpha, E(\tau'): \tau'< \tau \rangle}(\tau) \Leftrightarrow \langle \alpha, E(\tau'): \tau'< \tau \rangle \in j(X),$
\item for $\beta \geq_{j} \alpha$ in $\emph{A}, \pi_{\langle \beta, E(\tau'): \tau'< \tau \rangle, \langle \alpha, E(\tau'): \tau'< \tau \rangle}(\langle \nu, d \rangle)= \langle \pi_{\beta, \alpha}(\nu), d \rangle $
\end{itemize}

Note that $ E_{\langle \alpha, E(\tau'): \tau'< \tau \rangle}(\tau)$ concentrates on pairs of the form $\langle \nu, d \rangle$ where $\nu < \kappa$ and $d$ is an extender sequence. This makes the above definition well-defined.

We let the construction run until it stops due to the extender
sequence not being in $M^*$ or its length exceeds $j(\kappa)$.

\begin{definition}
\begin{enumerate}
\item $\bar{\mu}$ is an extender sequence if there are $j: V^{*} \rightarrow M^{*}$ and $\bar{\nu}$ such that $\bar{\nu}$ is an extender sequence derived from $j$ as above (i.e $\bar{\nu}=\bar{E_{\alpha}}$ for some $\alpha$) and $\bar{\mu}=\bar{\nu}\upharpoonright \tau$ for some $\tau \leq \len(\bar{\nu}),$

\item $\kappa(\bar{\mu})$ is the ordinal of the beginning of the sequence (i.e $\kappa(\bar{E}_{\alpha})=\alpha$),

\item $\kappa^{0}(\bar{\mu})=(\kappa(\bar{\mu}))^{0}$ (i.e $\kappa^{0}(\bar{E}_{\alpha})= \kappa=$ the critical point of $j$),

\item The sequence $ \langle \bar{\mu}_{1}, \dots, \bar{\mu}_{n} \rangle$ of extender sequences is $^{0}-$increasing if $\kappa^{0}(\bar{\mu}_1) < \dots < \kappa^{0}(\bar{\mu}_n),$

\item The extender sequence $\bar{\mu}$ is permitted to a $^{0}-$increasing sequence $ \langle \bar{\mu}_{1}, \dots, \bar{\mu}_{n} \rangle$ of extender sequences if $\kappa^{0}(\bar{\mu}_n)<\kappa^{0}(\bar{\mu}),$

\item Notation: We write $X \in \bar{E}_{\alpha}$ iff $\forall \xi < \len(\bar{E}_{\alpha}), X \in E_{\alpha}(\xi),$

\item $\bar{E}= \langle \bar{E}_{\alpha}: \alpha \in A  \rangle$ is an \emph{extender sequence system} if there is $j: V^{*} \rightarrow M^{*}$ such that each $\bar{E}_{\alpha}$ is derived from $j$ as above and $\forall \alpha, \beta \in A, \len(\bar{E}_{\alpha})= \len(\bar{E}_{\beta}).$ Call this common length, the length of $\bar{E}, \len(\bar{E}),$

\item For an extender sequence $\bar{\mu},$ we use $\bar{E}(\bar{\mu})$ for the extender sequence system containing $\bar{\mu}$ (i.e $\bar{E}(\bar{E}_{\alpha})= \bar{E}$),

\item $\dom(\bar{E})=A$,

\item $\bar{E}_{\beta}  \geq_{\bar{E}} \bar{E}_{\alpha} \Leftrightarrow \beta  \geq_{j} \alpha.$
\end{enumerate}
\end{definition}

\subsection{Finding generic filters}

Using $GCH$ in $V^*$ we construct  an extender sequence system $\bar{E}= \langle \bar{E}_{\alpha}: \alpha \in \dom\bar{E} \rangle$ where $\dom\bar{E}=[\kappa, \kappa^{+3})$ and $\len(\bar{E})=\kappa^{+}$ such that
the ultrapower $j_{\bar{E}}:V^{*}\rightarrow M_{\bar{E}}^{*}$ (defined
below) contains $V_{\kappa+3}^{*}.$ Suppose that $\bar{E}$ is derived from a $(\kappa+4)$-strong embedding $j: V^{*} \rightarrow M^{*}.$   Consider the following elementary embeddings $\forall \tau' < \tau < \len(\bar{E})=\kappa^+:$
\begin{align*} \label{E-system}
& j_\gt\func  \VS \to \MSt \simeq \Ult(\VS, E(\gt))=
\{j_\tau(f)(\bar E_\alpha \restricted \tau)\mid f\in V^*\},
\notag \\
&  k_\gt(j_\gt(f)(\Es_\ga \restricted \gt))=
        j(f)(\Es_\ga \restricted \gt),
\\
\notag & i_{\gt', \gt}(j_{\gt'}(f)(\Es_\ga \restricted \gt')) =
    j_\gt(f)(\Es_\ga \restricted \gt'),
\\
\notag & \ordered{\MSE,i_{\gt, \Es}} = \limdir \ordered {
        \ordof{\MSt} {\gt < \len(\Es)},
                \ordof{i_{\gt',\gt}} {\gt' \leq \gt < \len(\Es)}
        }.
\end{align*}
We demand that
        $\Es \restricted \gt \in \MSt$ for all $\tau<\len(\bar E)$.

Thus we get the following commutative diagram.

\begin{align*}
\begin{diagram}
\node{\VS}
        \arrow[3]{e,t}{j}
        \arrow{sse,l}{j_{\gt'}}
        \arrow[2]{se,l}{j_\gt}
        \arrow{seee,t,l}{j_{\Es}}
    \node{}
    \node{}
    \node{\MS}
\\
\node{}
    \node{}
    \node{}
        \node{\MSE}
        \arrow{n,r}{k_{\Es}}
\\
\node{}
    \node{M^*_{\gt'}}
        \arrow[2]{ne,t,3}{k_{\gt'}}
        \arrow{nee,t,2}{i_{\gt', \Es}}
        \arrow{e,b}{i_{\gt', \gt}}
    \node{\MSt = \Ult(\VS, E(\gt))}
        \arrow[1]{ne,b}{i_{\gt, \Es}}
        \arrow{nne,b,1}{k_{\gt}}
\end{diagram}
\end{align*}
Note that
\begin{center}
 $ \kappa^{+4}_{M_{\tau^{'}}^{*}} < j_{\tau^{'}}(\kappa) < \kappa^{+4}_{M_{\tau}^{*}} < j_{\tau}(\kappa) < \kappa^{+4}_{M_{\bar{E}^{*}}} \leq \kappa^{+4}_{M^{*}} \leq \kappa^{+4}.$
\end{center}
We also factor through the normal ultrafilter $E_\kappa(0)$ on $\kappa$ to get the following commutative diagram

\begin{align*}
\begin{aligned}
\begin{diagram}
\node{\VS}
        \arrow{e,t}{j_\Es}
        \arrow{se,t}{j_\gt}
        \arrow{s,l}{i_U}
        \node{\MSE}
\\
\node{\NS \simeq \Ult(\VS, U)}
         \arrow{e,b}{i_{U, \gt}}
         \arrow{ne,b}{i_{U, \Es}}
        \node{\MSt}
         \arrow{n,b}{i_{\gt, \Es}}
\end{diagram}
\end{aligned}
\begin{aligned}
\qquad
\begin{split}
& U = E_\gk(0),
\\
& i_U \func  \VS \to \NS \simeq \Ult(\VS, U),
\\
& i_{U, \gt}(i_U(f)(\gk)) = j_\gt(f)(\gk),
\\
& i_{U, \Es}(i_U(f)(\gk)) = j_\Es(f)(\gk).
\end{split}
\end{aligned}
\end{align*}
$\NS$ catches $\VS$ only up to $\VS_{\gk+1}$ and we have
\begin{align*}
\gk^+ < \crit i_{U, \gt} = \crit i_{U, \Es}
    = \gk^{++}_{\NS} < i_U(\gk) < \gk^{++}.
\end{align*}

We now define the forcings for which we will need ``guiding
generics''.

\begin{definition}
Let
\begin{enumerate}
\item $\mathbb{R}_{U}^{\Col}=\Col(\kappa^{+6}, i_{U}(\kappa))_{N^{*}},$

\item  $\mathbb{R}_{U}^{\Add, 1}= \Add(\kappa^{+}, \kappa^{+4})_{N^{*}},$

\item  $\mathbb{R}_{U}^{\Add, 2}= \Add(\kappa^{++}, \kappa^{+5})_{N^{*}},$
\item  $\mathbb{R}_{U}^{\Add, 3}=  \Add(\kappa^{+3}, \kappa^{+6})_{N^{*}},$
\item  $\mathbb{R}_{U}^{\Add, 4}= (\Add(\kappa^{+4}, i_{U}(\kappa)^{+})  \times \Add(\kappa^{+5}, (i_{U}(\kappa)^{++})_{N^{*2}})  \times \Add(\kappa^{+6}, (i_{U}(\kappa)^{+3})_{N^{*2}}) )_{N^{*}},$ where $N^{*2}$ is the second iterate of $V^*$ by $U$,

\item $\mathbb{R}_{U}^{\Add}=\mathbb{R}_{U}^{\Add,1} \times \mathbb{R}_{U}^{\Add,2} \times \mathbb{R}_{U}^{\Add,3} \times \mathbb{R}_{U}^{\Add,4},$

\item $\mathbb{R}_{U} = \mathbb{R}_{U}^{\Add} \times \mathbb{R}_{U}^{\Col}.$
\end{enumerate}
\end{definition}
\begin{remark}
$(j_{\tau}(\kappa)^{++})_{M^*_{\tau}}=(j_{\tau}(\kappa)^{++})_{M^{*2}_{\tau}}$ and $(j_{\tau}(\kappa)^{+3})_{M^*_{\tau}}=(j_{\tau}(\kappa)^{+3})_{M^{*2}_{\tau}}$, where $M^{*2}_{\tau}$ is the second iterate of $V^*$ by $E(\tau).$ Similarly
$(j_{\bar{E}}(\kappa)^{++})_{M^*_{\bar{E}}}=(j_{\bar{E}}(\kappa)^{++})_{M^{*2}_{\bar{E}}}$ and $(j_{\bar{E}}(\kappa)^{+3})_{M^*_{\bar{E}}}=(j_{\bar{E}}(\kappa)^{+3})_{M^{*2}_{\bar{E}}}$, where $M^{*2}_{\bar{E}}$ is the second iterate of $V^*$ by $\bar{E}.$
\end{remark}
Similarly define the forcing notions $\mathbb{R}_\tau$ and $\mathbb{R}_{\bar{E}},$ where $i_U(\kappa), N^*$ are replaced by $j_\tau(\kappa), M^*_\tau$
and $j_{\bar{E}}(\kappa), M^*_{\bar{E}}$ respectively.
Also define the forcing notion $\mathbb{P}$ as follows
\begin{center}
 $\mathbb{P}=\mathbb{P}_{1} \times \mathbb{P}_{2} \times \mathbb{P}_{3} = \Add(\kappa^{+}, (\kappa^{+4})_{M^*_{\bar{E}}}) \times \Add(\kappa^{++}, (\kappa^{+5})_{M^*_{\bar{E}}}) \times \Add(\kappa^{+3}, (\kappa^{+6})_{M^*_{\bar{E}}}) $\footnote{ Hence $\mathbb{P}$ is forcing isomorphic to $\Add(\kappa^{+}, \kappa^{+4}) \times \Add(\kappa^{++}, \kappa^{+4}) \times \Add(\kappa^{+3}, \kappa^{+4})$.}
\end{center}
and let $G=G_{1} \times G_{2} \times G_{3}$ be $\mathbb{P}-$generic over $V^*$. It is clear that $V^{*}[G]$ is a cofinality-preserving generic extension of $V^{*}$ and that $GCH$ holds in $V^{*}[G]$ below and at $\kappa.$ The forcing $\mathbb P$ is our ``preparation
forcing'' (which preserves the $GCH$ below $\kappa$ and facilitates the construction of guiding generics). We set $V=V^*[G]$.

\begin{lemma}
$(1)$ $G_{U}= \langle i_{U}^{''} G_{1} \rangle \times \langle i_{U}^{''} G_{2} \rangle \times \langle i_{U}^{''} G_{3} \rangle$ is $\mathbb{P}_{U}=i_{U}(\mathbb{P})-$generic over $N^{*}$,

$(2)$ $G_{\tau}= \langle j_{\tau}^{''} G_{1} \rangle \times \langle j_{\tau}^{''} G_{2} \rangle \times \langle j_{\tau}^{''} G_{3} \rangle$ is $\mathbb{P}_{\tau}=j_{\tau}(\mathbb{P})-$generic over $M_{\tau}^{*}$,

$(3)$ $G_{\bar{E}}= \langle \bigcup_{\tau < \len(\bar{E})} i_{\tau, \bar{E}}^{''}G_{\tau} \rangle $ is $\mathbb{P}_{\bar{E}}=j_{\bar{E}}(\mathbb{P})-$generic over $M_{\bar{E}}^{*}$,.
\end{lemma}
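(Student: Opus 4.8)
The plan is to prove all three clauses by a single transfer argument that exploits two facts: that $\mathbb{P}=\mathbb{P}_1\times\mathbb{P}_2\times\mathbb{P}_3$ is closed under descending sequences of length $\le\kappa$ (its least-closed factor being $\mathbb{P}_1=\Add(\kappa^+,\cdot)$, whose conditions have size $\le\kappa$), and that each target model is an ultrapower (or a direct limit of ultrapowers) of $V^*$ by a $\kappa$-complete measure/extender with $\crit=\kappa$. Throughout I treat $\mathbb{P}$ as a single forcing and $G=G_1\times G_2\times G_3$ as a single $\mathbb{P}$-generic filter; since every embedding acts coordinatewise, $i_U''G=i_U''G_1\times i_U''G_2\times i_U''G_3$ and the filter it generates is exactly $G_U$ (and likewise for $G_\tau$), so it suffices to show the filter generated by the pointwise image of $G$ is generic for the image forcing over the target model. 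Properness and directedness of these generated filters are immediate, since $G$ is a filter and the embeddings are order preserving and injective on conditions.

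For clause $(1)$, recall $N^*=\Ult(V^*,U)=\{i_U(f)(\kappa):f\in V^*\}$. Let $D\in N^*$ be dense in $i_U(\mathbb{P})$. Write $D=i_U(f)(\kappa)=[f]_U$; modifying $f$ on a $U$-null set, I may assume every $f(\alpha)$ is dense in $\mathbb{P}$. Put
\[
D_f=\{p\in\mathbb{P}:\forall\alpha<\kappa\ \exists q\in f(\alpha)\ (p\le q)\}.
\]
Because $\mathbb{P}$ is closed under descending sequences of length $\le\kappa$, $D_f$ is dense: given any condition, run a descending chain of length $\kappa$ hitting each $f(\alpha)$ in turn and take a lower bound. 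Choose $p\in G\cap D_f$ and, for each $\alpha$, some $q_\alpha\in f(\alpha)$ with $p\le q_\alpha$. By \L o\'s's theorem $[\alpha\mapsto q_\alpha]_U\in[f]_U=D$ and $i_U(p)=[\alpha\mapsto p]_U\le[\alpha\mapsto q_\alpha]_U$; since $i_U(p)\in G_U$ and $G_U$ is upward closed, $[\alpha\mapsto q_\alpha]_U\in G_U\cap D$, as required.

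Clause $(2)$ is the same argument with the extender $E(\tau)$ in place of $U$. Here $M^*_\tau=\{j_\tau(f)(\bar E_\alpha\restriction\tau):f\in V^*\}$, so a dense $D\in M^*_\tau$ has the form $D=j_\tau(F)(\bar E_\alpha\restriction\tau)$ for some $\alpha\in A$ and $F\in V^*$ which, after a modification on a null set, has all values dense in $\mathbb{P}$. The one extra point is that $\dom(F)\subseteq V_\kappa$ has size $\le\kappa$: indeed $E_\alpha(\tau)$ concentrates on pairs $\langle\nu,d\rangle$ with $\nu<\kappa$, so its underlying set lies in $V_\kappa$ and $|V_\kappa|=\kappa$ as $\kappa$ is inaccessible. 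Hence the diagonal set $D_F=\{p:\forall x\in\dom(F)\ \exists q\in F(x)\ (p\le q)\}$ is again dense in $\mathbb{P}$ by $\le\kappa$-closure. Taking $p\in G\cap D_F$ and $q_x\in F(x)$ with $p\le q_x$ for each $x$, elementarity gives $j_\tau(q)(\bar E_\alpha\restriction\tau)\in D$ and $j_\tau(p)\le j_\tau(q)(\bar E_\alpha\restriction\tau)$, so this element lies in $G_\tau\cap D$.

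For clause $(3)$ I reduce to $(2)$ using that $M^*_{\bar E}$ is the direct limit of the $M^*_\tau$ under the maps $i_{\tau',\tau}$, with $j_{\bar E}=i_{\tau,\bar E}\circ j_\tau$. First the generics cohere: since $j_\tau=i_{\tau',\tau}\circ j_{\tau'}$, one gets $i_{\tau',\tau}''G_{\tau'}\subseteq G_\tau$, so $G_{\bar E}=\langle\bigcup_{\tau<\len(\bar E)}i_{\tau,\bar E}''G_\tau\rangle$ is a genuine directed filter. Given $D\in M^*_{\bar E}$ dense in $j_{\bar E}(\mathbb{P})$, the direct-limit representation supplies $\tau<\len(\bar E)$ and $D_\tau\in M^*_\tau$ with $D=i_{\tau,\bar E}(D_\tau)$; by elementarity of $i_{\tau,\bar E}$, $D_\tau$ is dense in $j_\tau(\mathbb{P})$. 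By clause $(2)$ there is $d\in G_\tau\cap D_\tau$, and then $i_{\tau,\bar E}(d)\in G_{\bar E}\cap D$. The main obstacle, and the reason the preparation forcing sits at $\kappa^+$ rather than at $\kappa$, is precisely the density of the diagonal sets $D_f$ and $D_F$: this requires $\mathbb{P}$ to be closed under descending sequences of length $\kappa$, matched against representing functions of domain size $\le\kappa$. Verifying this match (and the $\le\kappa$ bound on the extender's underlying sets) is the crux; everything else is bookkeeping with \L o\'s's theorem and the direct-limit structure.
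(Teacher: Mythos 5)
Your argument is correct: the diagonal density sets $D_f$, $D_F$ (dense by the $\le\kappa$-closure of $\mathbb{P}$, matched against representing functions with domain $\kappa$ resp.\ $V_\kappa$) together with \L o\'s's theorem for clauses $(1)$--$(2)$, and the direct-limit reduction with coherence of the $G_\tau$ for clause $(3)$, is exactly the standard transfer-of-generics argument. The paper itself states this lemma without proof, deferring to the construction in Friedman--Golshani, where essentially this same argument is carried out, so there is nothing to contrast.
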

The generic filters above are such that
 the following diagram is well-defined and commutative:

\begin{align*}
\begin{diagram}
\node{V = \VS[G]}
        \arrow[2]{e,t}{j_\Es}
        \arrow{s,l}{i_{U}}
        \arrow{se,b}{j_{\gt'}}
        \arrow{see,b}{j_\gt}
    \node{}
        \node{\ME = \MSE[G_\Es]}
\\
    \node{N = \NS[G_U]}
         \arrow{e,b}{i_{U, \gt'}}
    \node{M_{\gt'} = M^*_{\gt'}[G_{\gt'}]}
         \arrow{ne,t,3}{i_{\gt', \Es}}
         \arrow{e,b}{i_{\gt', \gt}}
    \node{\Mt = \MSt[G_\gt]}
        \arrow[1]{n,b}{i_{\gt, \Es}}
\end{diagram}
\end{align*}

\begin{lemma} (Existence of guiding generics)
In $V^{*}[G]$ there are $I_{U}, I_{\tau}$ and $I_{\bar{E}}$ such that

$(1)$ $I_{U}$ is $R_{U}-$generic over $N^{*}[G_{U}]$,

$(2)$ $I_{\tau}$ is $R_{\tau}-$generic over $M_{\tau}^{*}[G_{\tau}]$,

$(3)$ $I_{\bar{E}}$ is $R_{\bar{E}}-$generic over $M_{\bar{E}}^{*}[G_{\bar{E}}]$,

$(4)$ The generics are so that we have the following lifting diagram

\begin{align*}
\begin{diagram}
\node{}
    \node{}
        \node{\ME[I_\Es]}
\\
    \node{N[I_U]}
         \arrow{e,b}{i^*_{U, \gt'}}
    \node{M_{\gt'}[I_{\gt'}]}
         \arrow{ne,t,3}{i^*_{\gt', \Es}}
         \arrow{e,b}{i^*_{\gt', \gt}}
    \node{\Mt[I_\gt]}
        \arrow[1]{n,b}{i^*_{\gt, \Es}}
\end{diagram}
\end{align*}
\end{lemma}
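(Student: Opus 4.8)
The plan is to construct $I_U$ outright, then to propagate it coherently along the factor embeddings to obtain $I_\tau$ and finally $I_{\bar{E}}$ as a direct limit. I would begin with $I_U$. Over $N^*$ every cardinal appearing in Definition~2.x for $\mathbb{R}_U$ lies below $\kappa^{++}$ (indeed $i_U(\kappa)<\kappa^{++}$), so each factor of $\mathbb{R}_U$, and hence $\mathbb{R}_U$ itself, has $V^*$-cardinality $\kappa^+$; consequently $N^*[G_U]$ has at most $\kappa^+$ dense subsets of $\mathbb{R}_U$. At the same time $\mathbb{R}_U$ is $\kappa^+$-closed in $V^*[G]$: its least-closed factor is $\mathbb{R}_U^{\Add,1}=\Add(\kappa^+,\cdot)^{N^*}$, and since $\mathbb{P}$ is $\kappa^+$-closed it adds no new $\kappa$-sequences, while $N^*$ is closed under $\kappa$-sequences, so every descending $\le\kappa$-sequence of conditions computed in $V^*[G]$ already lies in $N^*$ and has a lower bound there. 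Enumerating the $\kappa^+$ dense sets in order type $\kappa^+$, meeting them one at a time and taking lower bounds at limits, yields $I_U$ generic over $N^*[G_U]$.

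Next I would define $\langle I_\tau : \tau<\len(\bar{E})\rangle$ by recursion on $\tau<\kappa^+$, demanding at each stage that $I_\tau$ extend the filter generated by $i_{U,\tau}[I_U]$ together with $i_{\tau',\tau}[I_{\tau'}]$ for all $\tau'<\tau$, and be $\mathbb{R}_\tau$-generic over $M^*_\tau[G_\tau]$. The top factors $\mathbb{R}_\tau^{\Col}$ and $\mathbb{R}_\tau^{\Add,4}$ are highly closed (at least $\kappa^{+4}$-closed), so there I would extend the image filter to a generic by a master-condition plus closure argument. The delicate factors are the low Cohen parts $\mathbb{R}_\tau^{\Add,1},\mathbb{R}_\tau^{\Add,2},\mathbb{R}_\tau^{\Add,3}$, and this is exactly where the preparation generic $G=G_1\times G_2\times G_3$ is meant to be used: because $M^*_\tau\subseteq V^*$ and $M^*_\tau$ is closed under $\kappa$-sequences, the relevant instances of $\Add(\kappa^{+i},\cdot)$ agree (or embed) between $M^*_\tau$ and $V^*$, and the $V^*$-generic Cohen material in $G$ can be used to fill in the Cohen part of $I_\tau$; moreover, since $G_\tau$ is generic for a $j_\tau(\kappa)^+$-closed forcing, genericity over $M^*_\tau$ upgrades to genericity over $M^*_\tau[G_\tau]$. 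Finally I would set $I_{\bar{E}}=\bigl\langle\bigcup_{\tau<\len(\bar{E})} i_{\tau,\bar{E}}[I_\tau]\bigr\rangle$; since $M^*_{\bar{E}}[G_{\bar{E}}]$ is the direct limit of the $M^*_\tau[G_\tau]$, every dense $D\subseteq\mathbb{R}_{\bar{E}}$ in $M^*_{\bar{E}}[G_{\bar{E}}]$ equals $i_{\tau,\bar{E}}(\bar{D})$ for some $\tau$ and some dense $\bar{D}\in M^*_\tau[G_\tau]$, so genericity of $I_{\bar{E}}$ is inherited from that of the $I_\tau$. Coherence, imposed at each step of the recursion, makes the lifted maps $i^*_{U,\tau}$, $i^*_{\tau',\tau}$, $i^*_{\tau,\bar{E}}$ well-defined and the displayed diagram commute.

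The main obstacle is the genericity of the $I_\tau$ (equivalently of $I_{\bar{E}}$), not coherence or the construction of $I_U$. The difficulty is quantitative: over $M^*_\tau[G_\tau]$ the forcing $\mathbb{R}_\tau$ has about $\kappa^{+3}$ dense subsets — because $j_\tau(\kappa)<\kappa^{+4}$ forces several of its components to have $V^*$-cardinality up to $\kappa^{+3}$ — yet $\mathbb{R}_\tau$ is only $\kappa^+$-closed and $M^*_\tau$ is only $\kappa$-closed, so the naive limit construction that succeeded for $I_U$ breaks down precisely on the low Cohen factors. This is the role the preparation forcing $\mathbb{P}$ is designed to play: it supplies enough $V^*$-generic Cohen subsets of $\kappa^+,\kappa^{++},\kappa^{+3}$ to serve as guiding generics where closure alone is insufficient. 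The genuinely delicate point, and the step I expect to absorb most of the work, is to select the Cohen parts of the $I_\tau$ from $G$ so that they simultaneously meet all dense sets of $M^*_\tau[G_\tau]$ and respect every factor embedding $i_{U,\tau}$ and $i_{\tau',\tau}$; once this simultaneous genericity-with-coherence is arranged, the highly closed collapse factors and the passage to $I_{\bar{E}}$ at the direct limit are routine.
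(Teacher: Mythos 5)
The paper itself does not prove this lemma: it is one of the statements imported verbatim from \cite{friedman-golshani}, with the proof explicitly omitted, so the comparison below is with the construction carried out there. Your overall architecture is the intended one: build $I_U$ first by a counting-plus-closure argument, then build the $I_\tau$ coherently with the preparation generic $G$ supplying the low Cohen factors, and finally take $I_{\bar E}$ to be the filter generated by $\bigcup_{\tau}i''_{\tau,\bar E}I_\tau$ at the direct limit (the paper records exactly this definition of $I(\bar F)$ in subsection 3.3). The $I_U$ step is correct as you give it: every component of $\mathbb{R}_U$ has true cardinality $\kappa^+$ because $i_U^2(\kappa)<\kappa^{++}$, $\mathbb{R}_U$ is $\kappa^+$-closed in the $\kappa$-closed $N^*$, and $\mathbb{P}$ is $\kappa^+$-distributive, so the $\kappa^+$ many dense sets can be met one at a time. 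You also correctly identify why the preparation forcing exists at all, and that the direct-limit step is routine once the $I_\tau$ cohere.

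There are, however, genuine gaps in the middle step. First, handling $\mathbb{R}^{\Col}_\tau\times\mathbb{R}^{\Add,4}_\tau$ by ``master condition plus closure'' does not work as stated: the closure of these factors is closure \emph{in} $M^*_\tau$, which is only closed under $\kappa$-sequences in $V$, while the ordinals $(\kappa^{+4})_{M^*_\tau}$ and $j_\tau(\kappa)$ have true cardinality $\kappa^{+3}$ and the dense subsets of these factors in $M^*_\tau[G_\tau]$ number at least $\kappa^{+3}$; a chain construction of that length has no lower bounds at limits of cofinality $\kappa^+$ or $\kappa^{++}$, since the initial segments need not lie in $M^*_\tau$. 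The missing ingredient is the representation $M^*_\tau=\{j_\tau(f)(\bar E_\alpha\upharpoonright\tau):f\in V^*\}$ together with the imposed demand $\bar E\upharpoonright\tau\in M^*_\tau$: for each $f$ the family $\{j_\tau(f)(\bar E_\alpha\upharpoonright\tau):\alpha\}$ is an element of $M^*_\tau$ of $M^*_\tau$-size $\kappa^{+3}<(\kappa^{+4})_{M^*_\tau}$, so by distributivity of these factors inside $M^*_\tau$ one may replace the dense sets by the diagonal intersections $\bigcap_\alpha j_\tau(f)(\bar E_\alpha\upharpoonright\tau)$, cutting the count down to something a merely $\kappa$-closed $M^*_\tau$ can handle (and for $\mathbb{R}^{\Add,4}_\tau$ even this count is too large, which is why \cite{friedman-golshani} transfers $I_U$ along $i_{U,\tau}$ rather than building from scratch). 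Second, the identification of $\Add(\kappa^{+i},\cdot)_{M^*_\tau}$ with a rectangle of $\mathbb{P}_i$ follows from $\kappa$-closure of $M^*_\tau$ only for $i=1$; for $i=2,3$ the conditions have size $\kappa^+$ and $\kappa^{++}$, and you need $H(\kappa^{+4})^{V^*}\subseteq M^*_\tau$, i.e., the $(\kappa+3)$-strength of $E(\tau)$, not mere $\kappa$-closure; moreover the upgrade from genericity over $M^*_\tau$ to genericity over $M^*_\tau[G_\tau]$ comes from the $j_\tau(\kappa)^+$-distributivity of $\mathbb{P}_\tau$ \emph{over} $M^*_\tau$ (so no new dense subsets of the small factors appear), not from closure of $\mathbb{P}$ over $V$. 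Third, coherence is not free even for the parts read off from $G$: $\crit(i_{\tau',\tau})=(\kappa^{+4})_{M^*_{\tau'}}$ lies below the index sets $(\kappa^{+i+3})_{M^*_{\tau'}}$, so $i_{\tau',\tau}$ moves the Cohen conditions, and arranging $i_{\tau',\tau}[I_{\tau'}]\subseteq I_\tau$ simultaneously with genericity is the real content of the lemma --- the point at which, as you yourself acknowledge, your proposal stops.
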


Let $R(-,-)$ be a function such that
\begin{center}
$i_{U}^{2}(R)(\kappa, i_{U}(\kappa))=\mathbb{R}_{U},$
\end{center}
where $i_{U}^{2}$ is the second iterate of $i_{U}.$
By applying $i^2_{U, \bar{E}}$ we get.
\begin{lemma}
 $j_{\bar{E}}^{2}(R)(\kappa, j_{\bar{E}}(\kappa))=\mathbb{R}_{\bar{E}}.$
\end{lemma}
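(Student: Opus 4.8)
The plan is to read the Lemma off by applying a single elementary embedding, namely the second iterate $i^{2}_{U,\bar{E}}\colon N^{*2}\to M^{*2}_{\bar{E}}$ of the factor map $i_{U,\bar{E}}\colon N^{*}\to M^{*}_{\bar{E}}$, to the defining property $i_{U}^{2}(R)(\kappa, i_{U}(\kappa))=\mathbb{R}_{U}$ of $R$, and then to identify the four images that appear. Recall from the second commutative diagram that $j_{\bar{E}}=i_{U,\bar{E}}\circ i_{U}$ and, from the displayed inequalities, that $\crit(i_{U,\bar{E}})=\kappa^{++}_{N^{*}}$, whence $\kappa<\crit(i_{U,\bar{E}})<i_{U}(\kappa)$. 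In particular $i_{U,\bar{E}}(\kappa)=\kappa$, while applying the factorization to $\kappa$ gives $i_{U,\bar{E}}(i_{U}(\kappa))=j_{\bar{E}}(\kappa)$.

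First I would record the structural facts about second iterates. Iterating the whole factorization square once more produces the models $N^{*2}$ and $M^{*2}_{\bar{E}}$ together with an elementary map $i^{2}_{U,\bar{E}}\colon N^{*2}\to M^{*2}_{\bar{E}}$ satisfying $j_{\bar{E}}^{2}=i^{2}_{U,\bar{E}}\circ i_{U}^{2}$, with critical point still at least $\kappa^{++}_{N^{*}}$. Since $R\in V^{*}$, the object $i_{U}^{2}(R)$, the arguments $\kappa$ and $i_{U}(\kappa)$, and the value $\mathbb{R}_{U}$ all lie in $N^{*2}$ (the factor $\mathbb{R}^{\Add,4}_{U}$ being precisely the part computed in the second iterate $N^{*2}$). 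Hence $i_{U}^{2}(R)(\kappa, i_{U}(\kappa))=\mathbb{R}_{U}$ is a true statement of $N^{*2}$, to which $i^{2}_{U,\bar{E}}$ may be applied.

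By elementarity this yields $i^{2}_{U,\bar{E}}(i_{U}^{2}(R))\bigl(i^{2}_{U,\bar{E}}(\kappa), i^{2}_{U,\bar{E}}(i_{U}(\kappa))\bigr)=i^{2}_{U,\bar{E}}(\mathbb{R}_{U})$. Here $i^{2}_{U,\bar{E}}(i_{U}^{2}(R))=j_{\bar{E}}^{2}(R)$ by the factorization $j_{\bar{E}}^{2}=i^{2}_{U,\bar{E}}\circ i_{U}^{2}$; the first argument is fixed, $i^{2}_{U,\bar{E}}(\kappa)=\kappa$, since $\kappa$ lies below the critical point; and the second argument is carried to $i^{2}_{U,\bar{E}}(i_{U}(\kappa))=j_{\bar{E}}(\kappa)$, consistently with the first-iterate identity and the commutativity of the iterated factorization diagram. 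Collecting these reduces the Lemma to the single identity $i^{2}_{U,\bar{E}}(\mathbb{R}_{U})=\mathbb{R}_{\bar{E}}$.

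This last identity is where I expect the real work to lie, though it amounts to unwinding the definitions: $\mathbb{R}_{\bar{E}}$ was defined from $\mathbb{R}_{U}$ precisely by replacing $i_{U}(\kappa),N^{*}$ with $j_{\bar{E}}(\kappa),M^{*}_{\bar{E}}$, and this substitution is exactly what $i^{2}_{U,\bar{E}}$ implements. I would verify it factor by factor: the collapse $\Col(\kappa^{+6},i_{U}(\kappa))_{N^{*}}$ is sent to $\Col(\kappa^{+6},j_{\bar{E}}(\kappa))_{M^{*}_{\bar{E}}}$, with $\kappa^{+6}$ fixed as it lies below the critical point; each of $\mathbb{R}^{\Add,1}_{U},\dots,\mathbb{R}^{\Add,3}_{U}$ is sent to its analogue computed in $M^{*}_{\bar{E}}$; and the delicate factor $\mathbb{R}^{\Add,4}_{U}$, the one read off from $N^{*2}$, is sent to the corresponding factor read off from $M^{*2}_{\bar{E}}$. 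For this final factor I would invoke the Remark on the agreement of the $(\,\cdot\,)^{++}$ and $(\,\cdot\,)^{+3}$ computations between $M^{*}_{\bar{E}}$ and $M^{*2}_{\bar{E}}$ to see that the transported values coincide with those used to define $\mathbb{R}_{\bar{E}}$. Once all factors match, $i^{2}_{U,\bar{E}}(\mathbb{R}_{U})=\mathbb{R}_{\bar{E}}$ and the Lemma follows.
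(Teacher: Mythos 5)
Your proposal is correct and is exactly the paper's argument: the paper proves this lemma with the single phrase ``By applying $i^2_{U,\bar{E}}$ we get,'' and you have simply supplied the details it leaves implicit (the critical point computation fixing $\kappa$, the factorization $j_{\bar{E}}^2=i^2_{U,\bar{E}}\circ i_U^2$, and the factor-by-factor identification $i^2_{U,\bar{E}}(\mathbb{R}_U)=\mathbb{R}_{\bar{E}}$ via Remark 3.6).
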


\subsection{Redefining extender Sequences}
As in \cite{merimovich2}, in the prepared model $V=V^*[G]$  we define a new extender sequence system $\bar{F}= \langle \bar{F}_{\alpha}: \alpha \in \dom(\bar{F})\rangle$ by:
\begin{itemize}
  \item $\dom(\bar{F})=\dom(\bar{E}),$
  \item $\len(\bar{F})=\len(\bar{E})$
  \item $\leq_{\bar{F}}=\leq_{\bar{E}},$ \item $F(0)=E(0),$
  \item $I(\tau)=I_{\tau}$ (the guiding $\mathbb{R}_\tau$-generic over $M_\tau=M^*_\tau[G_\tau]$),
  \item $\forall 0< \tau < \len(\bar{F}), F(\tau)= \langle \langle F_{\alpha}(\tau): \alpha \in \dom(\bar{F}) \rangle, \langle \pi_{\beta, \alpha}: \beta, \alpha \in \dom(\bar{F}), \beta \geq_{\bar{F}} \alpha \rangle \rangle$  is such that
    \begin{center}
    $X \in F_{\alpha}(\tau) \Leftrightarrow \langle \alpha, F(0), I(0), ..., F(\tau^{'}), I(\tau^{'}), ...: \tau^{'}  < \tau \rangle \in j_{\bar{E}}(X),$
    \end{center}
and
\begin{center}
$\pi_{\beta, \alpha}(\langle \xi, d \rangle)= \langle \pi_{\beta, \alpha}(\xi), d \rangle, $
\end{center}
\item $\forall \alpha \in \dom(\bar{F}), \bar{F}_{\alpha}= \langle \alpha, F(\tau),I(\tau): \tau < \len(\bar{F})  \rangle.$
\end{itemize}
Also let $I(\bar{F})$ be the filter generated by $\bigcup_{\tau < \len(\bar{F})} i_{\tau, \bar{E}}^{''}I(\tau).$ Then $I(\bar{F})$ is $\mathbb{R}_{\bar{E}}-$generic over $M_{\bar{E}}.$

From now on we work with this new definition of extender sequence
system and use $\bar{E}$ to denote it.

\begin{definition}$(1)$ We write $T \in \bar{E}_{\alpha}$ iff $\forall \xi < \len(\bar{E}_{\alpha}), T \in E_{\alpha}(\xi),$

$(2)$ $T \backslash \bar{\nu} = T \backslash V_{\kappa^{0}(\bar{\nu})}^{*},$

$(3)$ $T \upharpoonright \bar{\nu}= T \cap V_{\kappa^{0}(\bar{\nu})}^{*}.$

\end{definition}

We now define two forcing notions $\mathbb{P}_{\bar{E}}$ and $\MQB_{\bar{E}}.$

\subsection{Definition of the forcing notion $\mathbb{P}_{\bar{E}}$}
 This forcing notion, defined in the ground model $V=V^*[G]$, is  the forcing notion of \cite{friedman-golshani}. We give it in detail for completeness and later use. First we define a forcing notion $\mathbb{P}_{\bar{E}}^{*}.$

\begin{definition}
A condition $p$ in $\mathbb{P}_{\bar{E}}^{*}$ is of the form
\begin{center}
$p = \{ \langle \bar{\gamma}, p^{\bar{\gamma}}\rangle: \bar{\gamma} \in s \} \cup \{\langle \bar{E}_{\alpha}, T, f, F \rangle  \}$
\end{center}
where
\begin{enumerate}
\item $s \in [\bar{E}]^{\leq \kappa}, \min\bar{E}= \bar{E}_{\kappa} \in s,$

 \item$p^{\Es_{\kappa}} \in V_{\kappa^{0}(\bar{E})}^{*}$ is an extender sequence such that $\kappa(p^{\bar{E}_{\kappa}})$ is inaccessible ( we allow $p^{\bar{E}_{\kappa}}= \emptyset$). Write $p^0$ for $p^{\bar{E}_{\kappa}}.$

\item $\forall \bar{\gamma} \in s \backslash \{ \min(s) \}, p^{\bar{\gamma}} \in [V_{\kappa^{0}(\bar{E})}^{*}] ^{< \omega}$ is a $^{0}$-increasing sequence of extender sequences and $\max\kappa(p^{\bar{\gamma}})$ is inaccessible,

\item $\forall \bar{\gamma} \in s, \kappa(p^{0}) \leq \max\kappa(p^{\bar{\gamma}})$,

\item $\forall \bar{\gamma} \in s, \bar{E}_{\alpha} \geq_{\bar{E}} \bar{\gamma},$

\item $T \in \bar{E}_{\alpha},$

\item $\forall \bar{\nu} \in T, \mid \{ \bar{\gamma} \in s: \bar{\nu}$ is permitted to $p^{\bar{\gamma}} \} \mid \leq \kappa^{0}(\bar{\nu}),$

\item $\forall \bar{\beta}, \bar{\gamma} \in s, \forall \bar{\nu} \in T,$ if $\bar{\beta} \neq \bar{\gamma}$ and $\bar{\nu}$ is permitted to $p^{\bar{\beta}}, p^{\bar{\gamma}},$ then $\pi_{\bar{E}_{\alpha}, \bar{\beta}}(\bar{\nu}) \neq \pi_{\bar{E}_{\alpha}, \bar{\gamma}}(\bar{\nu}),$

\item $f$ is a function such that

$\hspace{.5cm}$ $(9.1)$ $\dom(f)= \{\bar{\nu} \in T: \len(\bar{\nu})=0 \},$

$\hspace{.5cm}$ $(9.2)$ $f(\nu_{1}) \in R(\kappa(p^{0}), \nu_{1}^{0}).$ If $p^{0}=\emptyset,$ then $f(\nu_{1})= \emptyset,$

\item $F$ is a function such that

$\hspace{.5cm}$ $(10.1)$ $\dom(F)= \{ \langle \bar{\nu_{1}}, \bar{\nu_{2}} \rangle \in T^{2}:\len(\bar{\nu_{1}})=\len(\bar{\nu_{2}})= \emptyset \},$

$\hspace{.5cm}$ $(10.2)$ $F(\nu_{1}, \nu_{2}) \in R(\nu_{1}^{0}, \nu_{2}^{0}),$

$\hspace{.5cm}$ $(10.3)$ $j_{\bar{E}}^{2}(F)(\alpha, j_{\bar{E}}(\alpha)) \in I(\bar{E})$.
\end{enumerate}
\end{definition}
We write $\mc(p), \supp(p), T^{p}, f^{p}$ and $F^{p}$ for $\bar{E}_{\alpha}, s, T, f$ and $F$ respectively.
\begin{definition}
For $p, q \in \mathbb{P}_{\bar{E}}^{*},$ we say $p$ is a Prikry extension of $q$ ($p \leq^{*} q$ or $p \leq^{0} q$) iff
\begin{enumerate}
\item $\supp(p) \supseteq \supp(q),$

\item  $\forall \bar{\gamma} \in \supp(q), p^{\bar{\gamma}}=q^{\bar{\gamma}},$

\item  $\mc(p) \geq_{\bar{E}} \mc(q),$

\item  $\mc(p) >_{\bar{E}} \mc(q) \Rightarrow \mc(q) \in \supp(p),$

\item  $\forall \bar{\gamma} \in \supp(p) \backslash \supp(q), \max\kappa^{0}(p^{\bar{\gamma}}) > \bigcup \bigcup j_{\bar{E}}(f^{q})(\kappa(\mc(q))),$

\item  $T^{p} \leq \pi_{\mc(p), \mc(q)}^{-1''}T^{q},$

\item  $\forall \bar{\gamma} \in \supp(q), \forall \bar{\nu} \in T^{p},$ if $\bar{\nu}$ is permitted to $p^{\bar{\gamma}},$ then
\begin{center}
$\pi_{\mc(p), \bar{\gamma}}(\bar{\nu})=\pi_{\mc(q), \bar{\gamma}}(\pi_{\mc(p), \mc(q)}(\bar{\nu})),$
\end{center}

\item  $\forall \nu_{1} \in \dom(f^{p}), f^{p}(\nu_{1}) \leq f^{q}\circ\pi_{\mc(p), \mc(q)}(\nu_{1}),$

\item  $\forall \langle \nu_{1}, \nu_{2} \rangle \in \dom(F^{p}), F^{p}(\nu_{1}, \nu_{2}) \leq F^{q}\circ \pi_{\mc(p), \mc(q)}(\nu_{1}, \nu_{2}).$
\end{enumerate}

\end{definition}

We are now ready to define the forcing notion $\mathbb{P}_{\bar{E}}.$
\begin{definition}
A condition $p$ in $\mathbb{P}_{\bar{E}}$ is of the form
\begin{center}
$p=p_{n} ^{\frown} ...^{\frown} p_{0} $
\end{center}
where
\begin{itemize}
\item $p_{0} \in \mathbb{P}_{\bar{E}}^{*}, \kappa^{0}(p_{0}^{0}) \geq \kappa^{0}(\bar{\mu}_{1}),$ \item $p_{1} \in \mathbb{P}_{\bar{\mu}_{1}}^{*}, \kappa^{0}(p_{1}^{0}) \geq \kappa^{0}(\bar{\mu}_{2}),$

$\vdots$

    \item $p_{n} \in \mathbb{P}_{\bar{\mu}_{n}}^{*}.$
\end{itemize}
and $\langle \bar{\mu}_{n}, ..., \bar{\mu}_{1}, \bar{E} \rangle$ is a $^{0}-$inceasing sequence of extender sequence systems, that is $\kappa^{0}(\bar{\mu}_{n}) < ... < \kappa^{0}(\bar{\mu}_{1}) < \kappa^{0}(\bar{E}).$
\end{definition}
\begin{definition}
For $p, q \in \mathbb{P}_{\bar{E}},$ we say $p$ is a Prikry extension of $q$ ($p \leq^{*} q$ or $p \leq^{0} q$) iff
\begin{center}
$p=p_{n} ^{\frown} ...^{\frown} p_{0} $

$q=q_{n} ^{\frown} ...^{\frown} q_{0} $
\end{center}
where
\begin{itemize}
\item $p_{0}, q_{0} \in \mathbb{P}_{\bar{E}}^{*}, p_{0} \leq^{*} q_{0},$  \item $p_{1}, q_{1} \in \mathbb{P}_{\bar{\mu}_{1}}^{*}, p_{1} \leq^{*} q_{1},$

   $\vdots$

     \item $p_{n}, q_{n} \in \mathbb{P}_{\bar{\mu}_{n}}^{*}, p_{n} \leq^{*} q_{n}.$
\end{itemize}
\end{definition}
Now let $p \in \mathbb{P}_{\bar{E}}$ and $\bar{\nu} \in T^{p}.$ We define $p_{\langle \bar{\nu} \rangle}$ a one element extension of $p$ by $\bar{\nu}.$
\begin{definition}
Let $p \in \mathbb{P}_{\bar{E}}, \bar{\nu} \in T^{p}$ and $\kappa^{0}(\bar{\nu}) > \bigcup \bigcup j_{\bar{E}}(f^{p, \Col})(\kappa(\mc(p)))$, where $f^{p, \Col}$ is the collapsing part of $f^{p}$. Then $p_{\langle \bar{\nu}\rangle}=p_{1} ^{\frown} p_{0}$ where
\begin{enumerate}
\item $\supp(p_{0})=\supp(p),$

\item $\forall \bar{\gamma} \in \supp(p_{0}),$

$p_{0}^{\bar{\gamma}} = \left\{
\begin{array}{l}
      \pi_{\mc(p), \bar{\gamma}}(\bar{\nu}) \hspace{1.65cm} \text{ if } \bar{\nu} \text{ is permitted to } p^{\bar{\gamma}} \text{ and } \len(\bar{\nu}) >0, \\
       \pi_{\mc(p), \bar{\gamma}}(\bar{\nu}) \hspace{1.65cm} \text{ if } \bar{\nu} \text{ is permitted to } p^{\bar{\gamma}}, \len(\bar{\nu})=0 \text{ and } \bar{\gamma}=\bar{E}_{\kappa},
       \\
       p^{\bar{\gamma} \frown} \langle \pi_{\mc(p), \bar{\gamma}}(\bar{\nu}) \rangle \hspace{.7cm} \text{ if } \bar{\nu} \text{ is permitted to } p^{\bar{\gamma}}, \len(\bar{\nu})=0 \text{ and } \bar{\gamma}\neq \bar{E}_{\kappa},
       \\
       p^{\bar{\gamma}} \hspace{2.95cm} \text{ otherwise }.

     \end{array} \right.$

\item $\mc(p_{0})=\mc(p),$

\item $T^{p_{0}}=T^{p} \backslash \bar{\nu},$

\item $\forall \nu_{1} \in T^{p_{0}}, f^{p_{0}}(\nu_{1})=F^{p}(\kappa(\bar{\nu}), \nu_{1}),$

\item $F^{p_{0}}=F^{p},$

\item if $\len(\bar{\nu})>0$ then

$\hspace{.5cm}$ $(7.1)$ $\supp(p_{1})=\{\pi_{\mc(p), \bar{\gamma}}(\bar{\nu}): \bar{\gamma} \in \supp(p)$ and $\bar{\nu}$ is permitted to $p^{\bar{\gamma}}\},$

$\hspace{.5cm}$ $(7.2)$ $p_{1}^{\pi_{\mc(p), \bar{\gamma}}(\bar{\nu})}=p^{\bar{\gamma}},$

$\hspace{.5cm}$ $(7.3)$ $\mc(p_{1})=\bar{\nu},$

$\hspace{.5cm}$ $(7.4)$ $T^{p_{1}}=T^{p} \upharpoonright \bar{\nu},$

$\hspace{.5cm}$ $(7.5)$ $f^{p_{1}}=f^{p} \upharpoonright \bar{\nu},$

$\hspace{.5cm}$ $(7.6)$ $F^{p_{1}}=F^{p} \upharpoonright \bar{\nu},$

\item if $\len(\bar{\nu})=0$ then

$\hspace{.5cm}$ $(8.1)$ $\supp{p_{1}} =\{ \pi_{\mc(p),0}(\bar{\nu}) \},$

$\hspace{.5cm}$ $(8.2)$ $p_{1}^{\pi_{\mc(p),0}(\bar{\nu})}=p^{\bar{E}_{\kappa}},$

$\hspace{.5cm}$ $(8.3)$ $\mc(p_{1})=\bar{\nu}^{0},$

$\hspace{.5cm}$ $(8.4)$ $T^{p_{1}}= \emptyset,$

$\hspace{.5cm}$ $(8.5)$ $f^{p_{1}}=f^{p}(\kappa(\bar{\nu})),$

$\hspace{.5cm}$ $(8.6)$ $F^{p_{1}}= \emptyset.$
\end{enumerate}
\end{definition}
We use $(p_{\langle \bar{\nu} \rangle})_{0}$ and $(p_{\langle \bar{\nu} \rangle})_{1}$ for $p_{0}$ and $p_{1}$ respectively. We also let $p_{\langle \bar{\nu_{1}}, \bar{\nu_{2}} \rangle }= (p_{\langle \bar{\nu}_{1}\rangle})_{1} ^{\frown} (p_{\langle \bar{\nu}_{1} \rangle})_{0 \langle \bar{\nu_{2}} \rangle}$ and so on.

The above definition is the key step in the definition of the forcing relation $\leq.$
\begin{definition}
For $p, q \in \mathbb{P}_{\bar{E}},$ we say $p$ is a $1-$point extension of $q$ ($p \leq^{1} q$) iff
\begin{center}
$p=p_{n+1} ^{\frown} ...^{\frown} p_{0} $

$q=q_{n} ^{\frown} ...^{\frown} q_{0} $
\end{center}
and there is $0 \leq k \leq n$ such that
\begin{itemize}
\item $\forall i < k, p_{i}, q_{i} \in \mathbb{P}_{\bar{\mu}_{i}}^{*}, p_{i} \leq^{*} q_{i},$ \item $\exists \bar{\nu} \in T^{q_{k}}, (p_{k+1}) ^{\frown}p_{k} \leq^{*} (q_{k})_{ \langle \bar{\nu} \rangle}$ \item $\forall i > k, p_{i+1}, q_{i} \in \mathbb{P}_{\bar{\mu}_{i}}^{*}, p_{i+1} \leq^{*} q_{i},$
\end{itemize}
where $\bar{\mu}_{0}=\bar{E}.$
\end{definition}
\begin{definition}
For $p, q \in \mathbb{P}_{\bar{E}},$ we say $p$ is an $n-$point extension of $q$ ($p \leq^{n} q$) iff there are $p^{n}, ..., p^{0}$ such that
\begin{center}
$p=p^{n} \leq^{1} ... \leq^{1} p^{0}=q.$
\end{center}
\end{definition}
\begin{definition}
For $p, q \in \mathbb{P}_{\bar{E}},$ we say $p$ is an extension of $q$ ($p \leq q$) iff there is some $n$ such that $p \leq^{n} q$.
\end{definition}
Suppose that $H$ is $\mathbb{P}_{\bar{E}}-$generic over $V=V^*[G]$. For $\alpha \in \dom(\bar{E})$ set
\begin{center}
$C_{H}^{\alpha} = \{ \max\kappa(p_{0}^{\bar{E}_{\alpha}}): p \in H \}.$
\end{center}
The following theorem summarizes the main properties of the forcing  extension.
\begin{theorem}
$(1)$ $V[H]$ and $V$ have the same cardinals $\geq \kappa,$

$(2)$ $\kappa$ remains strongly inaccessible in $V[H]$

$(3)$ $C_{H}^{\alpha}$ is unbounded in $\kappa,$

$(4)$ $C_{H}^{\kappa}$ is a club in $\kappa,$

$(5)$ $\alpha \neq \beta \Rightarrow C_{H}^{\alpha} \neq C_{H}^{\beta},$

$(6)$ Let $\l=\min(C_{H}^{\kappa}),$ and let $K$ be $\Col(\omega, \l^{+})_{V[H]}-$generic over $V[H].$ Then
\begin{center}
$\C$$ARD^{V[H][K]} \cap\kappa = (\lim(C_{H}^{\kappa}) \cup \{\mu^{+}, ..., \mu^{+6}: \mu \in C_{H}^{\kappa}\} \backslash \l^{++}) \cup \{ \omega \},$
\end{center}

$(7)$ $V[H][K] \models  `` \forall \lambda \leq \kappa, 2^{\lambda}=\lambda^{+3}$''.
\end{theorem}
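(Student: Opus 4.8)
The statement (Theorem 3.17) is established in \cite{friedman-golshani}; here I sketch the route I would take and flag where the real work lies. The backbone of everything is the \emph{Prikry property} for $\langle \mathbb{P}_{\bar{E}}, \leq, \leq^{*} \rangle$: given a condition $p$ and a statement $\sigma$ of the forcing language, one must produce $q \leq^{*} p$ deciding $\sigma$. I would prove this by the usual diagonalization, shrinking the measure-one trees $T^{p}$ and the coordinate functions $f^{p}, F^{p}$ coordinate by coordinate, using the $\kappa$-completeness and normality of the extender measures $E_{\alpha}(\xi)$ to thin down to a single tree on which all the one-point extensions $p_{\langle \bar{\nu} \rangle}$ (Definition 3.13) force the same decision, and then absorbing these uniform decisions into one direct extension. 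The guiding generic $I(\bar{E})$ enters precisely to let the functions $F^{p}$ be diagonalized coherently (condition (10.3) of Definition 3.9), and the closure of the direct-extension order lets the transfinite recursion run. This step is by far the main obstacle; everything else is comparatively routine once it is in hand.

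Granting the Prikry property, I would check that the direct-extension order is sufficiently closed: on the pure part $\langle \mathbb{P}_{\bar{E}}^{*}, \leq^{*} \rangle$ the order is $\kappa$-closed, since supports have size $\leq \kappa$, the trees live on $V^{*}_{\kappa^{0}(\bar{E})}$, and the relevant measures are $\kappa$-complete. By Lemma 2.4 this yields that forcing with $\mathbb{P}_{\bar{E}}$ adds no new bounded subsets of $\kappa$, which immediately gives (2), that $\kappa$ stays strongly inaccessible, together with preservation of all cardinals and cofinalities below $\kappa$. For cardinals $\geq \kappa$ in (1), I would combine a cardinality count (under the $GCH$ assumptions in $V^{*}$ one has $|\mathbb{P}_{\bar{E}}| \leq \kappa^{+3}$, so the forcing is $\kappa^{+4}$-c.c. and preserves cardinals $\geq \kappa^{+4}$) with a factorization/density analysis showing that the part responsible for blowing up $2^{\kappa}$ does not collapse $\kappa^{+}, \kappa^{++}, \kappa^{+3}$.

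Properties (3)--(5) are genericity arguments about the generated sequences $C_{H}^{\alpha}$. Unboundedness (3) is a density argument: below any condition one may add a further point $\bar{\nu}$ with $\kappa(\pi_{\mc(p),\bar{\gamma}}(\bar{\nu}))$ as large as desired, so $\sup C_{H}^{\alpha} = \kappa$. Closure at the top coordinate (4) is the Radin feature: a one-point extension by $\bar{\nu}$ of positive length re-inserts a full lower-rank block $p_{1} \in \mathbb{P}_{\bar{\mu}}^{*}$ (Definition 3.13), which forces the limit points of $C_{H}^{\kappa}$ into the club. Distinctness (5) is exactly what condition (8) of Definition 3.9 was arranged to secure: for $\bar{\beta} \neq \bar{\gamma}$ and any permitted $\bar{\nu}$ one has $\pi_{\bar{E}_{\alpha}, \bar{\beta}}(\bar{\nu}) \neq \pi_{\bar{E}_{\alpha}, \bar{\gamma}}(\bar{\nu})$, so the $\alpha$-th and $\beta$-th coordinates of the generic sequence disagree on a tail and $C_{H}^{\alpha} \neq C_{H}^{\beta}$.

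Finally, (6) and (7) are the cardinal-arithmetic bookkeeping in $V[H][K]$. Here I would read off, between consecutive points $\mu < \mu'$ of the Radin club $C_{H}^{\kappa}$, exactly which cardinals are collapsed and which powers are blown up by the interleaved guiding generics: the collapse $\mathbb{R}_{U}^{\Col} = \Col(\kappa^{+6}, i_{U}(\kappa))$ (and its images) kills the interval so that only $\mu, \mu^{+}, \dots, \mu^{+6}$ and the members of $\lim(C_{H}^{\kappa})$ survive, while the $\Add$-factors $\mathbb{R}_{U}^{\Add}$ force $2^{\mu} = \mu^{+3}$ at each surviving $\mu$; the bottom collapse $\Col(\omega, \l^{+})$ folds the initial segment below $\l^{++}$ down to $\omega$. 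Assembling these local computations through the commutative system of embeddings gives the displayed formula in (6) and then $2^{\lambda} = \lambda^{+3}$ for all $\lambda \leq \kappa$ in (7). The delicate point throughout is that the guiding generics $I_{U}, I_{\tau}, I_{\bar{E}}$ of Lemma 3.6 were chosen coherently with the lifting diagram, set up in Subsections 3.2--3.3, so that these local effects are genuinely realized by the single generic $H$.
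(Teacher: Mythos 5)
The paper offers no proof of this theorem: it is quoted from \cite{friedman-golshani}, with the explicit remark that the proofs of Subsections 3.1--3.4 have been omitted, so your deferral to that reference is exactly what the paper does, and your sketched route (Prikry property by diagonalization over the measure-one trees using the guiding generics, closure/factorization of the direct-extension order for preservation below $\kappa$, a chain-condition count above $\kappa$, density arguments for (3)--(5), and local cardinal arithmetic between successive points of $C^{\kappa}_{H}$ for (6)--(7)) is the standard one carried out there. One caution should you write this out: the direct-extension order on $\mathbb{P}^{*}_{\bar{E}}$ below a condition with $p^{0}\neq\emptyset$ is only as closed as the forcings $R(\kappa(p^{0}),\nu_{1}^{0})$ permit (roughly $\kappa(p^{0})^{+}$-closed, not $\kappa$-closed), so preservation of cardinals and inaccessibility below $\kappa$ must come from the splitting of the forcing at points of the generic club (as in Lemmas 3.33--3.34 for $\mathbb{Q}_{\bar{E}}$) combined with the Prikry property, rather than from $\kappa$-closure of the pure part.
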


\subsection{Definition of the forcing notion $\mathbb{Q}_{\bar{E}}$}
 We now define another forcing notion $\mathbb{Q}_{\bar{E}}.$ This forcing notion will produce a cardinal and $GCH$ preserving extension of $V$ such that there exists a projection $\pi: \mathbb{P}_{\bar{E}} \rightarrow \mathbb{Q}_{\bar{E}},$
  and $\mathbb{Q}_{\bar{E}}$ is as close to $\mathbb{P}_{\bar{E}}$ as possible, so that the quotient forcing  is sufficiently homogeneous to guarantee that $HOD^{V[H]} \subseteq V[H^*],$ where $H^*$ is the filter generated by $\pi[H].$

The definition of  $\mathbb{Q}_{\bar{E}}$ is similar to that of  $\mathbb{P}_{\bar{E}},$ and so we proceed the definitions analogous to those of the last section.
First we define a forcing notion $\mathbb{Q}_{\bar{E}}^{*}.$

\begin{definition}
A condition $p$ in $\mathbb{Q}_{\bar{E}}^{*}$ is of the form
\begin{center}
$p = \{ \langle \bar{\gamma}, p^{\bar{\gamma}}\rangle: \bar{\gamma} \in s \} \cup \{\langle \bar{E}_{\alpha}, T \rangle  \}$
\end{center}
where
\begin{enumerate}
\item $s \in [\bar{E}]^{\leq \kappa}, \min\bar{E}= \bar{E}_{\kappa} \in s,$

 \item$p^{\Es_{\kappa}} \in V_{\kappa^{0}(\bar{E})}^{*}$ is an extender sequence such that $\kappa(p^{\bar{E}_{\kappa}})$ is inaccessible ( we allow $p^{\bar{E}_{\kappa}}= \emptyset$). Write $p^0$ for $p^{\bar{E}_{\kappa}}.$

\item $\forall \bar{\gamma} \in s \backslash \{ \min(s) \}, p^{\bar{\gamma}}=\langle \rangle,$

\item $\forall \bar{\gamma} \in s, \bar{E}_{\alpha} \geq_{\bar{E}} \bar{\gamma},$

\item $T \in \bar{E}_{\alpha},$

\end{enumerate}
\end{definition}
We write $\mc(p), \supp(p)$ and  $T^{p}$, for $\bar{E}_{\alpha}, s$ and  $T$ respectively.
\begin{definition}
For $p, q \in \mathbb{Q}_{\bar{E}}^{*},$ we say $p$ is a Prikry extension of $q$ ($p \leq^{*} q$ or $p \leq^{0} q$) iff
\begin{enumerate}
\item $\supp(p) \supseteq \supp(q),$

\item  $p^0=q^0$

\item  $\mc(p) \geq_{\bar{E}} \mc(q),$

\item  $\mc(p) >_{\bar{E}} \mc(q) \Rightarrow \mc(q) \in \supp(p),$

\item  $T^{p} \leq \pi_{\mc(p), \mc(q)}^{-1''}T^{q},$
\end{enumerate}

\end{definition}
Before we continue, let us mention that in the definition of a condition $p$ in $\mathbb{Q}_{\bar{E}}^{*},$ the set $\supp(p)$
has no real role, and we can avoid mentioning it; all we require is to know what $\mc(p)$ is. We can make this precise as follows.
Define a relation $\sim$ on $\mathbb{Q}_{\bar{E}}^{*}$ by $p \sim q$ if and only if the following conditions are satisfied:
\begin{enumerate}
\item [$\sim_1:$] $\langle \bar{E}_\kappa, p^{\bar{E}_\kappa} \rangle= \langle \bar{E}_\kappa, q^{\bar{E}_\kappa} \rangle,$

\item [$\sim_2:$] $\mc(p)=\mc(q),$

\item [$\sim_3:$] $T^p=T^q.$
\end{enumerate}
It is easily seen that $\sim$ is an equivalence relation. We identify two elements of $\mathbb{Q}_{\bar{E}}^{*}$ if they are $\sim$-equivalent.
In what follows we use our original definition given above, but we have in mind that a condition should be replaced with its equivalence class. We may note that this identification is used, for example, in the proof of Lemmas 3.28 and 3.36$(4)$ below.
We now  define the forcing notion $\mathbb{Q}_{\bar{E}}.$
\begin{definition}
A condition $p$ in $\mathbb{Q}_{\bar{E}}$ is of the form
\begin{center}
$p=p_{n} ^{\frown} ...^{\frown} p_{0} $
\end{center}
where
\begin{itemize}
\item $p_{0} \in \mathbb{Q}_{\bar{E}}^{*}, \kappa^{0}(p_{0}^{0}) \geq \kappa^{0}(\bar{\mu}_{1}),$
\item $p_{1} \in \mathbb{Q}_{\bar{\mu}_{1}}^{*}, \kappa^{0}(p_{1}^{0}) \geq \kappa^{0}(\bar{\mu}_{2}),$

$\vdots$

    \item $p_{n} \in \mathbb{Q}_{\bar{\mu}_{n}}^{*}.$
\end{itemize}
and $\langle \bar{\mu}_{n}, ..., \bar{\mu}_{1}, \bar{E} \rangle$ is a $^{0}-$inceasing sequence of extender sequence systems, that is $\kappa^{0}(\bar{\mu}_{n}) < ... < \kappa^{0}(\bar{\mu}_{1}) < \kappa^{0}(\bar{E}).$
\end{definition}
\begin{definition}
For $p, q \in \mathbb{Q}_{\bar{E}},$ we say $p$ is a Prikry extension of $q$ ($p \leq^{*} q$ or $p \leq^{0} q$) iff
\begin{center}
$p=p_{n} ^{\frown} ...^{\frown} p_{0} $

$q=q_{n} ^{\frown} ...^{\frown} q_{0} $
\end{center}
where
\begin{itemize}
\item $p_{0}, q_{0} \in \mathbb{Q}_{\bar{E}}^{*}, p_{0} \leq^{*} q_{0},$
\item $p_{1}, q_{1} \in \mathbb{Q}_{\bar{\mu}_{1}}^{*}, p_{1} \leq^{*} q_{1},$

   $\vdots$

     \item $p_{n}, q_{n} \in \mathbb{Q}_{\bar{\mu}_{n}}^{*}, p_{n} \leq^{*} q_{n}.$
\end{itemize}
\end{definition}
Now let $p \in \mathbb{Q}_{\bar{E}}$ and $\bar{\nu} \in T^{p}.$ We define $p_{\langle \bar{\nu} \rangle}$ a one element extension of $p$ by $\bar{\nu}.$
\begin{definition}
Let $p \in \mathbb{Q}_{\bar{E}}$ and  $\bar{\nu} \in T^{p}$. Then $p_{\langle \bar{\nu}\rangle}=p_{1} ^{\frown} p_{0}$ where
\begin{enumerate}
\item $\supp(p_{0})=\supp(p),$

\item $p_0^0= \pi_{\mc(p),0}(\bar{\nu}),$

\item
$p_{0}^{\bar{\gamma}} = \langle \rangle$ if $\bar{\gamma} \neq \bar{E}_\kappa,$

\item $T^{p_0}=T^p \setminus \bar{\nu},$

\item if $\len(\bar{\nu})>0$ then

$\hspace{.5cm}$ $(7.1)$ $\supp(p_{1})=\{\pi_{\mc(p), \bar{\gamma}}(\bar{\nu}): \bar{\gamma} \in \supp(p) \},$

$\hspace{.5cm}$ $(7.2)$ $p^{\pi_{\mc(p), 0}(\bar{\nu})} =p^{0},$

$\hspace{.5cm}$ $(7.3)$ $p^{\pi_{\mc(p), \bar{\gamma}}(\bar{\nu})} =\langle \rangle,$ if $\bar{\gamma} \neq \bar{E}_\kappa,$

$\hspace{.5cm}$ $(7.4)$ $\mc(p_{1})=\bar{\nu},$

$\hspace{.5cm}$ $(7.5)$ $T^{p_{1}}=T^{p} \upharpoonright \bar{\nu},$

\item if $\len(\bar{\nu})=0$ then

$\hspace{.5cm}$ $(8.1)$ $\supp{p_{1}} =\{ \pi_{\mc(p),0}(\bar{\nu}) \},$

$\hspace{.5cm}$ $(8.2)$ $ p_{1}^{\pi_{\mc(p),0}(\bar{\nu})}=p^{0},$

$\hspace{.5cm}$ $(8.3)$ $\mc(p_{1})=\bar{\nu}^{0},$

$\hspace{.5cm}$ $(8.4)$ $T^{p_{1}}= \emptyset.$
\end{enumerate}
\end{definition}
We use $(p_{\langle \bar{\nu} \rangle})_{0}$ and $(p_{\langle \bar{\nu} \rangle})_{1}$ for $p_{0}$ and $p_{1}$ respectively. We also let $p_{\langle \bar{\nu_{1}}, \bar{\nu_{2}} \rangle }= (p_{\langle \bar{\nu}_{1}\rangle})_{1} ^{\frown} (p_{\langle \bar{\nu}_{1} \rangle})_{0 \langle \bar{\nu_{2}} \rangle}$ and so on.

The above definition is the key step in the definition of the forcing relation $\leq.$
\begin{definition}
For $p, q \in \mathbb{Q}_{\bar{E}},$ we say $p$ is a $1-$point extension of $q$ ($p \leq^{1} q$) iff
\begin{center}
$p=p_{n+1} ^{\frown} ...^{\frown} p_{0} $

$q=q_{n} ^{\frown} ...^{\frown} q_{0} $
\end{center}
and there is $0 \leq k \leq n$ such that
\begin{itemize}
\item $\forall i < k, p_{i}, q_{i} \in \mathbb{Q}_{\bar{\mu}_{i}}^{*}, p_{i} \leq^{*} q_{i},$
\item $\exists \bar{\nu} \in T^{q_{k}}, (p_{k+1}) ^{\frown}p_{k} \leq^{*} (q_{k})_{ \langle \bar{\nu} \rangle}$
\item $\forall i > k, p_{i+1}, q_{i} \in \mathbb{Q}_{\bar{\mu}_{i}}^{*}, p_{i+1} \leq^{*} q_{i},$
\end{itemize}
where $\bar{\mu}_{0}=\bar{E}.$
\end{definition}
\begin{definition}
For $p, q \in \mathbb{Q}_{\bar{E}},$ we say $p$ is an $n-$point extension of $q$ ($p \leq^{n} q$) iff there are $p^{n}, ..., p^{0}$ such that
\begin{center}
$p=p^{n} \leq^{1} ... \leq^{1} p^{0}=q.$
\end{center}
\end{definition}
\begin{definition}
For $p, q \in \mathbb{Q}_{\bar{E}},$ we say $p$ is an extension of $q$ ($p \leq q$) iff there is some $n$ such that $p \leq^{n} q$.
\end{definition}

In the next subsection, we show the existence of a Prikry type projection from
$\mathbb{P}_{\bar{E}}$ into $\mathbb{Q}_{\bar{E}}$, and then use the results of section 2 to prove the basic properties of
$\mathbb{Q}_{\bar{E}}$.

\subsection{Projection of $\mathbb{P}_{\bar{E}}$ into $\mathbb{Q}_{\bar{E}}$}
 In this subsection we define a Prikry type projection $\Phi: \mathbb{P}_{\bar{E}} \rightarrow \mathbb{Q}_{\bar{E}}$.
 Before doing that, we define a projection
\begin{center}
$\Phi_{\bar{E}}^*: \mathbb{P}^*_{\bar{E}} \rightarrow \mathbb{Q}^*_{\bar{E}}$
\end{center}
as follows.
Suppose $p \in \mathbb{P}_{\bar{E}}^{*},$ say
\begin{center}
$p = \{ \langle \bar{\gamma}, p^{\bar{\gamma}}\rangle: \bar{\gamma} \in s \} \cup \{\langle \bar{E}_{\alpha}, T, f, F \rangle  \}.$
\end{center}
Then  set
\begin{center}
$\Phi_{\bar{E}}^*(p) = \{ \langle \bar{\gamma}, \Phi_{\bar{E}}^*(p^{\bar{\gamma}})\rangle: \bar{\gamma} \in s \} \cup \{\langle \bar{E}_{\alpha}, T \rangle  \},$
\end{center}
where
\begin{enumerate}
\item $\Phi_{\bar{E}}^*(p^0)=p^0,$
\item $\Phi_{\bar{E}}^*(p^{\bar{\gamma}})= \langle \rangle,$ if $\bar{\gamma}\neq \bar{E}_\kappa.$
\end{enumerate}
It is evident that $\Phi_{\bar{E}}^*$ is well-defined.
\begin{lemma}
$\Phi_{\bar{E}}^*$ is a projection  from $(\mathbb{P}^*_{\bar{E}}, \leq^*)$ into $(\mathbb{Q}^*_{\bar{E}}, \leq^*).$
\end{lemma}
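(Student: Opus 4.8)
The plan is to verify the three clauses of Definition 2.1, with $\leq$ everywhere replaced by $\leq^*$. Clauses (1) and (2) are routine. For (1), the maximal condition of $\mathbb{P}^*_{\bar{E}}$ has empty stem, $s=\{\bar{E}_\kappa\}$, $p^0=\emptyset$ and full tree, and $\Phi_{\bar{E}}^*$ sends it to the maximal condition of $\mathbb{Q}^*_{\bar{E}}$. For (2), comparing Definition 3.11 with Definition 3.21 shows that the five clauses defining $\leq^*$ on $\mathbb{Q}^*_{\bar{E}}$ are exactly the coordinate-free clauses (1),(3),(4),(6) of Definition 3.11 together with $p^0=q^0$, the latter being the instance $\bar{\gamma}=\bar{E}_\kappa$ of clause (2) of Definition 3.11 (recall $\bar{E}_\kappa\in\supp(q)$ always). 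Since $\Phi_{\bar{E}}^*$ preserves $\supp$, $\mc$, $T$ and the value at $\bar{E}_\kappa$, each of these transfers immediately, so $p\leq^* q$ implies $\Phi_{\bar{E}}^*(p)\leq^*\Phi_{\bar{E}}^*(q)$.

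The substance is clause (3). Fix $p\in\mathbb{P}^*_{\bar{E}}$ and $\bar{q}\in\mathbb{Q}^*_{\bar{E}}$ with $\bar{q}\leq^*\Phi_{\bar{E}}^*(p)$; I must produce $p^*\leq^* p$ in $\mathbb{P}^*_{\bar{E}}$ with $\Phi_{\bar{E}}^*(p^*)\leq^*\bar{q}$. I will build $p^*$ as a lift of $\bar{q}$: set $\mc(p^*)=\mc(\bar{q})$, $\supp(p^*)=\supp(\bar{q})$, and $p^{*,\bar{\gamma}}=p^{\bar{\gamma}}$ for $\bar{\gamma}\in\supp(p)$ (in particular $p^{*,0}=p^0=\bar{q}^0$, using clause (2) of $\bar{q}\leq^*\Phi_{\bar{E}}^*(p)$). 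The functions $f^{p^*},F^{p^*}$ are defined by pulling back along the projection, $f^{p^*}=f^p\circ\pi_{\mc(p^*),\mc(p)}$ and $F^{p^*}=F^p\circ\pi_{\mc(p^*),\mc(p)}$ on the appropriate domains; this makes clauses (8),(9) of Definition 3.11 hold with equality, while clause (10.3) of Definition 3.9 for $F^{p^*}$ follows from that for $F^p$ because applying $j_{\bar{E}}^2$ commutes with the pullback and evaluates to the same element of $I(\bar{E})$. Likewise $\mc(\bar{q})\geq_{\bar{E}}\mc(p)$ and the clause-(4) implication are inherited directly from $\bar{q}\leq^*\Phi_{\bar{E}}^*(p)$, giving clauses (3),(4) of Definition 3.11 for $p^*\leq^* p$.

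Two things remain, and here lies the main work. First, for each new coordinate $\bar{\gamma}\in\supp(\bar{q})\setminus\supp(p)$ I must choose a legitimate value $p^{*,\bar{\gamma}}$ — a $^0$-increasing finite sequence of extender sequences with $\max\kappa(p^{*,\bar{\gamma}})$ inaccessible and $\geq\kappa(p^0)$ — whose top satisfies $\max\kappa^0(p^{*,\bar{\gamma}})>\bigcup\bigcup j_{\bar{E}}(f^p)(\kappa(\mc(p)))$, so that clause (5) of Definition 3.11 holds; since this bound is below $\kappa$, such values exist in abundance. Second, I must shrink the tree to a set $T^{p^*}\in\bar{E}_{\mc(p^*)}$ with $T^{p^*}\subseteq T^{\bar{q}}$ so that $p^*$ is a genuine condition and $p^*\leq^* p$: concretely, $T^{p^*}$ must satisfy the counting clause (7) and the injectivity clause (8) of Definition 3.9 for the enlarged support, and the projection-commutation clause (7) of Definition 3.11 for the old coordinates. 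The commutation and injectivity requirements each fail only off a measure-one set — for distinct $\bar{\beta},\bar{\gamma}$ the maps $\pi_{\mc(p^*),\bar{\beta}}$ and $\pi_{\mc(p^*),\bar{\gamma}}$ disagree $\bar{E}_{\mc(p^*)}$-almost everywhere, and the composition identity $\pi_{\mc(p^*),\bar{\gamma}}=\pi_{\mc(p),\bar{\gamma}}\circ\pi_{\mc(p^*),\mc(p)}$ holds $\bar{E}_{\mc(p^*)}$-almost everywhere — so they are secured by removing null sets and taking a diagonal intersection, exactly the normality-and-completeness argument that shows $\mathbb{P}^*_{\bar{E}}$ is well defined. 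For the counting clause I enumerate the new coordinates and assign their values with strictly increasing tops $\max\kappa^0(p^{*,\bar{\gamma}})$; then each $\bar{\nu}\in T^{p^*}$ is permitted to only an initial segment of the new coordinates, of size $\leq\kappa^0(\bar{\nu})$, which together with the bound already valid for $p$ keeps the total $\leq\kappa^0(\bar{\nu})$. Since $T^{\bar{q}}\subseteq\pi_{\mc(\bar{q}),\mc(p)}^{-1''}T^p$ by clause (5) of $\bar{q}\leq^*\Phi_{\bar{E}}^*(p)$, any such $T^{p^*}\subseteq T^{\bar{q}}$ also witnesses clause (6) of Definition 3.11.

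Finally I read off $\Phi_{\bar{E}}^*(p^*)\leq^*\bar{q}$: by construction $\supp(\Phi_{\bar{E}}^*(p^*))=\supp(\bar{q})$, $(\Phi_{\bar{E}}^*(p^*))^0=p^0=\bar{q}^0$, $\mc(\Phi_{\bar{E}}^*(p^*))=\mc(\bar{q})$, and $T^{\Phi_{\bar{E}}^*(p^*)}=T^{p^*}\subseteq T^{\bar{q}}$, which are precisely the five clauses of Definition 3.21 (the projection $\pi_{\mc(p^*),\mc(\bar{q})}$ being the identity since $\mc(p^*)=\mc(\bar{q})$). I expect the only delicate point to be the simultaneous tree-shrinking for clauses (7),(8) of Definition 3.9 across a support of size up to $\kappa$; this is handled by the same diagonal-intersection bookkeeping used in the original verification that conditions of $\mathbb{P}_{\bar{E}}$ exist, and can be quoted from there. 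Alternatively, using the identification $\sim$ one may first replace $\bar{q}$ by a $\sim$-equivalent condition with support $\supp(p)$, avoiding the enlargement of the support altogether and reducing the work to the injectivity- and commutation-preserving shrinking.
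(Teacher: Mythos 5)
Your proof follows the same route as the paper's: clauses (1) and (2) are dismissed as routine, and for clause (3) you lift $\bar q$ to a condition $p^*$ with $\supp(p^*)=\supp(\bar q)$, $\mc(p^*)=\mc(\bar q)$, old coordinates unchanged, a tree refined below $T^{\bar q}$, and $f^{p^*},F^{p^*}$ pulled back from $f^p,F^p$ along $\pi_{\mc(p^*),\mc(p)}$ --- exactly the condition the paper writes down and then declares ``easily verified.'' Your additional detail (choice of values at new coordinates, the diagonal-intersection shrinking for the counting, injectivity and commutation clauses) correctly fills in what the paper leaves implicit, so the argument is the same one, just spelled out.
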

\begin{proof}
Parts $(1)$ and $(2)$ of Definition 2.1 are evident, so let's prove part $(3)$ of the definition. Let
\begin{center}
$p = \{ \langle \bar{\gamma}, p^{\bar{\gamma}}\rangle: \bar{\gamma} \in \supp(p) \} \cup \{\langle \mc(p), T^p, f^p, F^p \rangle  \} \in \mathbb{P}^*_{\bar{E}}$
\end{center}
and
\begin{center}
$q = \{ \langle \bar{\gamma}, q^{\bar{\gamma}}\rangle: \bar{\gamma} \in \supp(q) \} \cup \{\langle \mc(q), T^q \rangle  \} \in \mathbb{Q}^*_{\bar{E}},$
\end{center}
where $q \leq^* \Phi_{\bar{E}}^*(p).$ Further we suppose that $\mc(q) >_{\bar{E}} \mc(\Phi_{\bar{E}}^*(p)).$ Note that $\supp(\Phi_{\bar{E}}^*(p))=\supp(p)$, $\mc(\Phi_{\bar{E}}^*(p))=\mc(p)$ and $T^{\Phi_{\bar{E}}^*(p)}=T^p$, so we have
\begin{enumerate}
\item $\supp(q) \supseteq \supp(p),$
\item $mc(p) \in \supp(q),$
\item $T^q \leq \pi^{-1''}_{\mc(q), \mc(p)} T^p$.
\end{enumerate}
Let $p^*$ be any condition in  $\mathbb{P}^*_{\bar{E}}$ such that:
\begin{itemize}
\item [(4)] $\supp(p^*)=\supp(q)$ and $\mc(p^*)=\mc(q),$
\item [(5)] For $\bar{\gamma} \in \supp(p),$ $(p^*)^{\bar{\gamma}}=p^{\bar{\gamma}},$
\item [(6)] $T^{p^*} \leq T^q,$
\item [(7)] $f^{p^*}=f^p \upharpoonright \{\bar{\nu} \in T^{p^*}: \len(\bar{\nu})=0   \},$
\item [(8)] $F^{p^*}=F^p \upharpoonright \{\langle \bar{\nu}_1, \bar{\nu}_2 \rangle \in (T^{p^*})^2: \len(\bar{\nu}_1)=\len(\bar{\nu}_2)=0   \}.$
\end{itemize}
It is now easily verified that $p^* \leq^* p$ and $\Phi_{\bar{E}}^*(p^*) \leq^* q.$
The lemma follows.
\end{proof}
We are now ready to define the  projection  $\Phi: \mathbb{P}_{\bar{E}} \rightarrow \mathbb{Q}_{\bar{E}}$.
Thus suppose that $p \in \mathbb{P}_{\bar{E}},$ say
\begin{center}
$p=p_{n} ^{\frown} \dots ^{\frown} p_1 ^{\frown} p_{0} $
\end{center}
where
\begin{itemize}
\item $p_{0} \in \mathbb{P}_{\bar{E}}^{*}, \kappa^{0}(p_{0}^{0}) \geq \kappa^{0}(\bar{\mu}_{1}),$
\item $p_{1} \in \mathbb{P}_{\bar{\mu}_{1}}^{*}, \kappa^{0}(p_{1}^{0}) \geq \kappa^{0}(\bar{\mu}_{2}),$

$\vdots$

\item $p_{n} \in \mathbb{P}_{\bar{\mu}_{n}}^{*}.$
\end{itemize}
and $\langle \bar{\mu}_{n}, ..., \bar{\mu}_{1}, \bar{E} \rangle$ is a $^{0}-$inceasing sequence of extender sequence systems. Set
\begin{center}
$\Phi(p)=\Phi_{\bar{\mu}_n}^*(p_{n}) ^{\frown} \dots ^{\frown} \Phi_{\bar{\mu}_1}^*(p_{n}) ^{\frown} \Phi_{\bar{E}}^*(p_{0}).$
\end{center}
It is easily seen that $\Phi(p) \in \mathbb{Q}_{\bar{E}},$ and that
$\Phi: \mathbb{P}_{\bar{E}} \rightarrow \mathbb{Q}_{\bar{E}}$ is well-defined.
\begin{lemma}
$\Phi$ is a projection  from $(\mathbb{P}_{\bar{E}}, \leq^*)$ into $(\mathbb{Q}_{\bar{E}}, \leq^*).$
\end{lemma}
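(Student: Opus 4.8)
The plan is to verify the three clauses of Definition 2.1 for $\Phi$ relative to the Prikry orderings $\leq^*$, reducing each to the coordinatewise statement just established for the maps $\Phi^*_{\bar\mu}$. Clauses (1) and (2) are immediate. The top condition of $\mathbb{P}_{\bar{E}}$ is a single block equal to $1_{\mathbb{P}^*_{\bar{E}}}$, so $\Phi(1_{\mathbb{P}_{\bar{E}}})=\Phi^*_{\bar{E}}(1_{\mathbb{P}^*_{\bar{E}}})=1_{\mathbb{Q}^*_{\bar{E}}}=1_{\mathbb{Q}_{\bar{E}}}$ by clause (1) of the previous lemma. For (2), if $p\leq^* q$ in $\mathbb{P}_{\bar{E}}$, then by the definition of $\leq^*$ on $\mathbb{P}_{\bar{E}}$ the two conditions have the same number of blocks and $p_i\leq^* q_i$ in the appropriate $\mathbb{P}^*_{\bar\mu_i}$; the previous lemma gives $\Phi^*_{\bar\mu_i}(p_i)\leq^*\Phi^*_{\bar\mu_i}(q_i)$ for each $i$, and the definition of $\leq^*$ on $\mathbb{Q}_{\bar{E}}$ then yields $\Phi(p)\leq^*\Phi(q)$.

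The content is clause (3). Fix $p=p_n{}^{\frown}\cdots{}^{\frown}p_0\in\mathbb{P}_{\bar{E}}$ and $q\in\mathbb{Q}_{\bar{E}}$ with $q\leq^*\Phi(p)$, writing $\bar\mu_0=\bar{E}$. Since $\Phi$ acts blockwise, $\Phi(p)$ has the same $n+1$ blocks as $p$ (with the same associated systems, because $\Phi^*_{\bar\mu_i}$ preserves $\mc$), and since $\leq^*$ on $\mathbb{Q}_{\bar{E}}$ never alters the number of blocks, $q$ decomposes as $q=q_n{}^{\frown}\cdots{}^{\frown}q_0$ with $q_i\leq^*\Phi^*_{\bar\mu_i}(p_i)$ in $\mathbb{Q}^*_{\bar\mu_i}$ for each $i$. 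I now invoke the previous lemma blockwise: as $\Phi^*_{\bar\mu_i}$ is a projection of $(\mathbb{P}^*_{\bar\mu_i},\leq^*)$ into $(\mathbb{Q}^*_{\bar\mu_i},\leq^*)$ and $q_i\leq^*\Phi^*_{\bar\mu_i}(p_i)$, there is $p_i^*\leq^* p_i$ in $\mathbb{P}^*_{\bar\mu_i}$ with $\Phi^*_{\bar\mu_i}(p_i^*)\leq^* q_i$. Set $p^*=p_n^*{}^{\frown}\cdots{}^{\frown}p_0^*$.

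It remains to confirm that $p^*$ is a genuine condition of $\mathbb{P}_{\bar{E}}$ and that the two required inequalities hold, and this is where the only non-mechanical point arises. A Prikry extension in $\mathbb{P}^*_{\bar\mu}$ fixes the stem coordinate: clause (2) of the definition of $\leq^*$ on $\mathbb{P}^*_{\bar\mu}$ forces $(p_i^*)^{\bar\gamma}=p_i^{\bar\gamma}$ for all $\bar\gamma\in\supp(p_i)$, and since the minimal element of $\bar\mu_i$ always lies in $\supp(p_i)$ we get $(p_i^*)^0=p_i^0$, hence $\kappa^0((p_i^*)^0)=\kappa^0(p_i^0)$. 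Therefore the ${}^{0}$-increasing constraints $\kappa^0((p_i^*)^0)\geq\kappa^0(\bar\mu_{i+1})$ linking consecutive blocks are inherited verbatim from $p$, so $p^*\in\mathbb{P}_{\bar{E}}$. Because each $p_i^*\leq^* p_i$, the definition of $\leq^*$ on $\mathbb{P}_{\bar{E}}$ gives $p^*\leq^* p$; and because each $\Phi^*_{\bar\mu_i}(p_i^*)\leq^* q_i$, the definition of $\leq^*$ on $\mathbb{Q}_{\bar{E}}$ gives $\Phi(p^*)\leq^* q$, which is exactly clause (3). The main obstacle is precisely the verification that these block-linking constraints survive the passage to $p^*$; once the stem coordinates are seen to be preserved, everything else is a blockwise transcription of the previous lemma.
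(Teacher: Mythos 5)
Your proof is correct and follows the same route as the paper, which disposes of this lemma in one line by citing the previous lemma together with the blockwise definition of $\leq^*$; you have simply written out the coordinatewise reduction in full. Your extra check that the constraints $\kappa^0((p_i^*)^0)\geq\kappa^0(\bar\mu_{i+1})$ survive because $\leq^*$ fixes the stem coordinates is a detail the paper leaves implicit, and it is verified correctly.
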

\begin{proof}
By Lemma 8.1 and the definition of $\leq^*$ relation.
\end{proof}
To prove the main result, namely that $\Phi$ is a  Prikry type projection, we need the following simple observation.
\begin{lemma}
Assume $p \in \mathbb{P}_{\bar{E}}$ and  $\bar{\nu} \in T^{p}$ is such that $p_{\langle \bar{\nu}  \rangle}$ is well-defined (cf. Definition 6.5). Then
\[
\Phi(p_{\langle \bar{\nu}  \rangle})=\Phi(p)_{_{\langle \bar{\nu}  \rangle}}.
\]
\end{lemma}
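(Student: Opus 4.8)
The plan is to verify directly that both sides of the claimed equality are one-element extensions built from the same data, so it suffices to trace the single coordinate where $\bar{\nu}$ is inserted and check the definitions of $\Phi$ (subsection 3.6) and of one-element extension in $\MQB_{\bar E}$ (Definition 6.5 for $\MQB$) agree after applying $\Phi$ to the $\MPB_{\bar E}$-version (Definition 5.x for $\MPB$). First I would write $p = p_n \append \dots \append p_0$ and recall that $\bar\nu \in T^p$ means $\bar\nu \in T^{p_0}$, so the one-point extension $p_{\langle \bar\nu \rangle}$ modifies only the bottom block $p_0$, producing $(p_{\langle\bar\nu\rangle})_1 \append (p_{\langle\bar\nu\rangle})_0$ and leaving $p_n, \dots, p_1$ untouched. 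Since $\Phi$ acts block-by-block as $\Phi(p) = \Phi^*_{\bar\mu_n}(p_n) \append \dots \append \Phi^*_{\bar E}(p_0)$, and the upper blocks are unchanged by the extension, the whole problem reduces to showing
\[
\Phi^*_{\bar E}\bigl((p_{\langle\bar\nu\rangle})_1\bigr) \append \Phi^*_{\bar E}\bigl((p_{\langle\bar\nu\rangle})_0\bigr) = \bigl(\Phi^*_{\bar E}(p_0)\bigr)_{\langle\bar\nu\rangle},
\]
i.e.\ that forming the one-point extension and then projecting gives the same result as projecting and then forming the one-point extension, at the level of the starred forcings.

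Next I would split into the two cases $\len(\bar\nu) > 0$ and $\len(\bar\nu) = 0$, matching the case division in Definitions 6.5 and 6.13. For the $\len(\bar\nu)>0$ case, I would compute the left-hand side: $\Phi^*_{\bar E}$ sends the lower part $(p_{\langle\bar\nu\rangle})_0$ to the $\MQB^*_{\bar E}$-condition with top coordinate $\pi_{\mc(p),0}(\bar\nu)$ (matching clause (2) of Definition 6.13 via the $\Phi^*$ rule $\Phi^*(p^0)=p^0$ and $\Phi^*(p^{\bar\gamma})=\langle\rangle$ for $\bar\gamma \neq \bar E_\kappa$), with $T^{p_0} = T^p \setminus \bar\nu$, and sends the upper part $(p_{\langle\bar\nu\rangle})_1$ to the condition with measure one coordinate $\mc(p_1) = \bar\nu$ and trace $T^p \restricted \bar\nu$. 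For the right-hand side I would apply Definition 6.13 directly to $\Phi^*_{\bar E}(p_0)$, which has the same $\mc$, the same $T$, and the same $p^0 = p^{\bar E_\kappa}$ coordinate, so that $(\Phi^*_{\bar E}(p_0))_{\langle\bar\nu\rangle}$ produces exactly the same $\bar\nu$-indexed data. The key point is that every ingredient of Definition 6.13 refers only to $\mc(p)$, $T^p$, $p^0$, and the projection maps $\pi_{\mc(p),\bar\gamma}$, all of which are preserved verbatim by $\Phi^*_{\bar E}$; the extra coordinates $p^{\bar\gamma}$ (for $\bar\gamma \neq \bar E_\kappa$) that $\Phi^*$ discards never enter the $\MQB$-side extension anyway. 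The $\len(\bar\nu)=0$ case is entirely analogous, matching clause (8) with the collapsing functions $f,F$ simply dropped on both sides.

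The only genuinely delicate point, and the one I would present most carefully, is the \emph{side condition} in Definition 6.5 requiring $\kappa^0(\bar\nu) > \bigcup\bigcup j_{\bar E}(f^{p,\Col})(\kappa(\mc(p)))$ for $p_{\langle\bar\nu\rangle}$ to be defined in $\MPB_{\bar E}$, whereas the $\MQB_{\bar E}$-side extension in Definition 6.13 carries no such constraint (there is no $f$-part in $\MQB^*_{\bar E}$). Thus the hypothesis ``$p_{\langle\bar\nu\rangle}$ is well-defined'' guarantees the left-hand side exists, and I must observe that the right-hand side $\Phi(p)_{\langle\bar\nu\rangle}$ is automatically well-defined because $\bar\nu \in T^p = T^{\Phi(p)}$ suffices in $\MQB_{\bar E}$. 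This asymmetry is harmless for the equation itself, since the side condition only affects \emph{when} the $\MPB$-extension is legal, not \emph{what} it equals once legal. Having checked the bottom-block identity in both length cases and recorded that the upper blocks and the domain conditions cause no trouble, the equality $\Phi(p_{\langle\bar\nu\rangle}) = \Phi(p)_{\langle\bar\nu\rangle}$ follows by definition-chasing, and I would conclude the proof there.
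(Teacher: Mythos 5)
Your proof is correct and follows essentially the same route as the paper's: reduce to the bottom block(s) of the condition, compute $\Phi$ of the two pieces $(p_{\langle\bar\nu\rangle})_1$ and $(p_{\langle\bar\nu\rangle})_0$ explicitly, and match them against the two pieces of $\Phi(p)_{\langle\bar\nu\rangle}$, splitting into the cases $\len(\bar\nu)>0$ and $\len(\bar\nu)=0$. Your closing observation about the one-sided well-definedness condition (the $f^{p,\Col}$ constraint present only on the $\mathbb{P}_{\bar E}$ side) is a point the paper leaves implicit, but it does not change the substance of the argument.
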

\begin{proof}
Write $p_{\langle \bar{\nu}  \rangle}=p_1^{\frown} p_0,$ where $p_0, p_1$ are defined as in Definition 3.13. Then by definition of $\Phi,$ we have
\[
\Phi(p_{\langle \bar{\nu}  \rangle})=\Phi(p_1) ^{\frown} \Phi(p_0).
\]
Also let
\[
\Phi(p)=\{ \langle \bar{\gamma}, \Phi^*_{\bar{E}}(p^{\bar{\gamma}}) \rangle: \bar{\gamma} \in \supp(p)  \} \cup \{ \langle \bar{E}_{\mc(p)}, T^{p} \rangle  \}
\]
and write $\Phi(p)_{_{\langle \bar{\nu}  \rangle}}=q_1 ^{\frown} q_0$ as in Definition 3.22.
So it suffices to show that $\Phi(p_i)=q_i, i=0,1.$
\\
{\bf $\underline{\Phi(p_0)=q_0}$:} We have
\[
\Phi(p_0)=\{ \langle \bar{\gamma}, \Phi^*_{\bar{E}}(p_0^{\bar{\gamma}}) \rangle: \bar{\gamma} \in \supp(p)  \} \cup \{ \langle \bar{E}_{\mc(p)}, T^{p} \setminus \bar{\nu} \rangle  \},
\]
where $\Phi^*_{\bar{E}}(p_0^0)=p_0^0=\pi_{\mc(p),0}(\bar{\nu}),$ and $\Phi^*_{\bar{E}}(p_0^{\bar{\gamma}})=\langle \rangle,$ for all $\bar{\gamma} \neq \bar{E}_\kappa.$ It is now evident from the Definition 3.22 that $\Phi(p_0)=q_0.$
\\
{\bf $\underline{\Phi(p_1)=q_1}$:}
We consider two cases.
\begin{enumerate}
\item  $\underline{\len(\nu)> 0}$:
We have
\[
\Phi(p_1)=\{ \langle \pi_{\mc(p),\bar{\gamma}}(\bar{\nu}), \Phi^*_{\bar{E}}(p_1^{\pi_{\mc(p),\bar{\gamma}}(\bar{\nu})}) \rangle: \bar{\gamma} \in \supp(p)  \} \cup \{ \langle \bar{\nu}, T^{p} \upharpoonright \bar{\nu} \rangle  \},
\]
where $\Phi^*_{\bar{E}}(p_1^{\pi_{\mc(p),0}(\bar{\nu})})=p^0,$ and $\Phi^*_{\bar{E}}(p_1^{\pi_{\mc(p),\bar{\gamma}}(\bar{\nu})})=\langle \rangle,$ for all $\bar{\gamma} \neq \bar{E}_\kappa.$ It is now evident from the Definition 3.22 that $\Phi(p_1)=q_1.$

\item $\underline{\len(\nu)= 0}$: Then
\[
\Phi(p_1)=\{ \langle \pi_{\mc(p),0}(\bar{\nu}), \Phi^*_{\bar{E}}(p^0) \rangle  \} \cup \{ \langle \bar{\nu}^0, \emptyset \rangle  \}=
\{ \langle \pi_{\mc(p),0}(\bar{\nu}), p^0 \rangle  \} \cup \{ \langle \bar{\nu}^0, \emptyset \rangle  \},
\]
and by Definition 3.22, it is again equal to $q_1$.
\end{enumerate}
\end{proof}

\begin{theorem}
$\Phi$ is a projection of Prikry type from $(\mathbb{P}_{\bar{E}}, \leq, \leq^*)$ into $(\mathbb{Q}_{\bar{E}}, \leq, \leq^*).$
\end{theorem}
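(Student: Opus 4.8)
The goal is to verify the three conditions in Definition 2.4 for the map $\Phi$. The plan is to leverage the work already done: Lemma 3.25 establishes that $\Phi$ is a projection with respect to the $\leq^*$ relations, which gives us condition $(2)$ of Definition 2.4 directly, and goes a long way toward condition $(1)$. The genuinely new content is condition $(1)$ for the full orderings $\leq$ (not just $\leq^*$) and condition $(3)$, the Prikry-type lifting property.

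First I would handle condition $(1)$, that $\Phi$ is order preserving and a projection for $\leq_{\mathbb{P}_{\bar E}}$ into $\leq_{\mathbb{Q}_{\bar E}}$. Since every extension decomposes via Definition 3.16 into a finite sequence of $1$-point extensions, and each $1$-point extension (Definition 3.15) is built from Prikry extensions $\leq^*$ together with a single one-element extension $q_{\langle \bar\nu \rangle}$, it suffices to check compatibility of $\Phi$ with these two building blocks. For the $\leq^*$ part this is exactly Lemma 3.25 (lifted to $\mathbb{P}_{\bar E}, \mathbb{Q}_{\bar E}$ by Lemma 3.26). For the one-element extension, the key commutation identity is precisely Lemma 3.27, namely $\Phi(p_{\langle \bar\nu \rangle}) = \Phi(p)_{\langle \bar\nu \rangle}$. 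Thus a $1$-point extension $p \leq^1 q$ in $\mathbb{P}_{\bar E}$ maps under $\Phi$ to a $1$-point extension $\Phi(p) \leq^1 \Phi(q)$ in $\mathbb{Q}_{\bar E}$, and by induction on $n$ an $n$-point extension maps to an $n$-point extension, giving order preservation. The back-extension property (condition (3) of Definition 2.1 for $\leq$) will follow similarly: given $q \leq_{\mathbb{Q}_{\bar E}} \Phi(p)$, decompose the witnessing chain into $1$-point steps and build the corresponding $p^* \leq_{\mathbb{P}_{\bar E}} p$ step by step, using Lemma 3.25 at the $\leq^*$ stages and choosing the same measure-one set coordinates $\bar\nu$ at the one-point stages, then filling in the $f$ and $F$ data on $\mathbb{P}_{\bar E}$ freely (as in the choice of $p^*$ in the proof of Lemma 3.25).

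The main work, and the heart of the theorem, is condition $(3)$ of Definition 2.4: given $p \in \mathbb{P}_{\bar E}$ and $q \in \mathbb{Q}_{\bar E}$ with $q \leq_{\mathbb{Q}_{\bar E}} \Phi(p)$, I must produce $p^* \leq_{\mathbb{P}_{\bar E}} p$ with $\Phi(p^*) \leq^*_{\mathbb{Q}_{\bar E}} q$. The point is that although $q$ may be obtained from $\Phi(p)$ by genuine (non-$\leq^*$) point extensions, I can \emph{absorb} these point extensions on the $\mathbb{P}_{\bar E}$ side first. Concretely, if $q \leq^n_{\mathbb{Q}_{\bar E}} \Phi(p)$, read off from the witnessing chain the sequence of points $\bar\nu_1, \dots, \bar\nu_n$ that were added, form the corresponding multiple one-element extension $p_{\langle \bar\nu_1, \dots, \bar\nu_n \rangle}$ of $p$ (well-defined because the relevant $\bar\nu_i$ lie in the appropriate trees, which are preserved by $\Phi$ since $T^{\Phi(p)} = T^p$), and apply Lemma 3.27 repeatedly to see that $\Phi(p_{\langle \bar\nu_1, \dots, \bar\nu_n \rangle}) = \Phi(p)_{\langle \bar\nu_1, \dots, \bar\nu_n \rangle}$, which now differs from $q$ only by a $\leq^*$-extension. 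At this point the problem reduces to the $\leq^*$ case, where Lemma 3.25 applies: I find $p^* \leq^* p_{\langle \bar\nu_1, \dots, \bar\nu_n \rangle}$ with $\Phi(p^*) \leq^* q$. Since $p^* \leq p$, this is the desired condition.

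The step I expect to be the main obstacle is the bookkeeping in condition $(3)$: matching the \emph{coherence} requirements between the point-extension data on the two forcings. On $\mathbb{P}_{\bar E}$, a one-element extension carries nontrivial $f$ and $F$ functions and must satisfy the permittedness and injectivity constraints $(7)$–$(8)$ of Definition 3.9 and the side conditions of Definition 3.13, whereas on $\mathbb{Q}_{\bar E}$ these are stripped away. I must check that the points $\bar\nu_i$ witnessing $q \leq \Phi(p)$ can genuinely be used to form $p_{\langle \bar\nu_1, \dots, \bar\nu_n \rangle}$ on the $\mathbb{P}_{\bar E}$ side — in particular that $\kappa^0(\bar\nu_i)$ exceeds the bound $\bigcup\bigcup j_{\bar E}(f^{p,\Col})(\kappa(\mathrm{mc}(p)))$ required in Definition 3.13, which may force an initial $\leq^*$-shrinking of the tree $T^p$ (legitimate, as it only refines the condition) before absorbing the points. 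Once one sees that the only obstruction is this bound and that it is handled by a harmless preliminary Prikry extension, the rest is the mechanical reduction to Lemma 3.25 described above.
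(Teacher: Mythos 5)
Your proposal takes essentially the same route as the paper's own proof: check $\Phi(1_{\mathbb{P}_{\bar{E}}})=1_{\mathbb{Q}_{\bar{E}}}$, reduce both order preservation and the Prikry-type back-extension property to the case of a single $1$-point extension via the decomposition of $\leq$ into $\leq^{1}$-steps, absorb the added point on the $\mathbb{P}_{\bar{E}}$ side using the commutation identity $\Phi(p_{\langle \bar{\nu} \rangle})=\Phi(p)_{\langle \bar{\nu} \rangle}$ (the paper's Lemma 3.28; your ``Lemma 3.27''), and finish with the $\leq^{*}$-projection lemma applied componentwise. On the main line your argument and the paper's coincide.

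The one place you go beyond the paper is the well-definedness worry you raise at the end, and there your patch does not work as stated. If the witness $\bar{\nu}\in T^{p}=T^{\Phi(p)}$ for $q\leq^{1}\Phi(p)$ has $\kappa^{0}(\bar{\nu})\leq\bigcup\bigcup j_{\bar{E}}(f^{p,\Col})(\kappa(\mc(p)))$, then an ``initial $\leq^{*}$-shrinking of $T^{p}$'' performed after $q$ is given only deletes $\bar{\nu}$ from the tree; it does not make $p_{\langle\bar{\nu}\rangle}$ well-defined, and the given $q$ is no longer below the image of the shrunken condition, so nothing has been gained. In fact no $p^{*}\leq p$ can ever add such a $\bar{\nu}$ (the bound in Definition 3.13 only increases along extensions, and the block bottoms of $\Phi(p^{*})$ must include those of $q$), so for such a $q$ the back-extension clause fails as literally stated. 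The paper's proof has exactly the same hole --- it writes $p_{\langle\bar{\nu}\rangle}$ without verifying the hypothesis of Definition 3.13 --- so you are not worse off for having noticed it; but the correct repair is to do the shrink \emph{before} the argument starts: the set of $\bar{\nu}$ above the bound is measure one for every measure on the sequence, hence the conditions all of whose tree elements satisfy the bound are $\leq^{*}$-dense in $\mathbb{P}_{\bar{E}}$, and the theorem should be read (and is only ever used) on that dense set, where every $\bar{\nu}\in T^{p}$ is addable and your reduction to the $\leq^{*}$-projection lemma goes through verbatim.
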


\begin{proof}
We prove the theorem in steps.
\begin{itemize}
\item We have
\[
\Phi(1_{\mathbb{P}_{\bar{E}}})= \Phi^*_{\bar{E}}(1_{\mathbb{P}^*_{\bar{E}}})= 1_{\mathbb{Q}^*_{\bar{E}}}  =1_{\mathbb{Q}_{\bar{E}}},
\]
and so $\Phi$ sends $1_{\mathbb{P}_{\bar{E}}}$ to $1_{\mathbb{Q}_{\bar{E}}}$.

\item To show $\Phi$ is order preserving, let $p \leq q$ in $\mathbb{P}_{\bar{E}}.$
So we can find $n<\omega$ with $p \leq^n q.$
If $n=0,$ then $p \leq^* q,$ and the result follows from the definition of $\leq^*$ relation and Lemma 3.26.
So let's assume $n>0.$ We consider the case $n=1,$ as it is enough general to give the main idea of the general case.
Then $p \leq^1 q,$ which means
$p = p^1$$^{\frown} p^0,$ and there exists some $\bar{\nu} \in T^{q}$ such that
$p  \leq^* q_{\langle \bar{\nu}  \rangle}$. Let $q_{\langle \bar{\nu}  \rangle}=q^1$$^{\frown} q^0.$ Also
let $\bar{\mu}$ be such that $p^1, q^1 \in \mathbb{P}_{\bar{\mu}}.$ Then

$\hspace{1.cm}$ $\Phi(p)=\Phi^*_{\bar{\mu}}(p^1)^{\frown} \Phi^*_{\bar{E}}(p^0)$

$\hspace{1.8cm}$ $ \leq^* \Phi^*_{\bar{\mu}}(q^1)^{\frown} \Phi^*_{\bar{E}}(q^0)$

$\hspace{1.8cm}$ $=\Phi(q_{\langle \bar{\nu}  \rangle})$

$\hspace{1.8cm}$ $=\Phi(q)_{_{\langle \bar{\nu}  \rangle}}$ (by Lemma 3.28)

Hence $\Phi(p) \leq \Phi(q).$

\item Now let $p\in \mathbb{P}_{\bar{E}},$ $q \in \mathbb{Q}_{\bar{E}}$ and suppose that $q \leq \Phi(p).$
Let $n$ be such that $q \leq^n \Phi(p).$ If $n=0$, then by applying Lemma 3.26, we can find $p^* \leq^* p $ with $\Phi(p^*) \leq^* q.$
Suppose $n>0.$ As above, we consider the case $n=1;$ the general case can be proved similarly. We have $q \leq^1 \Phi(p),$
so $q = q^1$$^{\frown} q^0,$ and there exists some $\bar{\nu} \in T^{\Phi(p)}=T^p$ such that
$q  \leq^* \Phi(p)_{\langle \bar{\nu}  \rangle}$

Let $p_{\langle \bar{\nu}  \rangle}=p_{\langle \bar{\nu}  \rangle}^1$$^{\frown} p_{\langle \bar{\nu}  \rangle}^0,$ and note that we have
 \begin{center}
 $\Phi(p)_{\langle \bar{\nu}  \rangle}=\Phi(p_{\langle \bar{\nu}  \rangle})=\Phi^*_{\bar{\mu}}(p_{\langle \bar{\nu}  \rangle}^1)$$^{\frown} \Phi^*_{\bar{E}}(p_{\langle \bar{\nu}  \rangle}^0).$
\end{center}
where $\bar{\mu}$ is such that $q^1, \Phi(p_{\langle \bar{\nu}  \rangle}^1) \in \mathbb{P}_{\bar{\mu}}.$

Then $q^1 \leq^* \Phi(p_{\langle \bar{\nu}  \rangle}^1)$ and $q^0 \leq^* \Phi(p_{\langle \bar{\nu}  \rangle}^0)$,  so by Lemma 3.27, we can find $(p^*)^1 \in \mathbb{P}_{\bar{\mu}}$ and $(p^*)^0 \in \mathbb{P}_{\bar{E}}$ such that
\begin{enumerate}
\item $(p^*)^1 \leq^* p_{\langle \bar{\nu}  \rangle}^1$,
\item $(p^*)^0 \leq^* p_{\langle \bar{\nu}  \rangle}^0,$
\item $\Phi((p^*)^1) \leq^* q^1,$
\item $\Phi((p^*)^0) \leq^* q^0.$
\end{enumerate}
Let $p^* =(p^*)^1$$^{\frown} (p^*)^0.$ Then $p^* \leq^* p_{\langle \bar{\nu}  \rangle},$ so in particular $p^* \leq p.$ Also we have
\[
\Phi(p^*)=\Phi^*_{\bar{\mu}}((p^*)^1) ^{\frown} \Phi^*_{\bar{E}}((p^*)^0) \leq^* q^1~^{\frown} q^0=q.
\]
The result follows
\end{itemize}

\end{proof}

\subsection{More on $\mathbb{Q}_{\bar{E}}$}
We now  return to the forcing $\mathbb{Q}_{\bar{E}}$ and prove some of its basic properties.
\begin{lemma}
$(\mathbb{Q}^*_{\bar{E}}, \leq^*)$ is $\kappa^+$-c.c.
\end{lemma}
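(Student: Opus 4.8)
The plan is to show that any antichain in $(\mathbb{Q}^*_{\bar{E}}, \leq^*)$ has size at most $\kappa$. The key simplification is the remark immediately preceding the statement: after identifying $\sim$-equivalent conditions, a condition $p \in \mathbb{Q}^*_{\bar{E}}$ is essentially determined by just three pieces of data, namely $\langle \bar{E}_\kappa, p^{\bar{E}_\kappa} \rangle$, its maximal coordinate $\mc(p)$, and its top measure-one set $T^p$. First I would examine the $\leq^*$-order on $\mathbb{Q}^*_{\bar{E}}$ (Definition 3.17) to understand when two conditions are $\leq^*$-compatible. Since $p \leq^* q$ requires $p^0 = q^0$, $\mc(p) \geq_{\bar{E}} \mc(q)$, and $T^p \leq \pi^{-1 ''}_{\mc(p), \mc(q)} T^q$, two conditions $p, q$ are $\leq^*$-compatible precisely when $p^0 = q^0$ and their maximal coordinates are $\leq_{\bar{E}}$-comparable-enough to admit a common $\leq^*$-lower bound whose top set is contained in the appropriate preimages of both $T^p$ and $T^q$.

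The heart of the argument is a counting bound. There are at most $\kappa$ possibilities for $p^0$, since by clause (2) of Definition 3.16 we have $p^0 \in V^*_{\kappa^0(\bar{E})}$ and $\kappa^0(\bar{E}) = \kappa$, so under $GCH$ in $V^*$ (which we have, and which $\mathbb{P}$ preserves below $\kappa$) there are only $\kappa$-many extender sequences below $\kappa$. Next I would argue that once $p^0$ is fixed, any two conditions sharing that $p^0$ are $\leq^*$-compatible. The natural way to see this is to take two conditions $p, q$ with $p^0 = q^0$, pick $\bar{E}_\gamma$ with $\bar{E}_\gamma \geq_{\bar{E}} \mc(p), \mc(q)$ (such a common upper bound exists because $\mc(p), \mc(q) \in \dom(\bar{E})$ and the index set is directed under $\geq_{\bar{E}}$), and form a common extension $r$ with $\mc(r) = \bar{E}_\gamma$, $r^0 = p^0$, and $T^r = \pi^{-1 ''}_{\bar{E}_\gamma, \mc(p)} T^p \cap \pi^{-1 ''}_{\bar{E}_\gamma, \mc(q)} T^q$, which lies in $\bar{E}_\gamma$ because $\bar{E}_\gamma$ is $\kappa$-complete (indeed closed under the relevant projections). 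This would show that $\leq^*$-incompatibility forces distinct values of $p^0$, whence every $\leq^*$-antichain has size at most $\kappa < \kappa^+$.

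I expect the main obstacle to be verifying that the proposed common extension $r$ genuinely satisfies all clauses of Definition 3.16 and lies below both $p$ and $q$ in the $\leq^*$-order, in particular that $T^r \in \bar{E}_\gamma$ and that the projection compatibility in clause (5) of Definition 3.17 holds for both $p$ and $q$ simultaneously. The point is that the commutativity of the projection maps $\pi_{\bar{E}_\gamma, \mc(p)}$ and $\pi_{\mc(p), 0}$ (inherited from the factor structure of the extender sequence system) must be checked so that $T^r \leq \pi^{-1 ''}_{\mc(r), \mc(p)} T^p$ and likewise for $q$; this is where the directedness of $\geq_{\bar{E}}$ together with $\kappa$-completeness of the measures does the real work. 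Once the common refinement is in hand, the conclusion that the partial order has precisely $\kappa$ pairwise $\leq^*$-incompatible classes — governed entirely by the $p^0$ coordinate — and hence is $\kappa^+$-c.c., is immediate.
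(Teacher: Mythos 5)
Your proof is correct and takes essentially the same route as the paper: the paper's (one-line) argument is precisely that conditions with the same $p^0$ are compatible and that there are only $\kappa$-many possible values of $p^0$. Your elaboration of the common extension via a $\geq_{\bar E}$-upper bound of the maximal coordinates and intersection of the pulled-back measure-one sets is exactly the verification the paper leaves implicit.
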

\begin{proof}
If $p, q \in \mathbb{Q}^*_{\bar{E}}$ are such that $p^0=q^0,$ then $p$ and $q$ are compatible. Now the result follows from the fact that there are only $\kappa$-many $p^0$'s.
\end{proof}
The following is an immediate corollary of the above lemma.
\begin{lemma}
$(\mathbb{Q}_{\bar{E}}, \leq)$ is $\kappa^+$-c.c.
\end{lemma}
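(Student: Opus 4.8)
The plan is to derive the $\kappa^+$-c.c.\ of $(\mathbb{Q}_{\bar{E}}, \leq)$ from Lemma 3.29 by a standard ``number of stems'' argument. Given a condition $p = p_n {}^{\frown} \cdots {}^{\frown} p_0 \in \mathbb{Q}_{\bar{E}}$, I would call its \emph{stem} the tuple $\langle p_n, \dots, p_1, p_0^{0} \rangle$ consisting of the lower components $p_1, \dots, p_n$ in full, together with only the bottom extender sequence $p_0^{0}$ of the top component $p_0$ (thus discarding $\mc(p_0)$ and $T^{p_0}$). The whole argument then rests on two points: first, that any two conditions sharing a stem are compatible, and second, that there are at most $\kappa$ many stems.

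For the first point, if $p$ and $q$ have the same stem then they have equal length and equal lower components $p_i = q_i$ for $1 \le i \le n$, while their top components $p_0, q_0 \in \mathbb{Q}^*_{\bar{E}}$ satisfy $p_0^{0} = q_0^{0}$. The content of Lemma 3.29 is precisely that such $p_0, q_0$ have a common $\leq^*$-lower bound $r_0$: one takes $r_0^{0} = p_0^{0}$, lets $\mc(r_0)$ dominate both $\mc(p_0)$ and $\mc(q_0)$, and lets $T^{r_0}$ be the common pullback of $T^{p_0}$ and $T^{q_0}$ along the projection maps. Since the clause $p^0 = q^0$ in the definition of $\leq^*$ on $\mathbb{Q}^*_{\bar{E}}$ shows that a $\leq^*$-extension leaves the bottom coordinate fixed, the inequality $\kappa^{0}(r_0^{0}) \ge \kappa^{0}(\bar{\mu}_1)$ persists, so $r = p_n {}^{\frown} \cdots {}^{\frown} p_1 {}^{\frown} r_0$ is a genuine condition of $\mathbb{Q}_{\bar{E}}$ with $r \leq^* p$ and $r \leq^* q$; hence $p \parallel q$.

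For the second point, each lower component $p_i$ (for $1 \le i \le n$) is, modulo the equivalence relation $\sim$ on $\mathbb{Q}^*_{\bar{E}}$, coded by the triple $\langle p_i^{0}, \mc(p_i), T^{p_i} \rangle$, and all of this data lies below $\kappa$: since $\kappa^{0}(\bar{\mu}_i) < \kappa^{0}(\bar{E}) = \kappa$ we have $p_i^{0} \in V^*_\kappa$, the system $\bar{\mu}_i$ lies in $V_\kappa$, and $T^{p_i} \subseteq V^*_{\kappa^{0}(\bar{\mu}_i)}$. Strong inaccessibility of $\kappa$ gives $2^{|V^*_{\kappa^{0}(\bar{\mu}_i)}|} < \kappa$, so there are fewer than $\kappa$ possibilities for each $p_i$, and likewise $\kappa$ possibilities for $p_0^{0} \in V^*_\kappa$. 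Summing over the finitely many possible lengths yields $\sum_{n<\omega} \kappa^{n}\cdot\kappa = \kappa$ stems in total.

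Putting the two points together, any family of $\kappa^+$ conditions must contain two sharing a stem, and these are compatible; hence every antichain has size at most $\kappa$. I expect the only genuine obstacle to be the cardinality bound in the third paragraph: one must verify carefully that the lower components, and in particular their measure-one sets $T^{p_i}$, really live strictly below $\kappa$, so that strong inaccessibility keeps their number below $\kappa$. Everything else is a direct unwinding of the definitions of $\mathbb{Q}_{\bar{E}}$ and $\leq^*$ together with Lemma 3.29, which is why the statement is an immediate corollary.
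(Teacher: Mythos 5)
Your argument is correct and is exactly the intended unwinding of the paper's one-line proof: the paper derives this as an ``immediate corollary'' of the preceding lemma that $(\mathbb{Q}^*_{\bar{E}},\leq^*)$ is $\kappa^+$-c.c.\ (conditions with the same $p^0$ are compatible, and there are only $\kappa$ many $p^0$'s), which is precisely your stem argument with the lower blocks counted by inaccessibility of $\kappa$ and the top block handled by that compatibility fact. The only cosmetic slip is the citation: the compatibility statement you invoke is Lemma 3.30 in the paper's numbering, not 3.29.
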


\begin{lemma}
$(\mathbb{Q}_{\bar{E}}, \leq, \leq^*)$ satisfies the Prikry property.
\end{lemma}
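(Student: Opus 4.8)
The plan is to obtain the Prikry property of $\mathbb{Q}_{\bar{E}}$ as a transfer, along the projection $\Phi$, of the corresponding property of $\mathbb{P}_{\bar{E}}$. This is exactly what the machinery of Section~2 was built for: the notion of a projection of Prikry type (Definition 2.5) was isolated precisely so that such a map carries the Prikry property from its domain to its range (Lemma 2.6). Consequently, once $\Phi$ has been recognised as a projection of Prikry type, the present statement should follow with essentially no further computation.

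Concretely, I would argue in three short steps. First, I note that $(\mathbb{P}_{\bar{E}}, \leq, \leq^*)$ is of Prikry type in the sense of Definition 2.3: the inclusion $\leq^* \subseteq \leq$ is built into the definition of the two orders (and is among the standing hypotheses already in place for Theorem 3.29), while the Prikry property of the extender-based forcing $\mathbb{P}_{\bar{E}}$ is one of its basic features, proved in \cite{friedman-golshani}. Second, by Theorem 3.29 the map $\Phi \colon \mathbb{P}_{\bar{E}} \to \mathbb{Q}_{\bar{E}}$ is a projection of Prikry type from $(\mathbb{P}_{\bar{E}}, \leq, \leq^*)$ into $(\mathbb{Q}_{\bar{E}}, \leq, \leq^*)$. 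Third, I apply Lemma 2.6: since $\Phi$ is a projection of Prikry type and its domain is of Prikry type, the target $(\mathbb{Q}_{\bar{E}}, \leq, \leq^*)$ is of Prikry type as well. The conclusion of Lemma 2.6 bundles $\leq^* \subseteq \leq$ for $\mathbb{Q}_{\bar{E}}$ (clear from the definition of its orders) together with the Prikry property, and it is the latter clause that is precisely the assertion to be proved.

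Because the proof is a direct invocation of Lemma 2.6, I do not anticipate a genuine obstacle here: the real content has already been absorbed into the two inputs, namely the verification that $\Phi$ is a Prikry type projection (Theorem 3.29) and the Prikry property of $\mathbb{P}_{\bar{E}}$ from \cite{friedman-golshani}. Should a self-contained argument be wanted, one would simply specialise the proof of Lemma 2.6 to this setting, working in the regular open algebra of $\Phi[\mathbb{P}_{\bar{E}}]$: given $p \in \mathbb{Q}_{\bar{E}}$ and a statement $\sigma$, lift $p$ to some $\tilde{p} \in \mathbb{P}_{\bar{E}}$ with $\Phi(\tilde{p}) \leq^* p$ using the $\leq^*$-density of $\Phi[\mathbb{P}_{\bar{E}}]$, use the Prikry property of $\mathbb{P}_{\bar{E}}$ to $\leq^*$-decide the Boolean value of $\sigma$ below $\tilde{p}$, and let $\Phi$ carry the resulting direct extension down to a $\leq^*$-extension of $p$ deciding $\sigma$; the $\leq^*$-density of $\Phi[\mathbb{P}_{\bar{E}}]$ then spreads this to all of $\mathbb{Q}_{\bar{E}}$. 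Quoting Lemma 2.6 directly is, however, the cleaner route.
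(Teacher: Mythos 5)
Your proof is correct and is exactly the paper's argument: the paper's proof of this lemma reads ``By Lemma 2.6 and Theorem 3.29,'' which is precisely your combination of the Prikry property of $\mathbb{P}_{\bar{E}}$, the fact that $\Phi$ is a projection of Prikry type, and the transfer lemma from Section 2.
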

\begin{proof}
By Lemma 2.6 and Theorem 3.29.
\end{proof}
We also have the following splitting lemma, whose proof is easy.
\begin{lemma} (the splitting lemma)
Assume
\begin{center}
$p=p_{n} ^{\frown} ...^{\frown} p_{0} \in \mathbb{Q}_{\bar{E}}$
\end{center}
where
\begin{itemize}
\item $p_{0} \in \mathbb{Q}_{\bar{E}}^{*}, \kappa^{0}(p_{0}^{0}) \geq \kappa^{0}(\bar{\mu}_{1}),$
\item $p_{1} \in \mathbb{Q}_{\bar{\mu}_{1}}^{*}, \kappa^{0}(p_{1}^{0}) \geq \kappa^{0}(\bar{\mu}_{2}),$

$\vdots$

    \item $p_{n} \in \mathbb{Q}_{\bar{\mu}_{n}}^{*},$
\end{itemize}
and let $0 < m < n.$ Let
\[
p^{\leq m}=p_{n} ^{\frown} ...^{\frown} p_{m}
\]
and
\[
p^{> m}=\langle \kappa^0(\bar{\mu}_{m}) \rangle^{\frown}  p_{m+1} ^{\frown} ...^{\frown} p_{0}.
\]
Then $p^{\leq m} \in \mathbb{Q}_{\bar{\mu}_{m}}$, $p^{> m} \in \mathbb{Q}_{\bar{E}}$, and there exists
\[
\mathbb{Q}_{\bar{E}}/ p \simeq \mathbb{Q}_{\bar{\mu}_{m}} / p^{\leq m} \times \mathbb{Q}_{\bar{E}}/ p^{> m}
\]
which is a forcing isomorphism with respect to both $\leq$ and $\leq^*$ relations. Further $(\mathbb{Q}_{\bar{E}}/ p^{> m}, \leq^*)$ is
$\kappa^0(\bar{\mu}_{m+1})$-closed.
\end{lemma}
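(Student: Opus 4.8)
The plan is to establish the three assertions of the splitting lemma in the order they are stated: first the membership facts $p^{\leq m}\in\mathbb{Q}_{\bar\mu_m}$ and $p^{>m}\in\mathbb{Q}_{\bar E}$, then the isomorphism, and finally the closure of the upper factor. For the membership claims I would simply unwind Definition 3.20 (the definition of a condition in $\mathbb{Q}_{\bar E}$). A condition is a $\append$-sequence whose blocks live in the starred posets $\mathbb{Q}^*_{\bar\mu_i}$ attached to a $^0$-increasing sequence of extender sequence systems, subject to the constraints $\kappa^0(p_i^0)\geq\kappa^0(\bar\mu_{i+1})$. Truncating at $m$ leaves a genuine such sequence with base system $\bar\mu_m$, so $p^{\leq m}=p_n\append\dots\append p_m\in\mathbb{Q}_{\bar\mu_m}$. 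For $p^{>m}$ the point of prepending the stem $\langle\kappa^0(\bar\mu_m)\rangle$ is precisely to supply a lower anchor so that the truncated initial segment $p_{m+1}\append\dots\append p_0$ together with the trivial block at $\kappa^0(\bar\mu_m)$ forms a legal condition in $\mathbb{Q}_{\bar E}$; here one uses the equivalence-class identification (the relation $\sim$ defined just after Definition 3.18) to treat $\langle\kappa^0(\bar\mu_m)\rangle$ as a trivial starred condition whose only relevant data is its $\mathrm{mc}$.

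Next I would construct the isomorphism $\mathbb{Q}_{\bar E}/p\simeq \mathbb{Q}_{\bar\mu_m}/p^{\leq m}\times\mathbb{Q}_{\bar E}/p^{>m}$ by the obvious splitting map: a condition $r\leq p$ in $\mathbb{Q}_{\bar E}$ decomposes uniquely, once we know at which coordinate its stem crosses $\kappa^0(\bar\mu_m)$, into an upper part lying below $p^{\leq m}$ and a lower part lying below $p^{>m}$, and conversely any compatible pair glues back to a single condition below $p$. The verification that this is order preserving and order reflecting for both $\leq$ and $\leq^*$ amounts to checking that the one-point-extension operation of Definition 3.22 and the Prikry extension relation of Definitions 3.21 and 3.23 act coordinatewise, and never mix data across the cut at $\kappa^0(\bar\mu_m)$. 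This is where the $^0$-increasing hypothesis is doing the real work: because $\kappa^0(\bar\mu_{m+1})<\kappa^0(\bar\mu_m)$, no measure-one set $T$ appearing below the cut can produce a coordinate $\bar\nu$ with $\kappa^0(\bar\nu)$ above the cut, so extensions of the lower and upper halves are genuinely independent.

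Finally, for the closure statement I would argue that $(\mathbb{Q}_{\bar E}/p^{>m},\leq^*)$ is $\kappa^0(\bar\mu_{m+1})$-closed. Since $p^{>m}$ has its base extender sequence system with $\kappa^0$-value equal to $\kappa^0(\bar\mu_{m+1})$ at the top active block, a $\leq^*$-decreasing chain of length $<\kappa^0(\bar\mu_{m+1})$ of conditions below $p^{>m}$ is fused in the standard way: the supports union up (each is of size $\leq\kappa<\kappa^0(\bar\mu_{m+1})$ by clause (1) of Definition 3.16, and there are fewer than $\kappa^0(\bar\mu_{m+1})$ of them, so the union is still small), the $\mathrm{mc}$'s form a $\leq_{\bar E}$-increasing chain with an upper bound, and the measure-one sets are intersected using the $\kappa^0(\bar\mu_{m+1})$-completeness of the relevant ultrafilters in $\bar E$. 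The resulting object is a lower $\leq^*$-bound, which is what closure requires.

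\textbf{The main obstacle} I expect is bookkeeping in the isomorphism step rather than any conceptual difficulty: one must be careful that the artificial stem entry $\langle\kappa^0(\bar\mu_m)\rangle$ in $p^{>m}$ is handled consistently on both sides of the isomorphism (which is exactly why the remark after Definition 3.18 flags that the $\sim$-identification is used here), and that the coordinatewise description of both $\leq$ and $\leq^*$ survives the one-point extensions that define $\leq$ via Definitions 3.24--3.26. Once the coordinatewise structure is pinned down, closure follows from the completeness of the extender ultrafilters in the now-routine fusion argument.
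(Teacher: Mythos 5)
The paper offers no proof of this lemma at all (it is introduced with the remark that its proof is easy), and your sketch supplies exactly the routine argument the author evidently intends: the membership claims by unwinding the definition of a condition, the isomorphism via the coordinatewise splitting map at the cut $\kappa^{0}(\bar{\mu}_{m})$ (with the key observation that one-point extensions never move data across the cut, since the measure-one trees of the lower blocks concentrate below it while the stems of the upper blocks sit above it), and the closure by the standard fusion using directedness of $\leq_{\bar{E}}$ and completeness of the ultrafilters. This is correct in substance and is what the author would have written. The one slip is in the closure step, where you assert that each support has size $\leq \kappa < \kappa^{0}(\bar{\mu}_{m+1})$: that inequality is reversed, since $\kappa^{0}(\bar{\mu}_{m+1}) < \kappa$; the argument survives because each block $\mathbb{Q}^{*}_{\bar{\mu}_{i}}$ occurring in $p^{>m}$ carries its own support bound $\kappa^{0}(\bar{\mu}_{i})$, which does exceed $\kappa^{0}(\bar{\mu}_{m+1})$, so a union of fewer than $\kappa^{0}(\bar{\mu}_{m+1})$ many supports still respects that block's bound.
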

So by standard arguments, we have the following result.
\begin{lemma}
With the same notation as above, any bounded subset of $\kappa^0(\bar{\mu}_{m+1})$ in $V^{\mathbb{Q}_{\bar{E}}/ p}$ is in fact in $V^{\mathbb{Q}_{\bar{\mu}_{m}} / p^{\leq m}}$.
\end{lemma}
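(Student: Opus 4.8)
The plan is to read the conclusion off the splitting lemma together with Lemma 2.4. Write $A=\mathbb{Q}_{\bar{\mu}_m}/p^{\leq m}$ and $B=\mathbb{Q}_{\bar{E}}/p^{>m}$, so that the splitting lemma gives a forcing isomorphism $\mathbb{Q}_{\bar{E}}/p\simeq A\times B$ respecting both $\leq$ and $\leq^*$, and tells us that $(B,\leq^*)$ is $\kappa^{0}(\bar{\mu}_{m+1})$-closed. Since both the forcing order and the Prikry order on $A\times B$ are the coordinatewise products of those on $A$ and $B$, it suffices to prove that the factor $B$ adds no new bounded subset of $\theta:=\kappa^{0}(\bar{\mu}_{m+1})$ over $V^{A}$: any bounded $x\subseteq\theta$ lying in $V^{\mathbb{Q}_{\bar{E}}/p}=V^{A}[G_B]$ would then already belong to $V^{A}=V^{\mathbb{Q}_{\bar{\mu}_m}/p^{\leq m}}$, which is the assertion. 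So I would force with $A$ first and then analyse $B$ over the intermediate model $V^{A}$, where I would apply Lemma 2.4 to $B$.

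For Lemma 2.4 to apply in $V^{A}$, I must check that $B$ is still of Prikry type there and that $(B,\leq^*)$ is still $\theta$-closed there. The operative fact behind both is the cardinal gap $\kappa^{0}(\bar{\mu}_{m+1})<\kappa^{0}(\bar{\mu}_m)<\kappa^{0}(\bar{\mu}_{m-1})$: the auxiliary forcing $A$ acts at and below $\kappa^{0}(\bar{\mu}_m)$ and is $\kappa^{0}(\bar{\mu}_m)^{+}$-c.c.\ (the analogue for $\mathbb{Q}_{\bar{\mu}_m}$ of the chain condition proved above for $\mathbb{Q}_{\bar{E}}$), whereas $B$ lives strictly above $\kappa^{0}(\bar{\mu}_m)$, its measure-one sets being drawn from the $\kappa^{0}(\bar{\mu}_m)^{+}$-complete measures of $\bar{E}$, so that $(B,\leq^*)$ is in fact $\kappa^{0}(\bar{\mu}_m)^{+}$-closed (more than the splitting lemma records). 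A $\leq^*$-descending sequence in $B$ of length $\leq\kappa^{0}(\bar{\mu}_m)$ appearing in $V^{A}$ has, once its $A$-name is unravelled against a maximal antichain, only $\kappa^{0}(\bar{\mu}_m)$-many ground-model conditions as possible entries; by the completeness of the extender measures in $V$ one finds a single ground-model measure-one set sitting below all of them, hence a $\leq^*$-lower bound lying in $V^{A}$. The Prikry property (established above for $\mathbb{Q}_{\bar{E}}$ through the Prikry type projection $\Phi$ and Lemma 2.6) relativises to the cone below $p^{>m}$ and survives the passage to $V^{A}$ for the same reason, $A$ being small relative to the closure of $B$. Granting these two points, Lemma 2.4 in $V^{A}$ gives that $B$ adds no new bounded subset of $\theta$, and the proof is complete.

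The only genuine work is the preservation step of the previous paragraph, namely that Prikry-type-plus-$\leq^*$-closure for $B$ is not destroyed by the auxiliary forcing $A$; everything else is bookkeeping with the splitting isomorphism. I expect this to be the main obstacle, and I would handle it by observing that the configuration is exactly the Easton one, a chain-condition factor $A$ against a highly closed factor $B$, so that short sequences of $B$-conditions in $V^{A}$ reflect through an antichain of $A$ into $V$, where the completeness of the $\bar{E}$-measures supplies the required lower bounds.
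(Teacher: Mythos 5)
Your argument is correct and is exactly the ``standard argument'' the paper invokes without further detail immediately after the splitting lemma: factor $\mathbb{Q}_{\bar{E}}/p$ as $A\times B$ via Lemma 3.33, observe that $A$ satisfies a small chain condition while $(B,\leq^*)$ is highly closed (the stems are frozen along $\leq^*$, so the possible values of an $A$-name for a $\leq^*$-descending sequence are pairwise $\leq^*$-compatible and the completeness of the measures yields a lower bound already in $V$), and then apply Lemma 2.4 to $B$ over $V^{A}$. This matches the paper's intended route, so nothing further is needed.
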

Suppose that $H^*$ is $\mathbb{Q}_{\bar{E}}-$generic over $V=V^*[G]$.  Let
\[
\langle  \bar{\mu}_\alpha: \alpha < \kappa       \rangle
\]
be an enumeration of $\bigcup\{ \max(p_0^0):   p \in H^*          \}$ so that
\[
\alpha < \beta < \kappa \Rightarrow \kappa^0(\bar{\mu}_\alpha) < \kappa^0(\bar{\mu}_\beta).
\]
\begin{lemma}
$V[H^*]=V[\langle  \bar{\mu}_\alpha: \alpha < \kappa       \rangle].$
\end{lemma}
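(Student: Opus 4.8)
The plan is to prove the two inclusions separately, the forward one being immediate and the reverse one being the whole content. Since $\langle \bar{\mu}_\alpha : \alpha < \kappa \rangle$ is read off from $H^*$ by a definition whose only nontrivial parameter is $H^*$ itself, we get $V[\langle \bar{\mu}_\alpha : \alpha < \kappa \rangle] \subseteq V[H^*]$ at once. Thus it suffices to reconstruct $H^*$ inside $V[\langle \bar{\mu}_\alpha : \alpha < \kappa \rangle]$, i.e.\ to exhibit $H^*$ as an object definable in $V$ from the parameter $\langle \bar{\mu}_\alpha : \alpha < \kappa \rangle$.

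First I would define, working in $V[\langle \bar{\mu}_\alpha : \alpha < \kappa \rangle]$, a candidate filter $G^*$ by the natural ``end-extension'' criterion: a condition $p = p_n ^\frown \cdots ^\frown p_0 \in \mathbb{Q}_{\bar{E}}$ is placed in $G^*$ exactly when the finitely many stems $p_i^0$ occur among the $\bar{\mu}_\alpha$ in the correct order, the main coordinates $\mc(p_i)$ lie above them, and each measure-one set $T^{p_i}$ contains every $\bar{\mu}_\alpha$ (suitably projected via $\pi_{\mc(p_i),0}$) that falls in the block of the sequence governed by $p_i$. This criterion refers only to ground-model objects ($\mathbb{Q}_{\bar{E}}$, the measures, the projections) together with the parameter sequence, so $G^* \in V[\langle \bar{\mu}_\alpha : \alpha < \kappa \rangle]$.

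Next I would verify that $H^* \subseteq G^*$ and that any two elements of $G^*$ are compatible. The inclusion $H^* \subseteq G^*$ is where genericity enters: the stems of any $p \in H^*$ are by construction among the added extender sequences, and for the measure-one sets one uses that every condition below $p$ in $H^*$ refines $T^{p_i}$ to (a preimage of) itself and that one-point extensions draw $\bar{\nu}$ only from the current $T$; hence all later points of the generic sequence lying in the range controlled by $p_i$ indeed belong to $T^{p_i}$. Pairwise compatibility of members of $G^*$ follows from the block/splitting structure (the splitting lemma) together with the basic fact used in the proof that $(\mathbb{Q}^*_{\bar{E}}, \leq^*)$ is $\kappa^+$-c.c., namely that two starred conditions with the same stem are compatible: two members of $G^*$ have matching stems on overlapping blocks, and their measure-one sets may be intersected.

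Finally I would conclude $H^* = G^*$ by a standard genericity argument: if some $p \in G^* \setminus H^*$, then since $H^*$ is generic and $p \notin H^*$ there is $q \in H^*$ incompatible with $p$; but $q \in H^* \subseteq G^*$ and $p \in G^*$ are compatible, a contradiction. Hence $G^* \subseteq H^*$, so $H^* = G^*$ is definable from the sequence and $V[H^*] \subseteq V[\langle \bar{\mu}_\alpha : \alpha < \kappa \rangle]$. I expect the main obstacle to be the bookkeeping inside the membership criterion: one must track the projections $\pi_{\mc(p_i),0}$ relating the main-coordinate points $\bar{\nu}$ to the $\bar{E}_\kappa$-coordinate stems recorded in the sequence, and confirm that ``$\bar{\mu}_\alpha \in T^{p_i}$'' is exactly the right, ground-model-checkable condition. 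This is routine given how much simpler $\mathbb{Q}_{\bar{E}}$ is than $\mathbb{P}_{\bar{E}}$ (the conditions carry no $f$, $F$ and only the $\bar{E}_\kappa$-coordinate is nontrivial), but it is where the care lies.
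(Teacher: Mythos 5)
Your proposal is correct and follows essentially the same route as the paper: the paper also reconstructs $H^*$ inside $V[\langle \bar{\mu}_\alpha : \alpha<\kappa\rangle]$ as the definable family $H_*$ of conditions compatible with the generic sequence, checks $H^*\subseteq H_*$, shows $H_*$ is a filter by extending any two members to conditions with a common stem (the same same-stem compatibility fact underlying the $\kappa^+$-c.c.\ of $\mathbb{Q}^*_{\bar{E}}$), and concludes $H_*=H^*$ by maximality of generic filters. Your formulation of the membership criterion via the trees $T^{p_i}$ is just a more explicit rendering of the paper's ``every $\bar{\nu}_\alpha$ appears in some extension of $p$'' clause.
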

\begin{proof}
Let $H_*$ consists of those conditions $p \in \mathbb{Q}_{\bar{E}}$
such that:
\begin{enumerate}
\item If some $\bar{\nu}$ appears in $p^0,$ then $\bar{\nu}=\bar{\nu}_\alpha,$ for some $\alpha<\kappa,$
\item For any $\alpha<\kappa,$ there exists an extension $q$ of $p$ such that $\bar{\nu}_\alpha$ appears in $q$.
\end{enumerate}
It is clear that $H^* \subseteq H_*,$ so by genericity of $H^*$, it suffices to show that $H_*$ is a filter. But this is clear as any two conditions
$p,q$  in
$H_*$ have extensions $p_*, q_*$ respectively with $p_*, q_* \in H_*$ and with the same $p_*^0=q_*^0$, and then $p_*, q_*$ are compatible, and their natural common extension is still in $H_*$.
\end{proof}

Also let
\begin{center}
$C_{H^*} = \{ \max\kappa(p_{0}^{0}): p \in H^* \}.$
\end{center}
It is clear that $C_{H^*}=\{ \kappa^0(\bar{\mu}_\alpha): \alpha < \kappa    \}$. Also we have the following
\begin{lemma}
$(1)$ $C_{H^*}$ is a club in $\kappa,$

$(2)$ If $\beta < \kappa$ and $A \in V[\langle  \bar{\mu}_\alpha: \alpha < \kappa       \rangle]$
is a bounded subset of $\kappa^0(\bar{\mu}_\beta),$ then $A \in V[\langle  \bar{\mu}_\alpha: \alpha  < \beta       \rangle].$

$(3)$ $V[H^*]$ is a cardinal preserving extension of $V$.

$(4)$ $V[H^*]$ satisfies the $GCH$.
\end{lemma}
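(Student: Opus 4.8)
The plan is to derive all four clauses from one picture. By the preceding lemma $V[H^*]=V[\langle\bar{\mu}_\alpha:\alpha<\kappa\rangle]$, so every assertion concerns the single $\kappa$-sequence of extender sequences read off from $C_{H^*}$. The standing tools will be: (i) that $(\mathbb{Q}_{\bar{E}},\leq^*)$ is $\kappa$-closed, since supports have size $\leq\kappa$ and the measure-one sets $T\in\bar{E}_{\mc(p)}$ are closed under intersections of length $<\kappa$ by $\kappa$-completeness — combined with the Prikry property (established above) and the no-new-bounded-subsets lemma of Section 2, this gives that $\mathbb{Q}_{\bar{E}}$ adds no new bounded subset of $\kappa$; (ii) the splitting lemma, factoring the forcing at any club point into a small lower part and a highly closed upper part; and (iii) the $\kappa^+$-c.c. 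I would prove clause (2) first, as the rest lean on it.

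For (2) I would read it off the preceding bounded-capture lemma: choosing $p\in H^*$ whose stem already contains $\bar{\mu}_\beta$ and splitting at that coordinate, the upper factor is $\kappa^{0}(\bar{\mu}_{\beta+1})$-closed, so $A$ lies in the extension by the lower factor $\mathbb{Q}_{\bar{\mu}_\beta}$; it then remains to identify that extension with $V[\langle\bar{\mu}_\alpha:\alpha<\beta\rangle]$, which is the analogue below $\bar{\mu}_\beta$ of the lemma presenting $V[H^*]$ as $V[\langle\bar{\mu}_\alpha:\alpha<\kappa\rangle]$. For (1), unboundedness of $C_{H^*}$ is a density argument: below any $p$ and any $\delta<\kappa$ one finds $\bar{\nu}\in T^p$ with $\kappa^{0}(\bar{\nu})>\delta$, and $p_{\langle\bar{\nu}\rangle}$ places a club point past $\delta$; closure is the usual Radin argument, that every limit point of $C_{H^*}$ equals $\kappa^{0}(\bar{\mu}_\alpha)$ for some $\alpha$ with $\len(\bar{\mu}_\alpha)>0$ and so already lies in $C_{H^*}$. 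One may instead transport (1) from the corresponding property of $\mathbb{P}_{\bar{E}}$ (that $C_{H}^{\kappa}$ is a club) through the projection $\Phi$.

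For (3): cardinals $\geq\kappa^+$ are preserved by the $\kappa^+$-c.c.; cardinals $<\kappa$ are preserved because, by (i), no new bounded subset of $\kappa$ — in particular no new collapse between ordinals $<\kappa$ — is added; and $\kappa$ remains inaccessible. For the last point the cleanest route is $\Phi$: a $\mathbb{P}_{\bar{E}}$-generic yields a $\mathbb{Q}_{\bar{E}}$-generic extension sitting inside it, in which $\kappa$ is inaccessible, and since regularity and the strong-limit property are downward absolute, $1_{\mathbb{Q}_{\bar{E}}}$ forces $\kappa$ inaccessible. Alternatively one reruns the Radin argument: the generic club has order type $\kappa$ with points of positive length cofinally, forcing $\kappa$ to stay regular, while (i) keeps it a strong limit.

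Clause (4) is the heart of the matter and the step I expect to be hardest. Below $\kappa$ there is nothing to do: by (i) the powerset of each $\delta<\kappa$ is computed in $V$, so $2^\delta=\delta^+$. The difficulty is $2^\lambda=\lambda^+$ for $\lambda\geq\kappa$, where crude nice-name counting is useless: since the master coordinate $\mc(p)$ ranges over $\dom(\bar{E})=[\kappa,\kappa^{+3})$ we have $|\mathbb{Q}_{\bar{E}}|=\kappa^{+3}$, so counting nice names only gives $2^\lambda\leq\kappa^{+3}$, worthless for $\lambda\in\{\kappa,\kappa^+,\kappa^{++}\}$. The right tool is the measure-one-decision refinement of the Prikry property: there are only $\kappa$ possible stems (finite ${}^{0}$-increasing sequences of committed extender sequences, all lying in $V_{\kappa^{0}(\bar{E})}^{*}$, a set of size $\kappa$), and for a name $\dot{B}$ for a subset of $\lambda$ I would show that, for each stem $s$ and each $\xi<\lambda$ separately, ``$\xi\in\dot{B}$'' is decided to a fixed value on a measure-one set of one-point extensions, yielding a single function $f_{\dot{B}}\colon(\text{stems})\times\lambda\to 2$ lying in $V$ from which $B$ is recovered via the generic. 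As there are only $2^{\kappa\cdot\lambda}=2^\lambda=\lambda^+$ such functions in $V$ by $GCH$, this gives $2^\lambda\leq\lambda^+$ uniformly for all $\lambda\geq\kappa$. The technical heart, and the main obstacle, is establishing the canonical measure-one decision per stem — a strengthened Prikry property drawing on the $\kappa$-completeness of the $\bar{E}_\alpha$ together with the $\kappa$-closure of $\leq^*$ and the $\kappa^+$-c.c. of the pure part — and checking that the generic follows these decisions, so that $B$ is genuinely coded by $f_{\dot{B}}$.
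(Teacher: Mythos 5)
There is a genuine gap, and it sits in your standing tool (i). The order $(\mathbb{Q}_{\bar{E}},\leq^*)$ is \emph{not} $\kappa$-closed: a condition $p=p_n{}^\frown\cdots{}^\frown p_0$ has lower blocks living on extender sequence systems $\bar{\mu}_n,\dots,\bar{\mu}_1$ with small $\kappa^0(\bar{\mu}_n)$, and $\leq^*$-descending sequences must shrink the trees of those blocks, whose measures are only $\kappa^0(\bar{\mu}_n)$-complete; the splitting lemma records exactly this, giving $\kappa^0(\bar{\mu}_{m+1})$-closure only for the \emph{upper} factor. Consequently Lemma 2.4 does not apply to the whole forcing, and indeed $\mathbb{Q}_{\bar{E}}$ does add new bounded subsets of $\kappa$: for any limit point $\delta$ of $C_{H^*}$, the set $C_{H^*}\cap\delta$ is a new unbounded subset of $\delta$, not in $V$. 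This false premise is what you use both for preserving cardinals below $\kappa$ in (3) and for ``below $\kappa$ there is nothing to do'' in (4), so both of those arguments collapse. The repair is the one the paper uses: by clause (2) (itself a consequence of the splitting lemma), a subset $A$ of $\theta<\kappa^0(\bar{\mu}_\beta)$ lies in $V[\langle\bar{\mu}_\alpha:\alpha<\bar{\beta}\rangle]$, which is a $(\kappa^0(\bar{\mu}_{\bar{\beta}}))^+$-c.c.\ extension of $V$ with $\theta^+\geq(\kappa^0(\bar{\mu}_{\bar{\beta}}))^+$, ruling out a collapse; and $GCH$ below $\kappa$ is obtained by rerunning the $2^\kappa=\kappa^+$ computation at each level $\kappa^0(\bar{\mu}_{\bar{\beta}})$ inside the lower factor, not by claiming $P(\delta)\subseteq V$.

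For $2^\lambda=\lambda^+$ with $\lambda\in\{\kappa,\kappa^+,\kappa^{++}\}$ you correctly diagnose that nice-name counting fails, but your replacement --- a canonical measure-one decision of ``$\xi\in\dot{B}$'' per stem, amalgamated into a single $f_{\dot{B}}\in V$ --- is exactly the step you flag as unestablished, and it is genuinely problematic: deciding different $\xi<\lambda$ may require passing to $\lambda$-many different coordinates $\mc(p_0)\in[\kappa,\kappa^{+3})$, and there is no $\leq^*$-lower bound for $\lambda$-many pure conditions with distinct $\mc$'s in general, so the ``single function in $V$'' does not come for free. The paper takes a different and concrete route: every $X\in P^{V[H^*]}(\kappa)$ already lies in $V[H^*\upharpoonright s]$ for some $s\in([\bar{E}]^{\kappa})_V$, and $\mathbb{Q}_{\bar{E}}\upharpoonright s$ is isomorphic (using the $\sim$-identification and the maximal coordinate $\bar{E}(s)$) to the single-coordinate forcing $\{p:\mc(p_0)=\bar{E}_\kappa\}$, whose power set computation is Merimovich's Claim 10.3. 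You would need either to carry out your strengthened Prikry property in detail or to switch to this restriction-and-isomorphism argument. Your treatments of (1), of (2), and of the $\kappa^+$-c.c.\ part of (3) are fine and match the paper.
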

\begin{proof}
$(1)$ is a standard fact, and $(2)$ follows from Lemma 3.34. Lets prove $(3)$ and $(4).$

By Lemma 3.31, forcing with $\mathbb{Q}_{\bar{E}}$ preserves all cardinals greater than $\kappa.$
Now suppose on the contrary that forcing with $\mathbb{Q}_{\bar{E}}$ collapses cardinals, and let $\lambda$
be the least cardinal which is collapsed in $V[H^*]$. Then $\lambda=\theta^+ < \kappa$ is a successor cardinal, and there exists a subset $A \subseteq \theta$
which codes a collapsing map from $\lambda$ to $\theta$. Let $\beta<\kappa$ be the least ordinal such that $\lambda < \kappa^0(\bar{\mu}_\beta)$. Then
$\beta$ is a successor ordinal, say $\beta=\bar{\beta}+1.$ By $(2)$,
\[
A \in V[\langle  \bar{\mu}_\alpha: \alpha  < \beta       \rangle]=V[\langle  \bar{\mu}_\alpha: \alpha  < \bar{\beta}       \rangle].
\]
But $V[\langle  \bar{\mu}_\alpha: \alpha  < \bar{\beta}       \rangle]$ is a generic extension of $V$ by a $(\kappa^0(\bar{\mu}_{\bar{\beta}}))^+$-c.c.
forcing notion (by Lemma 3.31) and $ \lambda \geq (\kappa^0(\bar{\mu}_{\bar{\beta}}))^+$. We get a contradiction and $(3)$ follows.

Now, we show that in the extension by $\mathbb{Q}_{\bar{E}}, 2^\kappa=\kappa^+.$ For each $s \subseteq \bar{E}$ set
\[
\mathbb{Q}_{\bar{E}} \upharpoonright s = \{ p \in \mathbb{Q}_{\bar{E}}: \supp(p_0) \cup \{\mc(p_0)   \} \subseteq s            \}.
\]

By the same arguments as in \cite{merimovich2}, claim 10.3,
\[
P^{V[H^*]}(\kappa) = \bigcup_{s \in ([\bar{E}]^{\kappa})_V} P^{V[H^* \upharpoonright s]}(\kappa),
\]
where $H^* \upharpoonright s = H^* \cap \mathbb{Q}_{\bar{E}} \upharpoonright s.$ For any such $s$, we can assume it has a $\leq_{\bar{E}}$-maximal element,
say $\bar{E}(s),$ and then using our identification of two elements (see remarks after Definition 3.19), we have in fact
\[
\mathbb{Q}_{\bar{E}} \upharpoonright s \simeq \{ p \in \mathbb{Q}_{\bar{E}}: \mc(p_0)=\bar{E}(s)    \}.
\]
\begin{claim}
$\{ p \in \mathbb{Q}_{\bar{E}}: \mc(p_0)=\bar{E}(s)    \}\simeq \{ p \in \mathbb{Q}_{\bar{E}}: \mc(p_0)=\bar{E}_\kappa    \}$
\end{claim}
\begin{proof}
For $p=p_{n} ^{\frown} ...^{\frown} p_{0} \in \mathbb{Q}_{\bar{E}}$ with $\mc(p_0)=\bar{E}(s)$ set
\[
\pi(p)=p_{n} ^{\frown} ...^{\frown} p_1 ^{\frown} p^*_{0},
\]
where $p^*_0 = \{ \langle \bar{E}_\kappa, p_0^0, \pi^{-1''}_{\bar{E}(s), \bar{E}_\kappa}(T^{p_0}) \rangle \}$.
$\pi$ defines a forcing isomorphism
from $\{ p \in \mathbb{Q}_{\bar{E}}: \mc(p_0)=\bar{E}(s)    \}$ into a dense subset of $ \{ p \in \mathbb{Q}_{\bar{E}}: \mc(p_0)=\bar{E}_\kappa    \}.$
\end{proof}
It follows that for any $s$ as above,
\[
P^{V[H^* \upharpoonright s]}(\kappa)=P^{V[H^* \upharpoonright \{\bar{E}_\kappa\}]}(\kappa).
\]
But we have $|P^{V[H^* \upharpoonright \{\bar{E}_\kappa\}]}(\kappa)|=\kappa^+$ (see  \cite{merimovich2}, claim 10.3 )
and so $|P^{V[H^*]}(\kappa)|=\kappa^+.$
Finally $(4)$ follows from the splitting Lemma 3.33, Lemma 3.34 and the above argument.
\end{proof}

\subsection{Homogeneity of the quotient forcing}
Consider the projection $\Phi: \mathbb{P}_{\bar{E}} \rightarrow \mathbb{Q}_{\bar{E}}$ given in the last subsection. So given any $H$ which is $ \mathbb{P}_{\bar{E}}$-generic
over $V$,  $H^*$, the filter generated by $\Phi[H]$, is $ \mathbb{Q}_{\bar{E}}$-generic over $V$, and we can form the quotient forcing
\[
 \mathbb{P}_{\bar{E}}/ H^* = \{ p \in  \mathbb{P}_{\bar{E}}: \Phi(p) \in H^*                 \}.
\]
We show that $ \mathbb{P}_{\bar{E}}/ H^*$ has enough homogeneity properties to guarantee that
\[
HOD^{V[H]} \subseteq V[H^*].
\]
First we prove this for $\mathbb{P}^*_{\bar{E}}$ and $\mathbb{Q}^*_{\bar{E}}$.
\begin{lemma}
Suppose $p, q \in \mathbb{P}_{\bar{E}}^*$ are such that $\Phi_{\bar{E}}^*(p)=\Phi_{\bar{E}}^*(q).$ Then there exists an isomorphism
\[
\chi_{\bar{E}}: \mathbb{P}^*_{\bar{E}}/ p \simeq \mathbb{P}^*_{\bar{E}}/ q.
\]
\end{lemma}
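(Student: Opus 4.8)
The plan is to produce $\chi_{\bar{E}}$ as an explicit relabelling that fixes the ``public'' data recorded by $\Phi_{\bar{E}}^{*}$ and transports only the ``private'' data. First I would unwind the hypothesis. Since $\Phi_{\bar{E}}^{*}$ discards the functions $f,F$, replaces every non‑minimal coordinate value by $\langle \rangle$, and retains the support, the top extender, the tree, and the value on $\bar{E}_\kappa$, the equality $\Phi_{\bar{E}}^{*}(p)=\Phi_{\bar{E}}^{*}(q)$ says precisely that $\supp(p)=\supp(q)=s$, $\mc(p)=\mc(q)=\bar{E}_\alpha$, $T^{p}=T^{q}=T$ and $p^{0}=q^{0}$. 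In particular $\kappa(p^{0})=\kappa(q^{0})$, so $f^{p},f^{q}$ take values in the same posets $R(\kappa(p^{0}),\nu^{0})$ and have the same domain $\{\bar{\nu}\in T:\len(\bar{\nu})=0\}$, and likewise $F^{p},F^{q}$ share domain and target posets. Thus $p$ and $q$ can differ only in the values $p^{\bar{\gamma}},q^{\bar{\gamma}}$ at the coordinates $\bar{\gamma}\in s\setminus\{\bar{E}_\kappa\}$ and in the guiding functions $f,F$; the case $p^{0}=\emptyset$ is transparent (then $f^{p}=f^{q}=\emptyset$), so the content lies in $p^{0}\neq\emptyset$.

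Next I would fix, once and for all, the data transporting the private parts. Because each $R(\cdot,\cdot)$ is a product of Cohen and L\'evy--collapse forcings it is homogeneous, so for every $\mu\in\dom(f^{p})=\dom(f^{q})$ the cones below $f^{p}(\mu)$ and $f^{q}(\mu)$ are isomorphic; choose an isomorphism $\psi_\mu\colon R(\kappa(p^{0}),\mu^{0})/f^{p}(\mu)\to R(\kappa(p^{0}),\mu^{0})/f^{q}(\mu)$, and for every pair $\langle \mu_1,\mu_2\rangle\in\dom(F^{p})$ an analogous $\Psi_{\mu_1,\mu_2}$. For $r\leq^{*}p$ I then set $\chi_{\bar{E}}(r)$ to keep $\mc(r)$, $\supp(r)$, $T^{r}$ and the values $r^{\bar{\gamma}}$ on the new coordinates $\bar{\gamma}\in\supp(r)\setminus s$ exactly as they are, to replace $r^{\bar{\gamma}}=p^{\bar{\gamma}}$ by $q^{\bar{\gamma}}$ on the old coordinates $\bar{\gamma}\in s$, and to put $f^{\chi_{\bar{E}}(r)}(\nu)=\psi_{\pi(\nu)}(f^{r}(\nu))$ and $F^{\chi_{\bar{E}}(r)}(\nu_1,\nu_2)=\Psi_{\pi(\nu_1),\pi(\nu_2)}(F^{r}(\nu_1,\nu_2))$, where $\pi=\pi_{\mc(r),\bar{E}_\alpha}$. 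The inverse is defined symmetrically, interchanging $p$ and $q$ and using $\psi_\mu^{-1},\Psi_{\mu_1,\mu_2}^{-1}$, so $\chi_{\bar{E}}$ is a bijection.

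It then remains to run a clause‑by‑clause check of Definition 3.10 to see that $\chi_{\bar{E}}(r)\leq^{*}q$ and that $\chi_{\bar{E}}$ is order preserving. Clauses $(1)$--$(4)$ and $(6)$ are immediate, since they mention only $\supp,\mc,T$, which are either untouched or governed by $\Phi_{\bar{E}}^{*}(p)=\Phi_{\bar{E}}^{*}(q)$. Clauses $(8)$ and $(9)$ hold because each $\psi_\mu,\Psi_{\mu_1,\mu_2}$ carries a cone below $f^{p}(\mu)$ (resp. $F^{p}(\mu_1,\mu_2)$) into the cone below $f^{q}(\mu)$ (resp. $F^{q}(\mu_1,\mu_2)$); monotonicity follows by applying the same order‑isomorphisms to $r'\leq^{*}r\leq^{*}p$.

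The genuinely delicate points are clauses $(5)$ and $(7)$ together with the guiding‑generic requirement $(10.3)$, and I expect $(10.3)$ to be the main obstacle. For $(7)$ one must check that ``$\bar{\nu}$ permitted to $q^{\bar{\gamma}}$'' is reconciled with ``$\bar{\nu}$ permitted to $p^{\bar{\gamma}}$'' and that the projection identities $\pi_{\mc(r),\bar{\gamma}}(\bar{\nu})=\pi_{\bar{E}_\alpha,\bar{\gamma}}(\pi(\bar{\nu}))$ persist; these identities are dictated by the extender projections rather than by the coordinate values, so the transport can be arranged to preserve them, and similarly one checks that the supremum $\bigcup\bigcup j_{\bar{E}}(f^{q})(\kappa(\mc(q)))$ controlling $(5)$ matches the one for $p$. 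The hard part is $(10.3)$: the transported $F^{\chi_{\bar{E}}(r)}$ must still satisfy $j_{\bar{E}}^{2}(F^{\chi_{\bar{E}}(r)})(\beta,j_{\bar{E}}(\beta))\in I(\bar{E})$ with $\beta=\kappa(\mc(r))$. Applying $j_{\bar{E}}^{2}$ converts this into the demand that the automorphism of $\mathbb{R}_{\bar{E}}=j_{\bar{E}}^{2}(R)(\kappa,j_{\bar{E}}(\kappa))$ (Lemma 3.9) induced by the family $\langle \Psi_{\mu_1,\mu_2}\rangle$ send $I(\bar{E})$ to itself. I would therefore choose the $\Psi_{\mu_1,\mu_2}$ and $\psi_\mu$ not as arbitrary cone‑isomorphisms but as restrictions of automorphisms of the homogeneous posets $R(\cdot,\cdot)$ whose $j_{\bar{E}}^{2}$‑image fixes $I(\bar{E})$ setwise; the existence of such automorphisms is exactly the homogeneity of the Cohen and collapse forcings combined with the genericity of $I(\bar{E})$. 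Once $(5)$, $(7)$ and $(10.3)$ are secured, symmetry shows $\chi_{\bar{E}}^{-1}$ is likewise order preserving, whence $\chi_{\bar{E}}$ is the required isomorphism.
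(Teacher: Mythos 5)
Your proposal is correct and follows essentially the same route as the paper: unwind $\Phi_{\bar{E}}^*(p)=\Phi_{\bar{E}}^*(q)$ into equality of $\supp$, $\mc$, $T$ and $p^0=q^0$, then transport the private data $(p^{\bar\gamma})_{\bar\gamma\in s}$, $f$, $F$ via cone-isomorphisms of the homogeneous guiding posets $R(\cdot,\cdot)$ while fixing everything $\Phi_{\bar{E}}^*$ records. Your extra attention to clause $(10.3)$ — insisting that the $j_{\bar{E}}^2$-images of the chosen isomorphisms respect $I(\bar{E})$ — is a genuine refinement of a point the paper's proof leaves implicit in its choice of the $\psi_\nu$, $\psi_{\nu_1,\nu_2}$.
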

\begin{proof}
As $\Phi_{\bar{E}}^*(p)=\Phi_{\bar{E}}^*(q),$ we have
\begin{enumerate}
\item $\supp(p)=\supp(q)$ and $\mc(p)=\mc(q)$,
\item $p^0=q^0$,
\item $T^p=T^q,$ in particular  $\dom(f^p)=\dom(f^q)$ and $\dom(F^p)=\dom(F^q).$
\end{enumerate}
Let  $T=T^p=T^q.$
The function $R(-, -)$ is homogeneous in the following sense:
\begin{enumerate}
\item [(4)] For any $\nu \in T$ with $\len(\nu)=0,$ there exists an isomorphism
\[
\psi_{\nu}: R(\kappa(p^0), \nu^0) / f^p(\nu^0) \simeq   R(\kappa(q^0), \nu^0) / f^q(\nu^0),
\]

\item [(5)] For any $\langle \nu_1, \nu_2 \rangle \in T^2$ with $\len(\nu_1)=\len(\nu_2)=0,$ there exists an isomorphism
\[
\psi_{\nu_1, \nu_2}: R(\nu_1^0, \nu_2^0) / F^p(\nu^0_1, \nu^0_2) \simeq   R(\nu_1^0, \nu_2^0) / F^q(\nu^0_1, \nu^0_2).
\]
\end{enumerate}
Now define $\chi_{\bar{E}}$ as follows. Assume $p^* \leq^* p.$ Then $q^*=\chi_{\bar{E}}(p^*)$ is defined to be a condition  in $\mathbb{P}^*_{\bar{E}}$ such that
\begin{enumerate}
\item [(6)] $\supp(q^*)=\supp(p^*)$ and $\mc(q^*)=\mc(p^*),$

\item [(7)] $\bar{\gamma} \in \supp(p) \Rightarrow (q^*)^{\bar{\gamma}} = q^{\bar{\gamma}},$

\item [(8)] $\bar{\gamma} \in \supp(p^*)\setminus \supp(p) \Rightarrow (q^*)^{\bar{\gamma}} = (p^*)^{\bar{\gamma}},$

\item [(9)] $T^{q^*} =T^{p^*},$

\item [(10)] $\dom(f^{q^*})=\dom(f^{p^*})$ and for $\nu \in \dom(f^{q^*}),$ $f^{q^*}(\nu)=\psi_{\nu}(f^{p^*})(\nu),$

\item [(11)] $\dom(F^{q^*})=\dom(F^{p^*})$ and for $\langle \nu_1, \nu_2 \rangle \in \dom(F^{q^*}),$ $F^{q^*}(\nu_1, \nu_2)=\psi_{\nu_1, \nu_2}(F^{p^*})(\nu_1, \nu_2).$
\end{enumerate}
It is evident that $q^* \leq^* q,$ so $\chi_{\bar{E}}: \mathbb{P}^*_{\bar{E}}/ p \rightarrow \mathbb{P}^*_{\bar{E}}/ q$ is well-defined.
It is not difficult to show that $\chi_{\bar{E}}$ is in fact an isomorphism.
\end{proof}
\begin{remark}
In fact it suffices to have $\Phi_{\bar{E}}^*(p)$ and $\Phi_{\bar{E}}^*(q)$ are compatible.
\end{remark}
We now prove the main result of this subsection
\begin{theorem}
Suppose $p, q \in \mathbb{P}_{\bar{E}}$ are such that $\Phi(p)=\Phi(q).$ Then there are $p' \leq^* p, q' \leq^* q$ and an isomorphism
\[
\chi: \mathbb{P}_{\bar{E}}/ p' \simeq \mathbb{P}_{\bar{E}}/ q'.
\]
\end{theorem}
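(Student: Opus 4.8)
The plan is to glue the block-level isomorphism of Lemma 3.37 along the canonical factorization of a condition below $p$ into an iterated one-point extension followed by a blockwise Prikry extension. First I would unwind the hypothesis. Writing $p=p_{n}{}^{\frown}\dots{}^{\frown}p_{0}$ and $q=q_{m}{}^{\frown}\dots{}^{\frown}q_{0}$, recall that $\Phi(p)=\Phi^*_{\bar{\mu}_n}(p_n){}^{\frown}\dots{}^{\frown}\Phi^*_{\bar{E}}(p_0)$ and that each $\Phi^*$ preserves $\mc$, $\supp$, $T$ and the coordinate $p^0$ while discarding $f$, $F$ and the coordinates $p^{\bar{\gamma}}$ with $\bar{\gamma}\neq\bar{E}_\kappa$. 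Since the blocks of a condition in $\mathbb{Q}_{\bar{E}}$ are recovered from their maximal coordinates $\mc$, the equality $\Phi(p)=\Phi(q)$ forces $n=m$, forces the two $^{0}$-increasing systems $\langle\bar{\mu}_n,\dots,\bar{\mu}_1,\bar{E}\rangle$ to agree, and forces $\Phi^*_{\bar{\mu}_i}(p_i)=\Phi^*_{\bar{\mu}_i}(q_i)$ for every $i\le n$. For each $i$, Lemma 3.37 then supplies an isomorphism $\chi_{\bar{\mu}_i}\colon\mathbb{P}^*_{\bar{\mu}_i}/p_i\simeq\mathbb{P}^*_{\bar{\mu}_i}/q_i$ that fixes $\mc$, $\supp$, $T$, $p^0$ and transports the $f,F$ coordinates through the $R$-homogeneity isomorphisms $\psi_\nu,\psi_{\nu_1,\nu_2}$.

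Next I would pass to auxiliary conditions $p'\le^* p$ and $q'\le^* q$. The reason $p,q$ cannot be used directly is that a one-point extension $p_{\langle\bar{\nu}\rangle}$ is well-defined only when $\kappa^0(\bar{\nu})>\bigcup\bigcup j_{\bar{E}}(f^{p,\Col})(\kappa(\mc(p)))$ (Definition 3.13), and the collapsing data $f^{p,\Col}$, $f^{q,\Col}$ — precisely what $\Phi^*$ forgets — need not coincide, so the sets of points addable to $p$ and to $q$ may differ. I would strengthen only the $f,F$ coordinates (which is a legal $\le^*$ step keeping $\mc,\supp,T,p^0$, and hence the full $\Phi^*$-image, unchanged), working inside the common guiding generic $I(\bar{E})$ that ties these data together, so that block by block the two thresholds agree. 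Then $\bar{\nu}$ is addable to $p'$ in block $i$ iff it is addable to $q'$, and $\Phi^*_{\bar{\mu}_i}(p'_i)=\Phi^*_{\bar{\mu}_i}(q'_i)$ still holds. Moreover, after adding a point $\bar{\nu}$ the derived blocks $(p'_i)_0,(p'_i)_1$ and $(q'_i)_0,(q'_i)_1$ retain equal $\Phi^*$-images, since their $\mc,\supp,T,p^0$ are computed from $\bar{\nu}$ and the shared data; thus Lemma 3.37 applies recursively to every block that can arise.

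Finally I would define $\chi$ on $\mathbb{P}_{\bar{E}}/p'$. Every $r\le p'$ factors as $r\le^*(p')_{\langle\bar{\nu}_1,\dots,\bar{\nu}_k\rangle}$ for a uniquely determined $^{0}$-increasing sequence of added points (the factorization built into the definitions of $\le^1$, $\le^n$ and $\le$); by the previous paragraph the same sequence is addable to $q'$, and the two conditions $(p')_{\langle\vec{\nu}\rangle}$, $(q')_{\langle\vec{\nu}\rangle}$ have blockwise equal $\Phi^*$-images. I set $\chi(r)$ to keep every added point $\bar{\nu}_j$ fixed and to apply the relevant block isomorphism $\chi_{\bar{\mu}}$ to each Prikry-extended block of $r$ over $(p')_{\langle\vec{\nu}\rangle}$. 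Preservation of $\le^*$ is inherited directly from the $\chi_{\bar{\mu}}$, the added-point part preserves the $\le^1$-structure because the points are left untouched, and the inverse is assembled identically from the $\chi_{\bar{\mu}}^{-1}$; so $\chi$ is a bijection respecting both $\le$ and $\le^*$.

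The hard part will be the coherence of this gluing: one must verify the full-forcing analogue of Lemma 3.28, namely that $\chi$ commutes with one-point extensions. This reduces to checking that the $R$-homogeneity isomorphisms $\psi_\nu,\psi_{\nu_1,\nu_2}$ chosen per block in Lemma 3.37 are compatible with the splitting of a block into $(p_i)_0$ and $(p_i)_1$, where $f,F$ are either restricted below $\bar{\nu}$ or read off as $F^{p_i}(\kappa(\bar{\nu}),-)$. Establishing this compatibility, together with the threshold alignment that makes the domains of the point-addition operation coincide on the two sides, is the crux; once it is in place, order-preservation and the fact that $\chi$ is a genuine isomorphism follow by the same block-by-block reasoning used for $\le^*$.
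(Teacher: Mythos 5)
Your overall architecture coincides with the paper's: reduce to the block level, invoke Lemma 3.37 on matching blocks, and glue the block isomorphisms along the factorization of an arbitrary extension into iterated one-point extensions followed by a $\leq^*$-extension. However, there is a genuine gap in the middle step: you have misidentified what the passage to $p'\leq^* p$, $q'\leq^* q$ must accomplish. The obstruction is not primarily the collapse threshold in Definition 3.13, but the \emph{permittedness predicate} on the coordinates that $\Phi^*$ forgets. For $\bar{\gamma}\in\supp(p)\setminus\{\bar{E}_\kappa\}$ the sequences $p^{\bar{\gamma}}$ and $q^{\bar{\gamma}}$ are unrelated, so a given $\bar{\nu}\in T$ may be permitted to $p^{\bar{\gamma}}$ but not to $q^{\bar{\gamma}}$; the one-point extension then appends $\pi_{\mc(p),\bar{\gamma}}(\bar{\nu})$ on one side and not on the other, the supports and coordinate values of the derived blocks diverge, and the divergence propagates (and worsens) through subsequent one-point extensions, so your claim that the derived blocks are ``computed from $\bar{\nu}$ and the shared data'' is false precisely where it matters. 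Since the definition of $\leq^*$ freezes $p^{\bar{\gamma}}$ for $\bar{\gamma}\in\supp(q)$, the only coordinate a $\leq^*$-step can use to control this is the measure-one tree. The paper's proof does exactly that: it builds $T_0\supseteq T_1\supseteq\cdots$ by diagonal intersections so that on $T_{n+1}$ permittedness to $(p^{\bar{\gamma}})_{\langle\bar{\nu}_0,\dots,\bar{\nu}_n\rangle}$ and to $(q^{\bar{\gamma}})_{\langle\bar{\nu}_0,\dots,\bar{\nu}_n\rangle}$ agree for every length-$(n{+}1)$ sequence of added points, and sets $T'=\bigcap_{n<\omega}T_n$; the conditions $p',q'$ are $p,q$ with $T$ replaced by $T'$. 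This recursive shrinking is the essential device your proposal is missing.

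Your substitute device --- strengthening $f,F$ inside the guiding generic ``so that the thresholds agree'' --- is both insufficient and not clearly achievable: shrinking $f^p$ and $f^q$ in the $R$-forcings does not obviously equalize $\bigcup\bigcup j_{\bar{E}}(f^{p,\Col})(\kappa(\mc(p)))$ with the corresponding quantity for $q$, and even if it did, it would leave the permittedness problem untouched. To the extent the threshold mismatch is a real concern for the well-definedness of $(q')_{\langle\bar{\nu}\rangle}$, it too is repaired by a further (single) shrinking of $T'$ to those $\bar{\nu}$ with $\kappa^0(\bar{\nu})$ above both thresholds, which remains measure one --- again a tree operation, not an $f,F$ operation. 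The final portion of your argument (the coherence of the gluing, the compatibility of $\psi_\nu,\psi_{\nu_1,\nu_2}$ with the splitting into $(p_i)_0,(p_i)_1$) is the right thing to check and matches the paper's inductive definition of $\chi$ on $\leq^n$-extensions, but it can only be carried out once permittedness has been synchronized on a measure-one tree as above.
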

\begin{proof}
Using the factorization properties of $\mathbb{P}_{\bar{E}},$ it suffices to prove the lemma for $p, q \in \mathbb{P}^*_{\bar{E}}.$
Then $\Phi(p)=\Phi(q)$ just means $\Phi_{\bar{E}}^*(p)=\Phi_{\bar{E}}^*(q),$
and so as above
\begin{enumerate}
\item $\supp(p)=\supp(q)$ and $\mc(p)=\mc(q)$,
\item $p^0=q^0$,
\item $T^p=T^q,$ in particular  $\dom(f^p)=\dom(f^q)$ and $\dom(F^p)=\dom(F^q).$
\end{enumerate}
Let  $s=\supp(p)=\supp(q)$, $\bar{E}_\alpha=\mc(p)=\mc(q)$ and $T=T^p=T^q.$
For any $\bar{\gamma} \in s,$ let $T(\bar{\gamma}) \in \bar{E}_\alpha$ be such that for all $\bar{\nu} \in T(\bar{\gamma}),$
\begin{center}
$\bar{\nu}$ is permitted for $p^{\bar{\gamma}}$$~\Leftrightarrow~$$\bar{\nu}$ is permitted for $q^{\bar{\gamma}}$,
\end{center}
and let $T_0=\Delta^0_{\bar{\gamma} \in s}T(\bar{\gamma}) \in \bar{E}_\alpha.$
Now suppose that $T_0 \supseteq T_{1} \supseteq  \dots \supseteq T_n$ are defined such that each $T_i \in \bar{E}_\alpha,$ we define $T_{n+1}$ as follows.
Assume $\langle \bar{\nu}_0, \dots, \bar{\nu}_n \rangle \in [T_n]^n$  is $^0$-increasing. For any $\bar{\gamma} \in s$ let $T_{n, \langle \bar{\nu}_0, \dots, \bar{\nu}_n \rangle}(\bar{\gamma}) \in \bar{E}_\alpha$ be such that for any $\bar{\nu} \in T_{n, \langle \bar{\nu}_0, \dots, \bar{\nu}_n \rangle}(\bar{\gamma})$
\begin{center}
$\bar{\nu}$ is permitted for $(p^{\bar{\gamma}})_{\langle \bar{\nu}_0, \dots, \bar{\nu}_n \rangle}$$~\Leftrightarrow~$$\bar{\nu}$ is permitted for $(q^{\bar{\gamma}})_{\langle \bar{\nu}_0, \dots, \bar{\nu}_n \rangle}$,
\end{center}
and let $T_{n+1}=T_n \cap \Delta^0_{\bar{\gamma}\in s} \Delta^0_{\langle \bar{\nu}_0, \dots, \bar{\nu}_n \rangle \in [T_n]^n} T_{n, \langle \bar{\nu}_0, \dots, \bar{\nu}_n \rangle}(\bar{\gamma})$. Then $T_{n+1}\in \bar{E}_\alpha.$ Finally set $T'=\bigcap_{n<\omega}T_n.$

 Let $p', q'$ be obtained from $p,q$ by replacing $T$ by $T'$ respectively.
We define an isomorphism
\[
\chi: \mathbb{P}_{\bar{E}}/ p' \simeq \mathbb{P}_{\bar{E}}/ q'.
\]
as follows. Let $p^* \leq p.$ So we can find $n < \omega$ such that $p^* \leq^n p.$ We define $\chi(p^*)$ by induction on $n$.
\\
{\bf Case $n=0$:} Then $p^* \leq^* p,$ and set $\chi(p^*)=\chi_{\bar{E}}(p^*).$
\\
{\bf Case $n=1$:} Then $p^* \leq^1 p,$ which  means $p^* = (p^*)^1$$^{\frown} (p^*)^0,$ and there exists some $\bar{\nu} \in T^{p}$ such that
$p^* = (p^*)^1$$^{\frown} (p^*)^0 \leq^* p_{\langle \bar{\nu}  \rangle}$. Let $p_{\langle \bar{\nu}  \rangle}=p^1$$^{\frown} p^0.$ Also
let $\bar{\mu}$ be such that $p^1, (p^*)^1 \in \mathbb{P}_{\bar{\mu}}.$
Set
\[
\chi(p^*)=\chi_{\bar{\mu}}((p^*)^1)^{\frown} \chi_{\bar{E}}((p^*)^0),
\]
and note that $\chi(p^*) \leq^* \chi(p)=\chi_{\bar{\mu}}(p^1)^{\frown} \chi_{\bar{E}}(p^0).$
\\
{\bf Case $n>1$:} This case can be defined as in case $n=1.$

It is clear that $\chi: \mathbb{P}_{\bar{E}}/ p \simeq \mathbb{P}_{\bar{E}}/ q$ is well-defined. Using Lemma 3.26, $\chi$ is easily seen to be an isomorphism.
\end{proof}
The following is a consequence of above theorem, and its proof is essentially the same as in \cite{c-f-g}.
\begin{corollary}
Let $H$ be $\mathbb{P}_{\bar{E}}$-generic over $V$, and let $H^*$ be the filter generated by $\Phi[H].$ Then

$(1)$ $H^*$ is $\mathbb{Q}_{\bar{E}}$-generic over $V$,

$(2)$ $HOD^{V[H]} \subseteq V[H^*].$
\end{corollary}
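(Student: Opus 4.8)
My plan is to treat the two parts separately, with essentially all of the work in $(2)$. For $(1)$, I would simply invoke Theorem 3.29, by which $\Phi$ is a projection of Prikry type and in particular a projection from $(\mathbb{P}_{\bar{E}}, \leq)$ into $(\mathbb{Q}_{\bar{E}}, \leq)$; Lemma 2.3 then gives at once that $H^*$ is $\mathbb{Q}_{\bar{E}}$-generic over $V$ and that $V[H^*] \subseteq V[H]$.

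For $(2)$ I would pass to the quotient $\mathbb{R}=\mathbb{P}_{\bar{E}}/H^*=\{p\in\mathbb{P}_{\bar{E}}:\Phi(p)\in H^*\}$, a poset lying in $V[H^*]$ for which $H$ is generic over $V[H^*]$ with $V[H]=V[H^*][H]$. The strategy is the usual homogeneity argument: given a set of ordinals $A=\{\xi:V[H]\models\varphi(\xi,\vec{\alpha})\}$ in $HOD^{V[H]}$ (with $\vec{\alpha}\in V[H^*]$, since forcing adds no ordinals), it suffices to show that each statement $\varphi(\check{\xi},\check{\vec{\alpha}})$ is decided by $1_{\mathbb{R}}$ over $V[H^*]$; for then
\[
A=\{\xi: 1_{\mathbb{R}}\Vdash^{V[H^*]}_{\mathbb{R}}\varphi(\check{\xi},\check{\vec{\alpha}})\}
\]
is definable in $V[H^*]$ from $\vec{\alpha}$ and $\mathbb{R}$, whence $A\in V[H^*]$.

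To obtain this decision I would establish that $\mathbb{R}$ is cone homogeneous over $V[H^*]$: for any $p,q\in\mathbb{R}$ there are $p'\leq p$, $q'\leq q$ in $\mathbb{R}$ and an isomorphism $\chi:\mathbb{R}/p'\simeq\mathbb{R}/q'$ in $V[H^*]$ fixing the canonical names of ordinals. The key input is Theorem 3.37, which produces such a $\chi$ at the level of $\mathbb{P}_{\bar{E}}$ whenever $\Phi(p)=\Phi(q)$. Two observations make it applicable to the quotient. First, inspecting the construction of $\chi$ (via Lemma 3.36), it alters only the data that $\Phi$ discards — the functions $f,F$, adjusted by the isomorphisms $\psi_{\nu},\psi_{\nu_1,\nu_2}$, and the coordinates away from $\bar{E}_\kappa$ — while fixing the support, the maximal coordinate $\mc$, the measure-one set, and the $\bar{E}_\kappa$-coordinate; hence $\Phi\circ\chi=\Phi$, so $\chi$ carries $\mathbb{R}$ to $\mathbb{R}$ and restricts to an isomorphism of quotient cones. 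Second, a general pair $p,q\in\mathbb{R}$ has $\Phi(p),\Phi(q)\in H^*$, hence compatible; fixing $s\in H^*$ below both and using the explicit shape of $\Phi$ (which records only support, $\mc$, $T$, and $p^0$) together with the fact that $\leq^*$ permits enlarging the support, raising $\mc$, and shrinking $T$ at will, I would build direct extensions $p'\leq^* p$, $q'\leq^* q$ with $\Phi(p')=\Phi(q')=s$, reducing to the hypothesis of Theorem 3.37.

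Granting cone homogeneity, the decision follows by the standard contradiction: if some $p$ forces $\varphi(\check{\xi},\check{\vec{\alpha}})$ and some $q$ forces its negation, choose $p'\leq p$, $q'\leq q$ and $\chi:\mathbb{R}/p'\simeq\mathbb{R}/q'$ with $\chi(p')=q'$; then $p'\Vdash\varphi$ forces $q'=\chi(p')\Vdash\varphi$, contradicting $q'\leq q\Vdash\neg\varphi$. The step I expect to be the real obstacle is the reduction to $\Phi(p)=\Phi(q)$: the projection property alone yields only $\Phi(p')\leq s$, so hitting the value $s$ exactly requires the concrete definitions of $\Phi$ and of the direct-extension relation, and verifying the attendant side conditions (the admissibility of the newly added coordinates and the clauses defining $\leq^*$). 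This is precisely the point to be carried out as in \cite{c-f-g}.
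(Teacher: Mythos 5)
Your proposal is correct and follows essentially the same route as the paper, which simply invokes the preceding cone-isomorphism theorem (Theorem 3.40, built on Lemma 3.38) and defers the standard homogeneity argument you spell out to \cite{c-f-g}. The one simplification available at the step you flag as the obstacle is that you need not arrange $\Phi(p')=\Phi(q')$ exactly: Remark 3.39 already records that compatibility of the images suffices for the isomorphism of cones, and compatibility is exactly what membership of $\Phi(p),\Phi(q)$ in the filter $H^*$ gives you.
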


\subsection{Completing the proof of Theorem 1.1.}
Finally in this subsection we complete the proof of Theorem 1.1. Let $V^*$ be the canonical core model for a $(\kappa+4)$-strong cardinal, and produce the model $V=V^*[G]$
as in subsection 3.2. Now define the corresponding forcing notions $\mathbb{P}_{\bar{E}}$ and $\mathbb{Q}_{\bar{E}}$ and let $H$ be $\mathbb{P}_{\bar{E}}$-generic over $V$.
Let $\kappa_0=\min(C^\kappa_H)$ and let $K$ be $\Col(\omega, \kappa_0^+)$-generic over $V[H].$

Also let $H^*$ be the filter generated by $\Phi[H],$ where $\Phi$ is the projection from
$\mathbb{P}_{\bar{E}}$ to $\mathbb{Q}_{\bar{E}}$ from subsection 3.6. By Corollary 3.41, $H^*$ is $\mathbb{Q}_{\bar{E}}$-generic over $V$, and we have
\[
HOD^{V[H]} \subseteq V[H^*].
\]
As $\Col(\omega, \kappa_0^+)$ is homogeneous, we have
\[
HOD^{V[H][K]}  \subseteq HOD^{V[H]} \subseteq V[H^*].
\]
But as $V^*$ is the canonical core model, we have $HOD^{V[H][K]} \supseteq V^*,$ and as
 $V[H^*]$ is a cardinal preserving extension of $V^*,$ we can conclude that $V^* \subseteq HOD^{V[H][K]} \subseteq V[H^*]$ have the same cardinals.
 But $V[H^*]$ satisfies the $GCH$ below $\kappa,$ so $HOD^{V[H][K]} \models$``$\forall \lambda< \kappa, ~ 2^\lambda=\lambda^+$''.

Now take $W=V[H][K]_\kappa.$ By Theorem 3.17, $W$ is a model of $ZFC$.
It is evident that
\[
V^*_\kappa \subseteq HOD^{W_\kappa} \subseteq (HOD^W)_\kappa,
\]
and hence as $V^*_\kappa  \subseteq (HOD^W)_\kappa$ both satisfy the $GCH$ and
have the same cardinals, we can conclude that
$HOD^{W_\kappa} \models$`` $GCH$''.
The theorem follows.

\section{$GCH$ can fail everywhere in $HOD$}
In this section we give a proof of Theorem 1.3. As we stated in the introduction, by a result of Roguski \cite{roguski}, every model $V$ of $ZFC$ has a class
generic extension $V[G]$ such that $V$ is equal to $HOD^{V[G]}$ (see also \cite{f-h-r} where some generalizations of this result are proved). The model $V[G]$
constructed in both  \cite{roguski} and \cite{f-h-r} fails to satisfy the $GCH$. We modify the above constructions, and prove the following theorem, from which
theorem 1.3 will follow easily.
\begin{theorem}
Assume $V$ is a model of $ZFC$. Then $V$ has a class generic extension $V[G]$ such that

$(1)$ $V[G] \models$``$ZFC+GCH$'',

$(2)$ $HOD$ of $V[G]$ equals $V$.
\end{theorem}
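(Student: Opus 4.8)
The plan is to realize $V[G]$ through a single $V$-definable, weakly homogeneous class forcing $\mathbb{P}$ that codes $V$ into the \emph{ordinal-definable cardinal pattern} of $V[G]$ while simultaneously forcing the $GCH$. The essential observation is that, once $GCH$ is to hold, the continuum function becomes trivial and useless for coding, which is exactly why the constructions of Roguski and Fuchs--Hamkins--Reitz destroy $GCH$; but the pattern of \emph{which ordinals are cardinals} is a second, completely free, ordinal-definable coding channel that $GCH$ does not constrain. One codes into it by collapsing cardinals. Concretely, the parameter-free definable von Neumann hierarchy $\langle V_\alpha : \alpha \in \On \rangle$ determines, by a fixed definable pairing, a sequence $\langle a_\alpha : \alpha \in \On \rangle$ of subsets of ordinals from which $V$ is recoverable; I would then take $\mathbb{P}$ to be a single reverse-Easton iteration of Levy collapses, definable in $V$ from this hierarchy, designed so that its $\xi$-th designated ``coding cardinal'' $\delta_\xi$ (chosen from a definable, widely spaced sequence) is collapsed precisely when the $\xi$-th bit of $\langle a_\alpha\rangle$ is $1$, while the remaining collapse steps are arranged to bring $2^\kappa$ down to $\kappa^+$ at every $\kappa$. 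Thus one reverse-Easton iteration both establishes the $GCH$ and records $V$ in the surviving-cardinal pattern.

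Next I would verify $V[G]\models ZFC+GCH$. Since $\mathbb{P}$ is a reverse-Easton iteration of Levy collapses, the standard factoring $\mathbb{P}\simeq \mathbb{P}_{\le\delta}*\dot{\mathbb{P}}_{>\delta}$, with a small head and a highly closed tail, together with a nice-name count, preserves the $GCH$ at every cardinal, and pretameness of the iteration yields preservation of $ZFC$. Cardinals below coding points are moved by design, but $GCH$ and $ZFC$ hold throughout, giving conclusion $(1)$.

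For conclusion $(2)$ I would prove the two inclusions separately. For $V\subseteq HOD^{V[G]}$: for each $\xi$ the statement ``$\delta_\xi$ is a cardinal'' is ordinal-definable in $V[G]$ and, by construction, is equivalent to the $\xi$-th bit of $\langle a_\alpha\rangle$; hence each $a_\alpha$, and so each $V_\alpha$, is ordinal-definable in $V[G]$, and since $HOD^{V[G]}$ is transitive and contains all ordinals we get $V=\bigcup_\alpha V_\alpha\subseteq HOD^{V[G]}$. For $HOD^{V[G]}\subseteq V$: the forcing $\mathbb{P}$ is definable in $V$ without parameters and weakly homogeneous, so for every formula $\varphi$ and ordinals $\bar\gamma$ the condition $1_{\mathbb{P}}$ already decides $\varphi(\check{\bar\gamma})$; consequently any set of ordinals that is ordinal-definable in $V[G]$ is defined in $V$ by the ($V$-definable) forcing relation applied to $1_{\mathbb{P}}$, hence lies in $V$. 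As every element of $HOD^{V[G]}$ is coded by such a set of ordinals, $HOD^{V[G]}\subseteq V$, and together with the previous inclusion $HOD^{V[G]}=V$.

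The main obstacle is the bookkeeping that reconciles three demands at once: keeping the collapses non-interfering so that each bit is independently settable and recoverable from the cardinal pattern; preserving $GCH$ across the entire class iteration; and maintaining the weak homogeneity and parameter-free $V$-definability of $\mathbb{P}$ required for $HOD^{V[G]}\subseteq V$. The conceptual tension—weak homogeneity tends to erase precisely the information that coding must preserve—dissolves here because the coded information lives entirely in the rigid, automorphism-invariant cardinal pattern, while the internal structure of each Levy collapse is homogeneous ``noise''; so coding and homogeneity operate on disjoint features. What remains genuinely delicate, and what I expect to be the technical heart of the argument, is the standard but nontrivial class-forcing analysis (pretameness, and preservation of $ZFC$ and $GCH$ for the reverse-Easton iteration) together with the verification that no unintended collapse disturbs the cardinals used to read off the bits, so that the decoding map from the cardinal pattern back to $\langle V_\alpha\rangle$ is correct.
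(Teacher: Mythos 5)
Your overall strategy---code $V$ into a $GCH$-invariant, ordinal-definable pattern using a weakly homogeneous, $V$-definable class forcing, and then run the two inclusions $V\subseteq HOD^{V[G]}$ (decoding) and $HOD^{V[G]}\subseteq V$ (homogeneity) separately---is the right one, and your choice of coding channel (the pattern of surviving cardinals, written by Levy collapses) is genuinely different from the paper's, which codes into the pattern of ordinals $\alpha$ at which $\diamondsuit^*_{\aleph_{\alpha+1}}$ holds, using $\Add(\diamondsuit^*_{\aleph_{\alpha+1}})$ at coded positions and $\Add(\aleph_{\alpha+1},\aleph_{\alpha+2})$ at the others; both channels are $GCH$-compatible and automorphism-invariant, though the paper's has the incidental advantage of not disturbing cardinals, and the paper keeps the canonical $GCH$ iteration as a separate middle step rather than folding it into the coding.

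There is, however, a genuine gap at the very first move. You assert that the von Neumann hierarchy ``determines, by a fixed definable pairing, a sequence $\langle a_\alpha:\alpha\in \On\rangle$ of subsets of ordinals from which $V$ is recoverable.'' This is false for an arbitrary model of $ZFC$: to code $V_\alpha$ by a set of ordinals you must choose a bijection of $V_\alpha$ with its cardinality, and without a definable global well-ordering there is no definable---indeed no class---function $\alpha\mapsto a_\alpha$ making these choices uniformly; in general $V\neq L[K]$ for any amenable class of ordinals $K$, and indeed $V\neq HOD^V$ is consistent, so no coding-into-ordinal-patterns scheme can even get started from $V$ itself. This is precisely what the paper's Step~1 is for: it first forces a global well-ordering $G^1$ with a set-closed (hence set-preserving), weakly homogeneous class forcing, passes to the expanded structure $\langle V,\in,G^1\rangle\models ZFC(U)$ with $V=HOD(U)$, and only then (after the canonical $GCH$ iteration) extracts a definable class $K$ of ordinals with $V=L[K]$ to feed into the coding step. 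Relatedly, your homogeneity argument for $HOD^{V[G]}\subseteq V$ leans on $\mathbb{P}$ being parameter-free definable in $V$, which fails for the same reason until global choice has been forced; and the correct bookkeeping requires the expanded-language versions of the $HOD$ lemmas (the paper's Lemmas~4.2 and~4.3) so that replacement holds for formulas mentioning the forcing relation and the added predicates. Once you prepend the global well-ordering step and carry the predicates along, your cardinal-pattern coding should go through, modulo the decoding and non-interference checks you already flag.
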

\begin{proof} We follow the proof of \cite{roguski}, and modify it using some ideas from \cite{brooke}, to make sure that
our final extension satisfies the $GCH$.
We usually work in an expanded language of $ZFC$, where some unary predicates $U_1, \dots, U_n$ are added. Then by $ZFC(U_1, \dots, U_n)$
we mean $ZFC+$all instances of the replacement for formulas of the language $\mathfrak{L}_{\in}(U_1, \dots, U_n)$. Then by
$HOD(U_1, \dots U_n)$ we denote the class of all sets that are hereditarily
$\mathfrak{L}_{\in}(U_1, \dots, U_n)$- definable with only ordinal parameters. The following is proved in \cite{roguski}
\begin{lemma}
$(a)$ $HOD(U_1, \dots U_n)$ is an inner model of $ZFC(U_1, \dots, U_n),$

$(b)$ There exists an $\mathfrak{L}_{\in}(U_1, \dots, U_n)$- definable well-ordering of the class $HOD(U_1, \dots U_n)$.
\end{lemma}
We also need the following known result (see \cite{roguski}, Lemma $C$).
\begin{lemma}
Assume $\langle V, \in, X_1, \dots, X_n \rangle$ is a model of $ZFC(U_1, \dots, U_n)$, and suppose $\PP$ is a weakly homogeneous tame
forcing notion which is definable with ordinal parameters. Then for any $\PP$-generic filter $G$ over  $\langle V, \in, X_1, \dots, X_n \rangle$,
\[
HOD(V, \in, X_1, \dots, X_n)=HOD(V[G], \in, X_1, \dots, X_n, V).
\]
\end{lemma}
We now define the combinatorial principle $\diamondsuit^*_\lambda$, which will be used as our coding oracle, that we replace it by the continuum coding function used in the proofs of \cite{roguski} and
\cite{f-h-r}.
\begin{definition}
Let $\lambda$ be a regular cardinal. A $\diamondsuit^*_\lambda$-sequence is a sequence $\bar{D}=\langle D_\alpha: \alpha < \lambda  \rangle$ such that
\begin{enumerate}
\item $\forall \alpha < \lambda, ~ D_\alpha \subseteq P(\alpha)$,
\item $\forall \alpha < \lambda, ~ |D_\alpha| \leq \aleph_0+|\alpha|,$
\item  For every $X \subseteq \lambda, \{\alpha < \lambda: X \cap \alpha \in D_\alpha  \}$ contains a closed unbounded (club) subset of $\lambda.$
\end{enumerate}
$\diamondsuit^*_\lambda$ holds if a $\diamondsuit^*_\lambda$-sequence exists.
\end{definition}
 A proof of the following lemma can be found in \cite{brooke}.
\begin{lemma}
Assume $GCH$ holds and $\lambda$ is a successor cardinal.

$(a)$ There exists a weakly homogeneous $\lambda$-closed $\lambda^+$-c.c. forcing notion $Add(\diamondsuit^*_\lambda)$ of size $\lambda^+$ which  forces ``$\diamondsuit^*_\lambda$''.

$(b)$ Forcing with $Add(\lambda, \lambda^+)$ forces $\neg\diamondsuit^*_\lambda.$
\end{lemma}
Now let $V$ be a model of $ZFC$. We define the required generic extension $V[G]$ in three steps:

{\bf Step 1.} Let $\PP^1$ be the forcing for adding a global well-ordering of the universe. A condition in $\PP^1$
is of the form $p=\langle  \alpha_p, <_{p} \rangle,$ where $\alpha_p$ is an ordinal and $<_p$ is a well-ordering of $V_{\alpha_p}.$
For $p, q\in \PP^1,$ we say $p \leq q$ iff $\alpha_p \geq \alpha_q$ and $<_q = <_p \cap (V_{\alpha_q}\times V_{\alpha_q})$.
Clearly $\PP^1$ is set-closed, so forcing with it does not add any new sets. Let $G^1$ be $\PP^1$-generic over $V$. Then using Lemma 4.3,
\[
\langle  V, \in, G^1       \rangle\models \text{``~}ZFC(U)+V=HOD(U)\text{''}.
\]

{\bf Step 2.}
Force  by $\PP^2,$ the canonical forcing  for $GCH$. It
is defined as the reverse Easton iteration of forcings
\begin{center}
$\PP^2=\langle \langle \PP^2_\gamma: \gamma\in On   \rangle, \langle \lusim{\mathbb{Q}}^2_\gamma: \gamma\in On   \rangle \rangle$
\end{center}
where at each step $\gamma,$ if $\gamma$ is a cardinal in $V^{\PP^2_{\gamma}},$ then $V^{\PP^2_{\gamma}}\models$``$\mathbb{Q}^2_\gamma=\Add(\gamma^+, 1)$'', and $V^{\PP^2_{\gamma}}\models$``$\mathbb{Q}^2_\gamma$ is the trivial forcing notion'' otherwise. The following lemma is known.
\begin{lemma}
Let $G^2$ be $\PP^2$-generic over $\langle V, \in ,G^1 \rangle$. Then

$(a)$ $\langle V[G^2], \in, V, G^1 \rangle \models$``$ZFC(U_1, U_2)+GCH+$the global axiom of choice'',

$(b)$ The forcing $\PP^1$ is weakly homogeneous, in particular $HOD(V[G^2], \in,  G^1, V)=V.$
\end{lemma}
It also follows that in $\langle V[G^2], \in,  G^1, V \rangle $ there exists a class $K$ of ordinals which is $\mathfrak{L}_{\in}(U_1,U_2)$-definable such that $V=L[K]$. So
\[
V=L[K]=HOD(V[G^2], \in, G^1, V).
\]

{\bf Step 3.}
Now force over $V[G^2]$ by the forcing notion $\PP^3$, which is defined as the Easton support product of forcing notions $\PP^3_\alpha,$ where for each ordinal $\alpha,$
\begin{center}
 $\PP^3_\alpha=$ $\left\{
\begin{array}{l}
        Add(\diamondsuit^*_{\aleph_{\alpha+1}}) \hspace{2.4cm} \text{ if } \alpha \in K,\\
        Add(\aleph_{\alpha+1}, \aleph_{\alpha+2}) \hspace{1.65cm} \text{ if } \alpha \notin K.
     \end{array} \right.$
\end{center}
Let $G^3$ be $\PP^3$-generic over $\langle V[G^2], \in, G^1, V \rangle $, and let $W=V[G^2][G^3].$
In $W$, the class $K$ is $\mathfrak{L}_{\in}$-definable, and so
\[
V= L[K] \subseteq HOD^{W}.
\]
On the other hand, as the forcing $\PP^3$ is also weakly homogeneous, using Lemma 4.3, we have
\[
HOD^{W} \subseteq HOD(V[G^2][G^3], \in, G^1, V, V[G^2]) = HOD(V[G^2], \in, G^1, V)= V.
\]
It follows that $HOD^W=V,$ and the theorem follows.
\end{proof}
We are now ready to complete the proof of Theorem 1.3.

\emph{Proof of Theorem 1.3.} Suppose that $V\models$``$ZFC+GCH+ \kappa$ is a $(\kappa+4)$-strong cardinal''. By the results of section 3, there exists a generic extension $V[G_1]$ of $V$ such that
\begin{enumerate}
\item $\kappa$ remains an inaccessible cardinal in $V[G_1],$
\item $V[G_1] \models$``$\forall \lambda, 2^\lambda=\lambda^{+3}$''.
\end{enumerate}
The proof of Theorem 4.1 gives a generic extension $V[G_1][G_2]$ of $V[G_1]$ such that
\begin{enumerate}
\item [(3)] $\kappa$ remains an inaccessible cardinal in $V[G_1][G_2],$
\item [(4)] $V[G_1][G_2] \models$``$GCH$'',
\item [(5)] $V[G_1]_\kappa=HOD^{V[G_1][G_2]_{\kappa}}.$
\end{enumerate}
So it suffices to take $W=V[G_1][G_2].$ \hfill$\Box$

\section{Some generalizations and open problems}
In this section we consider some possible generalizations of our results and pose some questions. As  the proof of Theorem 1.1 shows, the models $W$
and $HOD^W$ (and hence $W_\kappa$ and $HOD^{W_\kappa}$) have different cardinals. So it is natural to ask if these models can have the same cardinals.
The next theorem gives a positive answer to this question.
\begin{theorem}
Assume $V \models$``$ZFC+GCH+$there exists a $(\kappa+4)-$strong cardinal $\kappa$''. Then there is a cardinal preserving generic extension $W$ of $V$ such that:

$(1)$ $\kappa$ remains inaccessible in $W$,

$(2)$ $V_\kappa^W=W_\kappa \models$``$~ZFC+ \forall \lambda, 2^{\lambda} > \lambda^+$''.

$(3)$ $HOD^{W_\kappa} \models$``$GCH$'',

$(4)$ The models $W_\kappa$ and $HOD^{W_\kappa}$ have the same cardinals.
\end{theorem}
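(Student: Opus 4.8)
The plan is to repeat the construction of Section~3 with a \emph{cardinal preserving} variant of the extender based Radin forcing, and to take $W$ to be the resulting extension with no terminal collapse. In Theorem~1.1 the cardinals of $W_\kappa$ and of $HOD^{W_\kappa}$ differ for two reasons: the collapsing factors built into the guiding generics (namely $\mathbb{R}_U^{\Col}=\Col(\kappa^{+6},i_U(\kappa))_{N^*}$ and its analogues, which on the generic side collapse every cardinal strictly between $\mu^{+6}$ and the next point of $C_H^\kappa$), and the terminal collapse $K=\Col(\omega,\kappa_0^+)$. Both thin out the cardinals of $W_\kappa$, whereas $HOD^{W_\kappa}$ remains trapped between $V^*_\kappa$ and $V[H^*]_\kappa$, which keep all cardinals of $V$. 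Hence the first move is to remove every collapsing factor from the construction.

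Concretely, I would redefine the guiding forcings $\mathbb{R}_U,\mathbb{R}_\tau,\mathbb{R}_{\bar E}$ of Definition~3.3 by replacing the collapse $\mathbb{R}_U^{\Col}$ with a full Easton support Cohen product $\prod_\delta\Add(\delta,\delta^{++})_{N^*}$, the product ranging over the cardinals $\delta$ of $N^*$ in the interval $(\kappa^{+6},i_U(\kappa))$, while keeping the factors $\mathbb{R}_U^{\Add,i}$ so that the powers of $\kappa^{+},\dots,\kappa^{+6}$ are still pushed up. Using $GCH$ and the relevant closure of $N^*$ exactly as in Lemma~3.6, one builds guiding generics $I_U,I_\tau,I_{\bar E}$ fitting into the same lifting diagram; this is where the $(\kappa+4)$-strength is used again. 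The new guiding generics are cardinal preserving and force $2^\delta=\delta^{++}$ throughout the relevant intervals. Running Subsection~3.3 and the definitions of $\mathbb{P}_{\bar E}$ and $\mathbb{Q}_{\bar E}$ verbatim with these generics produces a forcing $\mathbb{P}_{\bar E}$ that no longer collapses cardinals between consecutive points of the Radin club. Crucially, $\mathbb{Q}_{\bar E}$ and the projection $\Phi$ do not involve the functions $f,F$ and are literally unchanged, so Lemma~3.28, Theorem~3.29 and the Prikry property (Lemma~3.32) persist; and since the modified $R(-,-)$ is still a product of weakly homogeneous Cohen forcings, the homogeneity Theorem~3.40, and hence Corollary~3.41 giving $HOD^{V[H]}\subseteq V[H^*]$, also go through.

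Let $H$ be $\mathbb{P}_{\bar E}$-generic over $V$ and set $W=V[H]_\kappa$, with no further collapse. By the (modified) analogue of Theorem~3.17, $\kappa$ stays inaccessible and all cardinals are preserved, so $W$ is cardinal preserving over $V$ and $W_\kappa\models ZFC$; moreover at each singular point $\mu$ of $C_H^\kappa$ the extender part reflects $2^\kappa>\kappa^+$ down to give $2^\mu>\mu^+$, while at every other cardinal $\delta<\kappa$ the reflected Cohen blow up gives $2^\delta=\delta^{++}$, so $W_\kappa\models$``$\forall\lambda,\,2^\lambda>\lambda^+$''. For the $HOD$ computation, $H^*$, the $\mathbb{Q}_{\bar E}$-generic generated by $\Phi[H]$, satisfies $V[H^*]\models GCH$ and is cardinal preserving over $V$ by Lemma~3.36, and $HOD^{W_\kappa}\subseteq V[H^*]_\kappa$ by Corollary~3.41. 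Since $V^*$ is the core model we also have $V^*_\kappa\subseteq HOD^{W_\kappa}$, so $V^*_\kappa\subseteq HOD^{W_\kappa}\subseteq V[H^*]_\kappa$; as the two ends have the same cardinals and satisfy $GCH$, so does $HOD^{W_\kappa}$, giving (1)--(3). Finally $W_\kappa$ is cardinal preserving over $V$, $HOD^{W_\kappa}$ has the cardinals of $V^*_\kappa$, and $V$ is cardinal preserving over $V^*$; hence $W_\kappa$ and $HOD^{W_\kappa}$ have the same cardinals, which is (4).

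The main obstacle is the cardinal preserving analogue of Theorem~3.17: one must verify that the modified extender Radin forcing preserves \emph{all} cardinals below $\kappa$, in particular that the now genuinely long intervals between successive points of $C_H^\kappa$ are left intact, and that $GCH$ fails at the \emph{singular} club points, which is precisely the delicate behaviour for which the full $(\kappa+4)$-strength is needed. I expect cardinal preservation below $\kappa$ to follow from the Prikry property together with the splitting and factorization arguments of Lemma~3.33 and Lemma~3.36(3), now applied to the non-collapsing forcing, while the failure of $GCH$ at the singular points follows as in Merimovich's analysis once the guiding generics are arranged to blow up the corresponding powers.
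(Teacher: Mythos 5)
Your overall strategy---remove the collapsing components of the guiding generics so that $\mathbb{P}_{\bar E}$ becomes cardinal preserving, and rerun the $\mathbb{Q}_{\bar E}$/homogeneity analysis unchanged---is exactly the paper's strategy. One remark before the main issue: your replacement of $\mathbb{R}_U^{\Col}$ by an Easton product $\prod_\delta \Add(\delta,\delta^{++})_{N^*}$ over the interval $(\kappa^{+6},i_U(\kappa))$ is unnecessary. The paper simply deletes $\mathbb{R}_U^{\Col}$ and keeps $\mathbb{R}_U^{\Add}$: since the surviving Cohen factors already force $2^{\alpha^{+4}}=(\alpha^*)^+$, $2^{\alpha^{+5}}=(\alpha^*)^{++}$, $2^{\alpha^{+6}}=(\alpha^*)^{+3}$ between successive club points $\alpha<\alpha^*$, monotonicity of $\delta\mapsto 2^\delta$ gives $2^\delta\ge(\alpha^*)^{+3}>\delta^+$ for every cardinal $\delta$ in $(\alpha^{+6},\alpha^*)$, so no new forcing is needed there, and you avoid having to re-verify the existence of guiding generics for a larger forcing.

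The genuine gap is your decision to take $W=V[H]$ ``with no further collapse.'' The failure of $GCH$ produced by $\mathbb{P}_{\bar E}$ only takes hold from the first point $\lambda=\min(C_H^\kappa)$ of the Radin club upward: below $\lambda$ the forcing adds essentially nothing (indeed, by clause $(9.2)$ of the definition of $\mathbb{P}^*_{\bar E}$, $f(\nu_1)=\emptyset$ when $p^0=\emptyset$, so there is no interpolating forcing below the minimal club point), and $V=V^*[G]$ satisfies $GCH$ below $\kappa$. Hence in your $W$ one still has $2^{\aleph_0}=\aleph_1$, and conclusion $(2)$ fails at $\omega$ and at every cardinal below $\lambda$. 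In Theorem 1.1 this bottom segment was handled by the terminal collapse $\Col(\omega,\kappa_0^+)$, which you correctly discard because it destroys cardinals; but you must replace it with something. The paper forces at the end with $\Add(\omega,\lambda^{+3})$, which is c.c.c.\ (so cardinal preserving) and weakly homogeneous (so it does not enlarge $HOD$), and which pushes $2^{\aleph_0}$ up to $\lambda^{+3}$, whence $2^\delta>\delta^+$ for all $\delta\le\lambda^+$; the cardinals from $\lambda^{+}$ on are already handled by the main forcing. With that single additional step your argument for $(1)$--$(4)$ goes through as you describe.
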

\begin{proof}
The proof is similar to the proof of Theorem 1.1, with few changes.   In our preparation model, we remove the guiding forcing notion $\MRB_U^{\Col}$ from the definition
of $\MRB_U.$ Then the forcing notion $\PP_{\bar{E}}$ is defined as before using a guiding generic for this new forcing notion.  The resulting model $V[H]$, where $H$ is $\PP_{\bar{E}}$-generic over $V$, satisfies the following properties:
\begin{enumerate}
\item $V$ and $V[H]$ have the same cardinals,
\item $\kappa$ remains an inaccessible cardinal in  $V[H],$
\item $C^\kappa_H$ is a club of $\kappa,$
\item If $\alpha < \alpha^*$ are two successive points in $C^\kappa_H,$ then $2^{\alpha^{+}}=\alpha^{+4}, 2^{\alpha^{++}}=\alpha^{+5}, 2^{\alpha^{+3}}=\alpha^{+6},$
$2^{\alpha^{+4}}=(\alpha^*)^{+}, 2^{\alpha^{+5}}=(\alpha^*)^{++}$ and $2^{\alpha^{+6}}=(\alpha^*)^{+3}$.
\end{enumerate}
Now force over $V[H]$ using $Add(\omega, \lambda^{+3}),$ where $\lambda=\min(C^\kappa_H)$, and let $K$ be the resulting generic extension. It is clear that $VH][K]$
is a cardinal preserving extension of $V$, and
\[
V[H][K]\models\text{~``}\kappa \text{~is an inaccessible cardinal~}+\forall \lambda<\kappa, 2^\lambda > \lambda^+ \text{~''}.
\]
The forcing notion $\mathbb{Q}_{\bar{E}}$ is defined as before, and the rest of the arguments from section 3 work without any change.
\end{proof}
We do not know the answer to the following question.
\begin{question}
Can there be a model $V$ of $ZFC$ such that
\begin{enumerate}
\item $V \models$``$~\forall \kappa, 2^\kappa=\kappa^{+3}$'',
\item $HOD^V \models$``$GCH$'',
\item $V$ and $HOD^V$ have the same cofinalities.
\end{enumerate}
\end{question}


\subsection*{Acknowledgements}
The author thanks Sy Friedman for his nice question and his interest in the results of this paper. 
He also thanks the referee of the paper for many helpful comments and corrections.

School of Mathematics, Institute for Research in Fundamental Sciences (IPM), P.O. Box:
19395-5746, Tehran-Iran.

E-mail address: golshani.m@gmail.com

\end{document}